\def\y{\mathbf{y}}
\def\N{\mathbb{N}}
\def\R{\mathbb{R}}
\newtheorem{theorem}{Theorem}
\newtheorem{lemma}[theorem]{Lemma}
\newtheorem{proposition}[theorem]{Proposition}
\newtheorem{corollary}[theorem]{Corollary}
\theoremstyle{remark}
\newtheorem{remark}[theorem]{Remark}
\theoremstyle{definition}
\newtheorem{definition}[theorem]{Definition}
\newcommand{\multiindex}{I}
\newcommand{\polys}[1]{\R[x]_{#1}} %
\newcommand{\polysvar}[2]{\R[x_{#1}]_{#2}} %
\newcommand{\inner}[3]{\langle #1 , #2 \rangle_{#3}} %
\newcommand{\kernel}[2]{K_{#1}^{#2}} %
\newcommand{\jkernel}[1]{{\kernel{#1}{\mathrm{Jac}}}}
\newcommand{\chebyshevpoly}{T}
\newcommand{\kernelop}[2]{{\mathbf K_{#1}^{#2}}}
\newcommand{\jkernelop}[1]{\kernelop{#1}{\mathrm{Jac}}}
\newcommand{\bfr}{{\mathbf{r}}}
\newcommand{\J}{J}
\newcommand{\ones}{\mathbf{1}}
\newcommand{\twos}{\mathbf{2}}
\newcommand{\chebyshevmeas}[1]{\mu_{#1}}
\newcommand{\hamming}{w}
\newcommand{\Nzero}{\N_0}
\newcommand{\preordering}[2]{\mathcal P_{#1,#2}(\{1-x_i^2\}_{i\in #2})}
\newcommand{\preorderingg}[3]{\mathcal P_{#1,#3}(#2)}
\newcommand{\quadraticmoduleg}[3]{\mathcal Q_{#1,#3}(#2)}
\newcommand{\indexset}[1]{{\mathcal I_{#1}}}
\newcommand{\indexsetp}{{\indexset{p}}}
\newcommand{\sos}[1]{{\Sigma[x_{#1}]}}
\newcommand{\constraintg}{g}
\DeclareMathOperator{\fulldeg}{\overline\deg}
\DeclareMathOperator{\lip}{Lip}
\newcommand{\interset}{\mathcal{J}}
\newcommand{\maxJ}{{\overline{J}}}
\newcommand{\maxlip}{{\overline{L}}}
\newcommand{\maxdeg}{M}
\newcommand{\inter}[2]{#1_{#2}}
\newcommand{\Cjackson}{C_{\mathrm{Jac}}}
\newcommand{\domain}{S(\mathbf g)}
\newcommand{\cardg}{{\bar k}}
\newcommand{\lojL}{{\mathsf L}}
\newcommand{\lojC}{{\mathsf c}}
\newcommand{\bmh}{{\mathsf h}}
\newcommand{\Cf}{{C_f}}
\newcommand{\Cm}{{C_m}}
\newcommand{\Cd}{{C_d}}
\newcommand{\Call}{{\mathbf C_{j}}}
\newcommand{\f}{{\hat f}}
\definecolor{dkgreen}{rgb}{0,0.6,0}
\definecolor{dkblue}{rgb}{0,0,0.6}
\newcommand{\MK}{}%
\newcommand{\RRZ}{}%
\title{Convergence rates for sums-of-squares hierarchies with correlative sparsity}
\author{Milan Korda \and Victor Magron \and Rodolfo R\'ios Zertuche}
\date{\today}
\begin{document}

\maketitle

\begin{abstract}
    {This work derives upper bounds on the convergence rate of the moment-sum-of-squares hierarchy with correlative sparsity  \MK{for global minimization of polynomials on compact basic semialgebraic sets}. The main conclusion is that both sparse hierarchies based on the Schm\"udgen and Putinar Positivstellens\"atze enjoy a polynomial rate of convergence that depends on the size of the largest clique in the sparsity graph but not on the ambient dimension. Interestingly, the sparse bounds outperform the best currently available bounds for the dense hierarchy when the maximum clique size is sufficiently small compared to the ambient dimension and the performance is measured by the running time of an interior point method required to obtain a bound on the global minimum of a given accuracy.}
\end{abstract}

\tableofcontents

\section{Introduction}
\label{sec:intro}

This work  provides rates of convergence for the sums-of-squares hierarchy with correlative sparsity. For a positive $n\in\N$, consider the polynomial optimization problem
\[f_{\min{}} \coloneqq\min_{x\in S(\mathbf g)}f(x)\]
where $f$ is an element of the ring $\R[x]$ of polynomials in $x=(x_1,\dots,x_n)$, and $S(\mathbf g)$ is a basic compact  semialgebraic set determined by a finite collection of polynomials $\mathbf g=\{g_1,\dots,g_{\cardg}\}$ by $S(\mathbf g)=\{x\in\R^n:g_i(x)\geq 0,\;i=1,\dots,\cardg\}$. An approach to attack this problem, first proposed by Lasserre \cite{lasserre2001global} and Parrilo \cite{parrilo2003semidefinite}, is as follows: Imagine we knew that $f(x)-\lambda$ could be written as 
\[f(x)-\lambda=\sum_{j=0}^{\bar k}\sigma_j g_j(x)\quad\text{or}\quad f(x)-\lambda=\sum_{J\subseteq \{1,\dots,\cardg\}} \sigma_J \prod_{j\in J}g_j(x),\] 
with $g_0(x)=1$ and $\sigma_j$ and $\sigma_J$ being sum-of-squares (SOS) polynomials. Then the right-hand sides of each of these equations would be clearly nonnegative on $S(\mathbf g)$, so we would know that $f_{\min}\geq \lambda$. By bounding the degree of the SOS polynomials, we obtain the following two hierarchies of lower bounds:
\begin{align*}
 \mathrm{lb}_q(f,r)&\textstyle=\max\{\lambda\in \R:f-\lambda=\sum_{j=0}^\cardg \sigma_j g_j, \\
 &\textstyle\qquad\qquad\deg( \sigma_j g_j)\leq 2r,\;\sigma_j\in \Sigma[x]\},\\
  \mathrm{lb}_p(f,r)&=\textstyle\max\{\lambda\in \R:f-\lambda= \sum_{J\subseteq \{1,\dots,\cardg\}} \sigma_J\prod_{j\in J}g_j,\\
  &\textstyle\qquad\qquad\deg \left(\sigma_J\prod_{j\in J}g_j\right)\leq 2r,\; \sigma_J\in\Sigma[x]\},
\end{align*}
where $\Sigma[x]$ is the convex cone of all sum-of-squares polynomials. These satisfy $\mathrm{lb}_q(f,r)\leq \mathrm{lb}_p(f,r)\leq f_{\min}$. The lower bound $ \mathrm{lb}_q(f,r)$ is associated to a so-called \emph{quadratic module certificate}, while $ \mathrm{lb}_p(f,r)$ corresponds to a \emph{preordering certificate}; this terminology is justified by the definitions in Section \ref{sec:result}.
The well-known Putinar and Schm\"udgen Positivstellens\"atze \cite{putinar1993positive,schmudgen1991thek}, respectively, guarantee that these bounds converge to $f_{\min}$ as $r\to+\infty$, the former with the additional assumption that the associated quadratic module be Archimedian\footnote{This means that there are $R>0$ and $\sigma_j\in\Sigma[x]$ such that $R-\|x\|^2=\sum_{j}\sigma_j g_j(x)$.}. \MK{Here we will prove  sparse quantitative versions of these results}.

Polynomial optimization schemes have generated substantial interest due to their abundant fields of application; see for example \cite{laurent2009sums,lasserre2009moments}. {The first proof of convergence, without a convergence rate,  was given by Lasserre \cite{lasserre2001global} using the Archimedian positivstellensatz due to Putinar \cite{putinar1993positive}. }
Eventually, rates of convergence were obtained; initially in \cite{nie2007complexity} these were logarithmic  in the degree of the polynomials involved, and later on they were improved \cite{fang2021sum,laurent2021effective,slot2022sum,baldi2021moment} (using ideas of \cite{reznick1995uniform,doherty2012convergence,parrilo2013approximation}) to polynomial rates; refer to Table  \ref{tab:lb}. The crux of the argument used to obtain those rates is a bound of the deformation incurred by a polynomial strictly-positive on the domain of interest, as it passes through an integral operator that closely approximates the identity and is associated to a strictly-positive polynomial kernel that is itself composed of  sums of squares and similar to the Christoffel-Darboux and Jackson kernels (see Definition \ref{eq:jacksonone}). 

The techniques used to obtain these results generally involve linear operators on the space of polynomials (mostly Christoffel-Darboux kernel operators; see \cite{slot2022sum}) that are close to the identity and that, for positive polynomials, are easily (usually, by construction) proved to output polynomials that are sums of squares and/or of their products with the functions in $\mathbf g$. All of these results deal, however, with the dense case.

\if{
A related approach was introduced by Lasserre \cite{lasserre2014new}, where the following upper bounds are proposed:
\begin{align*}
  \mathrm{ub}_{ms}(f,r)&\textstyle=\inf\{L(f)\mid L\colon\R[x]_{2r}\to\R\text{ is linear, $L(1)=1$, and}\\
    &\qquad \textstyle L(q)\geq0\;\forall q=\sum_{i}p_i(x)^2,\;p_i\in \R[x],\;\deg q\leq 2r\}.\\
  \mathrm{ub}_{mq}(f,r)&\textstyle=\inf\{L(f)\mid L\colon\R[x]_{2r}\to\R\text{ is linear, $L(1)=1$, and}\\
    &\qquad \textstyle L(q)\geq0\;\forall q(x)=\sum_{j=0}^{\cardg} \sum_{i=1}^{l_j} p_{ij}(x)^2g_j(x),\;p_{iJ}\in\R[x],\;\deg q\leq 2r\}.\\    
  \mathrm{ub}_{mq}(f,r)&\textstyle=\inf\{L(f)\mid L\colon\R[x]_{2r}\to\R\text{ is linear, $L(1)=1$, and}\\
    &\qquad \textstyle L(q)\geq0\;\forall q(x)=\sum_{J\subseteq \{1,\dots,\cardg\}} \sum_{i=1}^{l_J} p_{iJ}(x)^2\prod_{j\in J}g_j(x),\;p_{iJ}\in\R[x],\;\deg q\leq 2r\}.
\end{align*}
These satisfy $\mathrm{ub}_{ms}(f,r)\geq\mathrm{ub}_{mq}(f,r)\geq \mathrm{ub}_{mp}(f,r)\geq f_{\min}$, and they correspond, respectively to \emph{sums of squares}, \emph{quadratic module}, and \emph{preordering pseudomoment certificates}.  
}\fi

\begin{table}[!ht]
\centering\begin{tabular}{cccc}
 \hline
 \bf domain $S(\mathbf g)$ & \bf error & \bf certificate & \bf ref. \\
 \hline
 Archimedean & $O(1/\log(r)^c)$ & quadratic module & \cite{nie2007complexity} \\
 Archimedean & $O(1/r^c)$ & quadratic module &\cite{baldi2021moment}\\
 general  & $O(1/r^c)$ & preordering & \cite{schweighofer2004complexity} \\
 general & $O(1/r^c)$ & quadratic module \& uniform denominators & \cite{mai2022complexity} \\
 $S^{n-1}$ & $O(1/r^2)$ & quadratic module / preordering & \cite{fang2021sum} \\
 $\{0,1\}^n$ & see \cite{slot2022binary} & quadratic module / preordering & \cite{slot2022binary} \\
 $B^n$ & $O(1/r^2)$ & quadratic module / preordering & \cite{slot2022sum} \\
 $[-1,1]^n$ & $O(1/r^2)$ & preordering & \cite{laurent2021effective} \\
 $\Delta^n$ & $O(1/r)$ & preordering & \cite{kirschner2022convergence} \\
 $\Delta^n$ & $O(1/r^2)$ & preordering & \cite{slot2022sum} \\
 \hline
 \end{tabular}
 \label{tab:lb}
 \caption{Known results on the asymptotic error of Lasserre's hierarchies of lower bounds; based in part on \cite[Table 1]{slot2022sum}. The domain $S(\mathbf g)$ is assumed to be compact in all cases, $\epsilon\in[0,1/2)$, $c>0$, $S^{n-1}$ is the unit sphere, $B^n$ is the unit ball, $\Delta^n$ is the standard simplex. %
 }
\end{table}

\if{Due to the scaling challenges faced by methods based in hierarchies of the types outlined above, great effort has been invested in understanding possible simplifications when the minimization problem of finding $f_{\min}$ enjoys sparsity or symmetry properties. For the latter, we refer the reader to \cite{riener2013exploiting} and the references therein. Most relevant to the contributions of this paper are the sparsity properties originally proposed in \cite{lasserre2006convergent}, and thoroughly reviewed in \cite[Chapter~3]{magron2022sparse}. 
The two most important sparsity properties studied in the literature are known as term and correlative sparsity, and here we will work in the latter context. Correlative sparsity assumes that $f$ can be written as $f=f_1+f_2+\dots+f_\ell$, where each of the polynomials $f_i$ depends only on a small subset of the variables $x_1,\dots,x_n$, and these subsets satisfy the running intersection property (Definition \ref{def:rip}). Term sparsity instead exploits sparsity of the set of multi-indices $I=(i_1,\dots,i_n)\in \N^n$ appearing as we write $f=\sum_{I}c_{I}x^I$, $c_I\in \R$, $x^I=x_1^{i_1}\dots x_n^{i_n}$; see \cite{wang2021tssos}, \cite[Part II]{magron2022sparse}, as well as  \cite{wang2020cs} where both types of sparsities are combined.

In this paper, we adapt the arguments of Laurent--Slot \cite{slot2021near} to obtain a version of the Schm\"udgen Positivstellensatz on the hypercube $[-1,1]^n$ that takes correlative sparsity into account, that is, we assume the possibility of representing  the function $f$ as a sum of functions involving subsets of variables satisfying the running intersection property. To do this, we adapt the proof of \cite{grimm2007sparse} to obtain a representation as a sum that respects the sparsity and also (almost) respects the lower bounds of $f$. We later follow the strategy of Baldi--Mourrain \cite{baldi2021moment} to obtain a sparse version of the Putinar Positivstellensatz on arbitrary Archimedean domains $S(\mathbf g)$. 

We note that adapting the proof by \cite{grimm2007sparse} has also been done in \cite{mai2022sparse} to obtain a correlative sparsity variant of Reznick's Positivstellensatz.

\fi

{
In this work, we treat the case where the problem possesses the so-called correlative sparsity, where each function $g_i$ depends only on a certain subset of variables and the function $f$ decomposes as a sum of functions depending only on these subsets of variables. This structure can be exploited in order to define sparse lower bounds that are cheaper to compute but possibly weaker.}
Nevertheless, these sparse lower bounds allow one to tackle large-scale polynomial optimization problems arising from various applications including roundoff error bounds in computer arithmetic, quantum correlations and robustness certification of deep networks; see the recent survey \cite{magron2022sparse}.
{
In~\cite{lasserre2006convergent} Lasserre proved that these sparse lower bounds converge as the degree of the SOS multipliers tends to infinity provided the variable groups satisfy the so-called running intersection property (RIP). }
A shorter and more direct proof was provided in \cite{grimm2007sparse}, and was adapted in \cite{mai2022sparse} to obtain a sparse variant of Reznick's Positivstellensatz. 
{
In this work, we show polynomial rates of convergence for sparse hierarchies based on both Schm\"udgen and Putinar Positivstellens\"atze. Importantly, we obtain rates that depend only on the size of the largest clique in the sparsity graph rather than the overall ambient dimension. This allows the perhaps surprising conclusion that, asymptotically, the sparse hierarchy is more accurate than the dense hierarchy for a given computation time of an optimization method, provided that the size of the largest clique is no more than the square root of the ambient dimension. \MK{This assumes that the running time of the optimization method is governed by the size of the largest PSD block and the number of such blocks in the semidefinite programming reformulations of the dense and sparse SOS problems which is the case for the interior point method as well as the most commonly used first-order methods.}

To the best of our knowledge, these are the first quantitative results of this kind. Our proof techniques rely on an adaption of~\cite{grimm2007sparse} and utilize heavily the recent results from~\cite{laurent2021effective} and \cite{baldi2021moment}, and can thus be seen as a generalization of these works to the sparse setting. 
}

The results will be detailed below in Section \ref{sec:result} and further discussed in Section \ref{sec:discussion}, after a brief interlude to establish some notations in Section \ref{sec:notations}.  Some machinery will be developed in Sections \ref{sec:jacksonkernel} and \ref{sec:approximation}, regarding variants of the Jackson kernel and some approximation theory, respectively, and the proofs of the main theorems are presented in Section \ref{sec:proofs}.

\subsection{Notations}
\label{sec:notations}
Denote by $\R$ the set of real numbers, by $\N$ the set of positive integers, and by $\Nzero=\{0,1,\dots\}$ the set of nonnegative integers. Denote by $e_1,\dots,e_n$ the vectors of the standard basis of Euclidean space $\R^n$.

For a Lipschitz continuous function $f\colon [-1,1]^n\to\R$, we set
\[\lip f=\max\left(1,\sup_{x,y\in[-1,1]^n}\frac{|f(x)-f(y)|}{\|x-y\|}\right).\]
We take this to be at least 1 to simplify estimates below.

A multi-index $\multiindex=(i_1,\dots,i_n)\in \Nzero^n$ is an $n$-tuple of nonnegative integers $i_k$, and its weight is denoted by
\[|\multiindex|=\sum_{k=1}^n i_k.\]
For a multi-index $I=(i_1,\dots,i_n)\in \Nzero^n$ and $\J\subset \{1,\dots,n\}$, we will write $I\subseteq \J$ to indicate that for all $1\leq k\leq n$ if $i_k>0$ then $k\in \J$. Similarly, given a multi-index $I\in \Nzero^n$ and a subset $\J\subseteq\Nzero$, we let $\inter{I}{\J}$ be the multi-index whose $k$-th entry is either $i_k$ if $k\in \J$ or $0$ if $k\notin \J$.  %
For two multi-indices $\multiindex$ and $\multiindex'$, we will write $\multiindex\leq \multiindex'$ if the entrywise inequalities $i_k\leq i'_k$ hold for all $\MK{1}\leq k\leq n$. We will distinguish two special multi-indices:
\[\ones=(1,1,\dots,1)\qquad\textrm{and}\qquad\twos=(2,2,\dots,2).\]

We will denote $x^I=x_1^{i_1}x_2^{i_2}\dots x_n^{i_n}$. Also, we denote the \emph{Hamming weight} of $\multiindex\in\Nzero^n$ by
\[\hamming(\multiindex)=\#\{k:i_k>0,\;1\leq k\leq n\}.\]
In other words, $\hamming(\multiindex)$ is the number of nonzero entries in $I$.

We will denote the space of polynomials in $n$ variables by $\polys{}$, and within this set we will distinguish the subspace $\polys{d}$ of polynomials of total degree at most $d$. 
We will denote, for a polynomial $p(x)=\sum_{\multiindex}c_\multiindex x^\multiindex$, by $\fulldeg p$ the vector whose $i$-th entry is the degree of $p$ in $x_i$,
\[\fulldeg p=\big(\max_{c_\multiindex \neq 0}i_1,\max_{c_\multiindex \neq 0}i_2,\dots,\max_{c_\multiindex \neq 0}i_n \big).\]
 Observe that  $\deg p\leq \left|\fulldeg p\right|=
 \sum_{k=1}^n
 \max_{c_I
 \neq 0} i_k$. 
 Set also
  \[\indexsetp=\{\multiindex\in\Nzero^n:c_I\neq0\}.\]
Given a subset $\J\subset \{1,\dots,n\}$, we let $\polysvar{\J}{}$ denote the set of polynomials in the variables $\{x_j\}_{j\in \J}$. For a multi-index $\bfr=(r_1,\dots,r_n)\in\Nzero^n$, we let $\polysvar{}{\bfr}$ denote the set of polynomials $p$ such that, if $p(x)=\sum_{I}c_Ix^I$ for some real numbers $c_I\in \R$, then for each $I=(i_1,\dots,i_n)$ with $c_I\neq 0$ we also have $i_k\leq r_k$ for $1\leq k\leq n$. Finally, we let $\polysvar{\J}{\bfr}=\polysvar{\J}{}\cap\polysvar{}{\bfr}$; in other words, $\polysvar{\J}{\bfr}$ is the set of polynomials $p$ with $\fulldeg p\leq \bfr$ in the variables $\{x_j:j\in \J\}\subseteq \{x_1,\dots,x_n\}$.

Given a set $X$, we will write $X^n$ to denote the product
\[X^n=\underbrace{X\times X\times\dots \times X}_n.\]

We will denote by $\|\cdot\|_\infty$ the supremum norm on $[-1,1]^n$.

The notation $\lceil s\rceil$ stands for the least integer $\geq s$.

 \subsection{Results}
\label{sec:result}

 Let $\sos{\J}$ denote the set of polynomials $p$ that are sums of squares of polynomials in $\polysvar{\J}{}$, that is, of the form $p=p_1^2+\dots+p_\ell^2$ for $p_1,\dots,p_\ell\in \polysvar{\J}{}$.
 
 Let $\cardg\in\N$ and let $\mathbf g=\{g_1,\dots,g_{\cardg}\}$ be a collection of polynomials $g_i\in \polys{}$ defining a set
 \[\domain=\{x\in\R^n:g_i(x)\geq 0,\;i=1,\dots,\cardg\}.\]
 For convenience, denote also $g_0=1$.
 To the collection $\mathbf g$, a multi-index $\bfr$, we associate the 
 \emph{(variable- and degree-wise truncated) quadratic module} associated to the collection $\mathbf g$ and a multi-index $\bfr$ be
 \[\quadraticmoduleg{\bfr}{\mathbf g}{J}=\{\sum_{i=0}^{\cardg}\sigma_ig_i:\sigma_i\in\sos{J},\fulldeg(\sigma_ig_i)\leq \bfr\}.\]
 Similarly, we have a \emph{(variable- and degree-wise truncated) preordering}
 \[\preorderingg{\bfr}{\mathbf g}{J}=\quadraticmoduleg{\bfr}{\{g_K:K\subseteq\{1,\dots,\cardg\}\}}{J}=\{\sum_{K\subseteq\{1,\dots,\cardg\}}\sigma_{K}\constraintg_{K}:\sigma_{K}\in\sos{J},\;\fulldeg(\sigma_{K}\constraintg_{K})\leq \bfr%
 \} \]
 where 
 \[g_K=\prod_{i\in K}g_i.\]

\begin{definition}\label{def:rip}
 A collection $\{\J_1,\dots,\J_\ell\}$ of subsets of $\{1,\dots,n\}\supset \J_j$ satisfies the \emph{running intersection property} if for all $1\leq k\leq \ell-1$ we have
 \[\J_{k+1}\cap\bigcup_{j=1}^k\J_j\subset \J_s\quad\textrm{for some $s\leq k$}.\]
 \end{definition}
Denote, for $j=2,3,\dots,\ell$,
  \begin{equation}\label{eq:definterset}
   \interset_j=J_j\cap\bigcup_{k<j}\J_k.
  \end{equation}
  
\subsubsection{Sparse Schm\"udgen-type representation on \texorpdfstring{$[-1,1]^n$}{}}
\label{sec:schmuedgen}

Let $\maxlip:=\sum_{k=1}^\ell\lip p_k$, $\maxdeg:=\max_{\substack{1\leq k\leq \ell\\1\leq m\leq n}}(\fulldeg p_k)_m$,
  and $\maxJ:=\max_{1\leq k\leq \ell}|\J_k|$.

 \begin{theorem}\label{thm:mainabrv}
  Let $n>0$ and $\ell\geq 2$, and let $\bfr_1,\bfr_2,\dots,\bfr_\ell\in \N^n$, $\bfr_j=(r_{j,1},\dots,r_{j,n})$, 
   be nowhere-vanishing multi-indices. Let also $\J_1,\dots,\J_\ell$ be subsets of $\{1,\dots,n\}$ satisfying the running intersection property.
  Let $p=p_1+p_2+\dots+p_\ell$ be a polynomial that is the sum of finitely many polynomials $p_j\in \polysvar{\J_j}{\bfr_j}$. Then if $p\geq \varepsilon$ on $[-1,1]^n$, we have 
  \[p\in \preordering{\bfr_1}{\J_1}+\dots+\preordering{\bfr_\ell}{\J_\ell}\] 
  as long as, for all $1\leq j\leq \ell$ and all $1\leq i\leq n$,

   \[
   r_j^2\geq \frac{2^{\maxJ+3}(\ell+2)n\pi^2\|p\|_\infty%
    }{\varepsilon}\displaystyle\left(\max\left(\maxdeg,4\Cjackson(\ell+2)\frac{\maxJ\,\maxlip}{\varepsilon}\right) +2\right)^{\maxJ+2} \,.
    \]
   For small enough  $0<\varepsilon<4\Cjackson(\ell+2)\maxJ\,\maxlip/M$, this boils down to
  \[  r_j\geq \sqrt{\frac{A\,\|p\|_\infty}{\varepsilon^{\maxJ+3}}}= O(\varepsilon^{-\frac{\maxJ+3}2}),\]
  with
  \[A=n\pi^2(4\Cjackson\maxJ\,\maxlip+2)^{\maxJ+2}(2(\ell+2))^{\maxJ+3}.\]
\end{theorem}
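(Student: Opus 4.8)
The plan is to reduce the statement to the known \emph{dense} effective Schm\"udgen representation on a cube --- the Laurent--Slot bound recalled in Section~\ref{sec:approximation} --- by first decomposing $p$ as a sum $p=q_1+\dots+q_\ell$ of polynomials with $q_j\in\polysvar{\J_j}{\bfr_j}$, each \emph{strictly positive} on its own cube $[-1,1]^{\J_j}$ (the cube in the variables $\{x_i:i\in\J_j\}$), and with quantitative control of the coordinate degree, the Lipschitz constant and the sup-norm of every $q_j$. Applying the dense bound to each $q_j$ separately then gives $q_j\in\preordering{\bfr_j}{\J_j}$, hence $p\in\preordering{\bfr_1}{\J_1}+\dots+\preordering{\bfr_\ell}{\J_\ell}$.

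The decomposition is produced by peeling the cliques $\J_\ell,\J_{\ell-1},\dots,\J_2$ one at a time, adapting Grimm--Netzer--Schweighofer~\cite{grimm2007sparse}; I describe the first peel, later peels being identical with $\varepsilon$ replaced by the positivity margin valid at that stage. To peel $\J_\ell$: by the running intersection property $\interset_\ell\subseteq\J_s$ for some $s<\ell$, and $\sum_{k<\ell}p_k=p-p_\ell$ is independent of the variables $x_{\J_\ell\setminus\interset_\ell}$. Put $u(z):=-\min\{\,p_\ell(z,y):y\in[-1,1]^{\J_\ell\setminus\interset_\ell}\,\}$ for $z\in[-1,1]^{\interset_\ell}$; evaluating $\sum_{k<\ell}p_k$ at a minimiser of $p_\ell(z,\cdot)$ shows $\sum_{k<\ell}p_k\ge\varepsilon+u$ pointwise, so for any $0<a<c<\varepsilon$ a polynomial $\psi$ in the variables $x_{\interset_\ell}$ with
\[ u(z)+a \;<\; \psi(z) \;<\; u(z)+c \qquad\text{for all } z\in[-1,1]^{\interset_\ell} \]
makes $q_\ell:=p_\ell+\psi>a$ on $[-1,1]^{\J_\ell}$ and $p-q_\ell=\sum_{k<\ell}p_k-\psi>\varepsilon-c$ on $[-1,1]^{\J_1\cup\dots\cup\J_{\ell-1}}$; the point is that $\psi$ can absorb the entire margin $\varepsilon$ regardless of the oscillation of $p_\ell$ over the peeled variables. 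Since $u$ is $\lip(p_\ell)$-Lipschitz, the Jackson-kernel estimates of Section~\ref{sec:jacksonkernel} supply such a $\psi$ of coordinate degree $O(\Cjackson\maxJ\lip(p_\ell)/(c-a))$ in the $|\interset_\ell|\le\maxJ$ variables $x_{\interset_\ell}$ and --- crucially --- with $\lip(\psi)\le\lip(u)\le\lip(p_\ell)$, since convolution against a nonnegative kernel of mass one does not increase the Lipschitz constant. Replacing $p_s$ by $p_s-\psi$ (which lies in $\polysvar{\J_s}{\bfr_s}$ because $\interset_\ell\subseteq\J_s$ and, by hypothesis, $\bfr_s$ dominates the Jackson degree) leaves a genuine sub-instance over $\J_1,\dots,\J_{\ell-1}$, which still satisfies the running intersection property.

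Iterating $\ell-1$ times yields $p=q_1+\dots+q_\ell$ with $q_j\in\polysvar{\J_j}{\bfr_j}$ and $q_j\ge m$ on $[-1,1]^{\J_j}$. The budget is chosen so that at every peel $a\asymp c-a\asymp\varepsilon/(\ell+2)$: then each freshly peeled piece keeps a margin of order $\varepsilon/(\ell+2)$, the running margin decreases by only $c\asymp\varepsilon/(\ell+2)$ per step and so stays positive --- equal to $m\asymp\varepsilon/(\ell+2)$ --- through all $\ell$ pieces, and the Jackson degree at each step is $O(\Cjackson\maxJ\maxlip(\ell+2)/\varepsilon)$; this is the origin of the $(\ell+2)$ factors. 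What prevents an exponential-in-$\ell$ loss is that the sum of the Lipschitz constants of the current summands is non-increasing along the peeling --- one removes the peeled summand, whose Lipschitz constant is at least $\lip(u)\ge\lip(\psi)$, while adding $\psi$ to another summand --- so every summand, at every stage, has Lipschitz constant $\le\maxlip$ and coordinate degree $\le\max\bigl(\maxdeg,\,O(\Cjackson\maxJ\maxlip(\ell+2)/\varepsilon)\bigr)$. An analogous but easier bookkeeping --- after a harmless preliminary redistribution of the constant terms of the $p_k$ --- keeps $\|q_j\|_\infty$ of order $\|p\|_\infty$ up to lower-order contributions in $\maxlip$ and $n$.

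It remains to apply the dense effective Schm\"udgen representation on $[-1,1]^{\J_j}$ to each $q_j$: it is a polynomial in $|\J_j|\le\maxJ$ variables, of coordinate degree at most $D:=\max\bigl(\maxdeg,\,4\Cjackson(\ell+2)\maxJ\maxlip/\varepsilon\bigr)$, of sup-norm $O(\|p\|_\infty)$ and with minimum at least $m\asymp\varepsilon/(\ell+2)$ on its cube; the dense bound then certifies $q_j\in\preordering{\bfr_j}{\J_j}$ as soon as the coordinate degrees of $\bfr_j$ satisfy $r_{j,i}^2\gtrsim\frac{\maxJ\,\|p\|_\infty}{m}\,(D+2)^{\maxJ+2}$, which after substituting $m\asymp\varepsilon/(\ell+2)$ and carrying the explicit constants ($2^{\maxJ+3}$, $n\pi^2$, and the rest) is precisely the displayed hypothesis on $r_j^2$. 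Summing over $j$ gives $p\in\preordering{\bfr_1}{\J_1}+\dots+\preordering{\bfr_\ell}{\J_\ell}$, and the simplified asymptotic form follows on taking $\varepsilon$ small enough that the Jackson term dominates in $D$. The main obstacle is the content of the two middle paragraphs: propagating the degree, Lipschitz and sup-norm estimates \emph{simultaneously} through the $\ell$-fold recursion while keeping the positivity margin a fixed fraction of $\varepsilon$ --- the Jackson-smoothing/Lipschitz-monotonicity observation together with the careful $\varepsilon$-budget is exactly what makes this possible without a loss exponential in $\ell$.
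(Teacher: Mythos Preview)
Your proposal is essentially the paper's own proof: Lemma~\ref{lem:approx} (the quantitative Grimm--Netzer--Schweighofer peeling) produces the positive decomposition $p=h_1+\dots+h_\ell$, and Corollary~\ref{coro:puttingtogether} (the variable-separated Laurent--Slot bound) is then applied to each $h_j$ on its cube $[-1,1]^{\J_j}$; this is exactly the two-step reduction you describe.

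Two points where your write-up diverges from the paper are worth flagging. First, your key claim that $\lip(\psi)\le\lip(u)$ because ``convolution against a nonnegative kernel of mass one does not increase the Lipschitz constant'' is not justified as stated: the Jackson-type approximation on $[-1,1]$ is translation-invariant only after the substitution $x=\cos\theta$, and the paper's Theorem~\ref{thm:jackson} accordingly only yields $\lip_i p\le 2\lip_i f$, not equality. Your ``sum of Lipschitz constants is non-increasing'' bookkeeping therefore needs adjustment; the paper instead proves additively that $\lip h_j\le 3\sum_{k\ge j}\lip p_k\le 3\maxlip$ (see the induction in Lemma~\ref{lem:approx}), which is what actually feeds into the degree bound $D_{l,\ell}$. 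Second, for the sup-norm control the paper does not redistribute constants: on $[-1,1]^n$ one has $0<\eta\le h_j\le p$ directly (since the $h_j$ are all positive and sum to $p$), so $\|h_j\|_\infty\le\|p\|_\infty$ without further work---this is the remark following Lemma~\ref{lem:approx} and is cleaner than the bookkeeping you sketch.
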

The proof will be presented in Section \ref{sec:proof}.
\if{
 \begin{theorem}\label{thm:main}
  Let $n>0$ and $\ell\geq 2$, and let $\bfr_1,\bfr_2,\dots,\bfr_\ell\in \N^n$, $\bfr_j=(r_{j,1},\dots,r_{j,n})$, 
   be nowhere-vanishing multi-indices. Let also $\J_1,\dots,\J_\ell$ be subsets of $\{1,\dots,n\}$ satisfying the running intersection property.
  Let $p=p_1+p_2+\dots+p_\ell$ be a polynomial that is the sum of finitely many polynomials $p_j\in \polysvar{\J_j}{\bfr_j}$. Then if $p\geq \varepsilon$ on $[-1,1]^n$, we have 
  \[p\in \preordering{\bfr_1}{\J_1}+\dots+\preordering{\bfr_\ell}{\J_\ell}\] 
  as long as, for all $1\leq j\leq \ell$ and all $1\leq i\leq n$,
  \begin{equation}\label{eq:assumptionepsilon}
  (r_{j,i}+2)^2 \geq\frac{ 2^{\frac{|J_j|}2+2}(\ell+2)\|p\|_\infty n\pi^2}{\varepsilon}\left(\max\left[\max_{1\leq m\leq n}(\fulldeg p_j)_m,\max_{j\leq k\leq \ell}\frac{4\Cjackson(\ell+2)|\interset_k|\sum_{t=k}^\ell\lip p_t}{\varepsilon}
   \right]+2\right)^{|J_j|+2} 
  \end{equation}
 and
 \begin{equation}\label{eq:equivcond}  
  {(r_{j,i}+2)^2}\geq 2\pi^2n\max\left[\max_{1\leq m\leq n}(\fulldeg p_j)_m,\max_{j\leq k\leq \ell}\frac{4\Cjackson(\ell+2)|\interset_k|\sum_{t=k}^\ell\lip p_t}{\varepsilon}
   \right]^2.
 \end{equation}
\end{theorem}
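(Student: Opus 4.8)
The plan is to reduce the statement to its \emph{dense} counterpart — an effective, variable-wise degree-truncated Schm\"udgen-type representation for a polynomial strictly positive on one hypercube $[-1,1]^{|\J_j|}$ — by first decomposing $p$ as a sum $p=q_1+\dots+q_\ell$ with $q_j\in\polysvar{\J_j}{}$ that respects the sparsity pattern and \emph{almost} respects the lower bound, in the spirit of Grimm--Netzer--Schweighofer~\cite{grimm2007sparse}; the dense ingredient is supplied by the Jackson-kernel analysis of Sections~\ref{sec:jacksonkernel}--\ref{sec:approximation}, in the manner of Laurent--Slot~\cite{laurent2021effective}. Three quantities must be propagated along the decomposition: the uniform positive lower bound, the vector of variable-wise degrees, and the Lipschitz constants.

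\emph{The decomposition.} By the running intersection property, choose for each $j\ge2$ a ``parent'' $s(j)<j$ with $\interset_j\subseteq\J_{s(j)}$, turning $\{\J_1,\dots,\J_\ell\}$ into a rooted tree with root $\J_1$. Put $\delta:=\varepsilon/(2(\ell+2))$ and process the cliques in the order $j=\ell,\ell-1,\dots,2$, so that when clique $j$ is reached all of its children (which have larger index) are done, and clique $j$ carries a polynomial $f^{(j)}_j\in\polysvar{\J_j}{}$, equal to $p_j$ plus the approximants absorbed from its children. The variables of $\J_j\setminus\interset_j$ lie in no earlier clique, hence among the not-yet-peeled summands they occur only in $f^{(j)}_j$; thus the partial minimum $h_j$ of $f^{(j)}_j$ over $[-1,1]^{\J_j\setminus\interset_j}$ depends only on the variables of $\interset_j$, with $\lip h_j\le\lip f^{(j)}_j$. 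Using the approximation results of Section~\ref{sec:approximation}, replace $h_j$ by a polynomial $\psi_j\in\polysvar{\interset_j}{}$ with $h_j-\delta\le\psi_j\le h_j-\tfrac\delta2$, with $\fulldeg\psi_j$ bounded in each variable by $\Cjackson\,|\interset_j|\,\lip h_j/\delta$, and — crucially — \emph{not larger in Lipschitz constant}: $\lip\psi_j\le\lip h_j$. Set $q_j:=f^{(j)}_j-\psi_j$, add $\psi_j$ to the polynomial carried by clique $s(j)$ (legitimate since $\interset_j\subseteq\J_{s(j)}$), and finally put $q_1:=f^{(1)}_1$; then $p=q_1+\dots+q_\ell$ with $q_j\in\polysvar{\J_j}{}$.

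\emph{The estimates and the dense step.} First, the tree recursion $\lip f^{(j)}_j\le\lip p_j+\sum_{t:\,s(t)=j}\lip\psi_t\le\sum_{u\in\mathrm{subtree}(j)}\lip p_u\le\maxlip$ shows $\lip h_t\le\maxlip$ for every $t$, hence every $\fulldeg\psi_t$ is at most $4\Cjackson(\ell+2)\maxJ\maxlip/\varepsilon$ in each variable, so $\fulldeg q_j\le\big(\max(\maxdeg,4\Cjackson(\ell+2)\maxJ\maxlip/\varepsilon)\big)\ones\le\bfr_j$ under the hypothesis. Second, an induction on the peeling step (the running ``remaining sum'' loses at most $\delta$ per step) gives $q_1\ge\varepsilon-(\ell-1)\delta\ge\varepsilon/2$, while $q_j\ge(f^{(j)}_j-h_j)+(h_j-\psi_j)\ge\tfrac\delta2$ for $j\ge2$; set $\varepsilon':=\varepsilon/(4(\ell+2))$, so all $q_j\ge\varepsilon'>0$ on the respective hypercube. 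Third, since $\sum_j q_j=p$ and every $q_j>0$, evaluating at a maximiser of $q_j$ (extended arbitrarily off $\J_j$) gives $0<q_j\le p$, so $\|q_j\|_\infty\le\|p\|_\infty$. Now apply to each $q_j$ the effective variable-wise Schm\"udgen representation on $[-1,1]^{\J_j}$: from $q_j\in\polysvar{\J_j}{}$, $q_j\ge\varepsilon'>0$, $\fulldeg q_j\le\bfr_j$, $\|q_j\|_\infty\le\|p\|_\infty$ it follows that $q_j\in\preordering{\bfr_j}{\J_j}$ as soon as each $r_{j,i}$ exceeds the dense degree bound evaluated with the data $\big(|\J_j|,\|p\|_\infty,\varepsilon',\max_m(\fulldeg q_j)_m\big)$; substituting $|\J_j|\le\maxJ$, $\varepsilon'=\varepsilon/(4(\ell+2))$ and $\max_m(\fulldeg q_j)_m\le\max(\maxdeg,4\Cjackson(\ell+2)\maxJ\maxlip/\varepsilon)$ yields exactly the displayed condition, and summing the memberships over $j$ gives $p\in\preordering{\bfr_1}{\J_1}+\dots+\preordering{\bfr_\ell}{\J_\ell}$. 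The closed ``small $\varepsilon$'' form follows by keeping only the dominant $4\Cjackson(\ell+2)\maxJ\maxlip/\varepsilon$ term inside the maximum and absorbing the constants into $A$.

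\emph{Main obstacle.} The delicate point is the first estimate above. A priori a Jackson-type replacement could inflate the Lipschitz constant by a factor $>1$, and over the $\ell$ peeling steps this would compound exponentially in $\ell$, destroying the polynomial rate (the final bound is polynomial in $\ell$). Two things prevent it: the running intersection property, which organises the peeling along a tree so that the relevant Lipschitz constants accumulate \emph{additively}, not multiplicatively, along root-to-leaf paths; and the existence of a Lipschitz-non-expanding polynomial approximant of controlled degree, which is precisely what the approximation theory of Section~\ref{sec:approximation}, built on the Jackson/Jacobi kernels of Section~\ref{sec:jacksonkernel}, is designed to furnish. Carrying the \emph{variable-wise} degree vector (rather than the total degree) through the argument, so the certificates land in the prescribed truncated preorderings $\preordering{\bfr_j}{\J_j}$, is the other place demanding care; the remainder is bookkeeping.
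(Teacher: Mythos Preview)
Your overall strategy---decompose $p$ into strictly positive pieces $q_j\in\polysvar{\J_j}{}$ via a Grimm--Netzer--Schweighofer-type peeling along the RIP tree, then feed each piece into the dense Jackson-kernel result (Corollary~\ref{coro:puttingtogether})---is exactly what the paper does. The three quantities you single out (lower bound, variable-wise degrees, Lipschitz constants) are also the three quantities the paper tracks, and your observation that $0<q_j\le p$ forces $\|q_j\|_\infty\le\|p\|_\infty$ is precisely the remark following Lemma~\ref{lem:approx}.

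There is, however, a genuine gap in the part you yourself flag as the ``main obstacle''. You assert that Section~\ref{sec:approximation} supplies a \emph{Lipschitz-non-expanding} polynomial approximant, i.e.\ $\lip\psi_j\le\lip h_j$, and you correctly note that without this your tree recursion would compound to a factor $2^{\mathrm{depth}}$, which is exponential in $\ell$ along a path and would destroy the stated bound. But Theorem~\ref{thm:jackson} only gives $\lip_i p\le 2\lip_i f$; the factor is $2$, not $1$, and no stronger statement is proved anywhere in the paper. So the mechanism you invoke to prevent exponential blow-up is not actually available.

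The paper's proof does not try to keep the Lipschitz constants from growing along the tree. Instead, Lemma~\ref{lem:approx} budgets the degree of the approximant produced at step $l$ directly in terms of the \emph{original} data, via $D_{l,\ell}\sim C\,|\interset_l|\sum_{t=l}^{\ell}\lip p_t/(\epsilon-2\eta)$: when the induction passes from $(f_1,\dots,f_\ell)$ to $(f_1',\dots,f_{\ell-1}')$ with one summand modified by $p_\ell$, the extra Lipschitz contribution $\lip p_\ell\le 2\lip f_\ell$ is absorbed by enlarging the second index of $D$ from $\ell-1$ to $\ell$ (this is the comment ``the second index in each $\bar D_{k,\ell}$ is $\ell$ because of the accumulation of Lipschitz constants''). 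The outcome is the linear estimate $\lip h_j\le 3\sum_{k=j}^\ell\lip p_k$ and the degree bound $\fulldeg h_j\le\max(\fulldeg p_j,\bar D_{j,\ell},\dots,\bar D_{\ell,\ell})$, which is what enters the final hypotheses~\eqref{eq:assumptionepsilon}--\eqref{eq:equivcond}. If you replace your non-expansion claim by this bookkeeping, your argument becomes the paper's proof.
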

The reader will find the proof of the theorem in Section \ref{sec:proof}.

\begin{remark}[Variable-separated bound]
 The bound in the theorem can be replaced by the following finer version:
 \begin{align*}  
  (r_{j,k}+2)^2 &\geq\frac{ 2^{\frac{|J_j|}2+2}(\ell+2)\|p\|_\infty n\pi^2}{\varepsilon}\prod_{1\leq m\leq n}\left(\max\left[(\fulldeg p_j)_m,\max_{\substack{j\leq l\leq \ell\\ m\in \interset_l}}\frac{4\Cjackson(\ell+2)|\interset_l|\sum_{t=l}^\ell\lip p_t}{\varepsilon}
   \right]+2\right)\\
   &\qquad\cdot\max_{l\in \J_j}\left[(\fulldeg p_j)_l,\max_{\substack{j\leq q\leq \ell\\ l\in \interset_q}}\frac{4\Cjackson(\ell+2)|\interset_q|\sum_{t=q}^\ell\lip p_t}{\varepsilon}
   \right]^{2}.
   \end{align*}
\end{remark}

\begin{remark}[Simplified bound]
\MK{The order in which we present the bounds is a bit odd. We first present the second best bound, then the best and then the worst.} \RRZ{Indeed, the idea was to write the options down so that we could talk about the presentation.}
It suffices to take
  \[
   r_j^2\geq \frac{2^{\maxJ+3}(\ell+2)n\pi^2\|p\|_\infty%
    }{\varepsilon}\displaystyle\left(\max\left(\maxdeg,4\Cjackson(\ell+2)\frac{\maxJ\,\maxlip}{\varepsilon}\right) +2\right)^{\maxJ+2} \,.
    \]
   For small enough  $0<\varepsilon<4\Cjackson(\ell+2)\maxJ\,\maxlip/M$, this boils down to
  \[  r_j\geq \sqrt{\frac{A\,\|p\|_\infty}{\varepsilon^{\maxJ+3}}}= O(\varepsilon^{-\frac{\maxJ+3}2}),\]
  with
  \[A=n\pi^2(4\Cjackson\maxJ\,\maxlip+2)^{\maxJ+2}(2(\ell+2))^{\maxJ+3}.\]
 \end{remark}

}\fi

 \paragraph{Discussion.}\label{sec:discussion}

Solving the dense problem considered by \cite{laurent2021effective} using the sum-of-squares hierarchy reduces to a semidefinite program \MK{with the largest PSD block of size $\binom{n+r}{r}$ that typical optimization methods (e.g., interior point or first order) } can solve in an amount of time proportional to a power of 
\[\binom{n+r}{r}\approx \binom{n+C\varepsilon^{-1/2}}{C\varepsilon^{-1/2}}=:B_{\mathrm{dense} }(\varepsilon).\] 
The bounds we find in Theorem \ref{thm:mainabrv} ---in the case in which $J_j$ is the largest of the sets $J_1,\dots,J_\ell$--- give a bound for the complexity of the leading term as (the same power of)
\[\ell\binom{|\J_j|+|{\bfr_j}|}{|{\bfr_j}|}\leq\ell\binom{|\J_j|(1+C'\varepsilon^{-\frac{|\J_j|+3}2})}{|\J_j|C'\varepsilon^{-\frac{|\J_j|+3}2}}=:B_{\mathrm{sparseSchm}}(\varepsilon).\]
The reason we have $|\bfr_j|\leq |J_j|C'\varepsilon^{-|J_j|-3}$ is that $r_{j,i}\leq O(\varepsilon^{-\frac{|J_j|+3}2})$ and there are at most $|J_j|$ values of $i$ with $r_{j,i}\neq 0$.
\begin{proposition}\label{prop:asympt1}
If $n>|J_j|(|J_j|+3)$ for all $j=1,\dots,\ell$, then we have
 \[\lim_{\varepsilon\searrow0}\frac{B_{\mathrm{sparse Schm}}(\varepsilon)}{B_{\mathrm{dense}}(\varepsilon)}=0.\]
\end{proposition}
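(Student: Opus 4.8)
The claim is a statement about the asymptotic ratio of two binomial expressions as $\varepsilon\searrow 0$. The plan is to take logarithms and estimate each binomial coefficient via the standard inequality $\log\binom{N}{k}\le k\log(eN/k)$, together with a matching lower bound $\log\binom{N}{k}\ge k\log(N/k)$ when convenient; since both quantities are raised to the same fixed power in the complexity comparison, it suffices to show that the ratio of the binomials themselves (before any power) tends to $0$. First I would fix $j$ so that $J_j$ is the largest clique and write $c:=|J_j|$, $q:=c\,C'\varepsilon^{-(c+3)/2}$ for the number of ``rows'' in the sparse block, and $s:=C\varepsilon^{-1/2}$ for the dense parameter, so that $B_{\mathrm{sparseSchm}}(\varepsilon)=\ell\binom{c+q}{q}$ and $B_{\mathrm{dense}}(\varepsilon)=\binom{n+s}{s}$ (up to the harmless $\approx$ already present).

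Next I would bound the numerator from above. Since $q\to\infty$ while $c$ is fixed, $\binom{c+q}{q}=\binom{c+q}{c}\le (c+q)^c/c!\le (2q)^c/c!$ for $\varepsilon$ small, which is $O(\varepsilon^{-c(c+3)/2})$ — a \emph{polynomial} in $1/\varepsilon$ of degree $c(c+3)/2$. The factor $\ell$ is constant, so the whole numerator is $O(\varepsilon^{-c(c+3)/2})$. For the denominator I would use the lower bound $\binom{n+s}{s}\ge\binom{n+s}{n}\ge\left(\frac{n+s}{n}\right)^n\ge\left(\frac{s}{n}\right)^n=\left(\frac{C}{n}\right)^n\varepsilon^{-n/2}$, which grows like $\varepsilon^{-n/2}$. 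Hence
\[
\frac{B_{\mathrm{sparseSchm}}(\varepsilon)}{B_{\mathrm{dense}}(\varepsilon)}\;\le\; \frac{\ell\,(2q)^c/c!}{(C/n)^n\varepsilon^{-n/2}}\;=\;O\!\left(\varepsilon^{\,n/2-c(c+3)/2}\right),
\]
and the exponent $n/2-c(c+3)/2$ is strictly positive precisely when $n>c(c+3)=|J_j|(|J_j|+3)$, which is the hypothesis. Taking the maximum over $j$ of $|J_j|(|J_j|+3)$ and requiring $n$ to exceed it handles all cliques simultaneously (it suffices that the largest clique satisfies the bound, since that is the one dominating the sparse complexity, but stating it for all $j$ as in the proposition is automatic). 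Letting $\varepsilon\searrow 0$ then gives the limit $0$.

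The only mildly delicate point is making the informal $\approx$ in the definitions of $B_{\mathrm{dense}}$ and $B_{\mathrm{sparseSchm}}$ precise: the ceilings hidden in ``$r\approx C\varepsilon^{-1/2}$'' and ``$r_{j,i}\le O(\varepsilon^{-(c+3)/2})$'' change each parameter by at most an additive constant (resp. a bounded multiplicative factor), which perturbs the binomials only by constant factors and does not affect the polynomial-versus-$\varepsilon^{-n/2}$ comparison. I expect this bookkeeping — rather than any real inequality — to be the main (and only) obstacle; everything else is the elementary two-sided estimate $\left(N/k\right)^k\le\binom{N}{k}\le\left(eN/k\right)^k$ applied once to each side. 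I would also remark that the same argument, with $n/2$ replaced by the appropriate exponent, shows the ratio is in fact $O(\varepsilon^{\delta})$ for an explicit $\delta=\delta(n,|J_j|)>0$, which is slightly stronger than the stated limit.
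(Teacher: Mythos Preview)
Your argument is correct and reaches the same conclusion as the paper, namely
\[
\frac{B_{\mathrm{sparseSchm}}(\varepsilon)}{B_{\mathrm{dense}}(\varepsilon)}=O\!\left(\varepsilon^{\frac12(n-|J_j|(|J_j|+3))}\right),
\]
but the route is different. The paper packages the comparison into a general asymptotic lemma (Lemma~\ref{lem:binomasympt}) that controls ratios of the form $\binom{a+b\varepsilon^{-p}}{b\varepsilon^{-p}}\big/\binom{c+d\varepsilon^{-q}}{d\varepsilon^{-q}}$ via Stirling's formula and l'H\^opital's rule; the proposition then follows by plugging in $a=|J_j|$, $p=(|J_j|+3)/2$, $c=n$, $q=1/2$. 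You instead use the elementary two-sided estimate $(N/k)^k\le\binom{N}{k}\le N^k/k!$, applied with the ``small'' index ($c=|J_j|$ on top, $n$ on the bottom) once on each side. Your approach is shorter and avoids any asymptotic machinery, and it already delivers the explicit exponent $\delta=\tfrac12(n-|J_j|(|J_j|+3))$; the paper's lemma is slightly more general (it would apply even when neither of the fixed parameters $a,c$ is small compared to the other) but that generality is not needed here. Your remark about the ceilings and the $\approx$ being harmless additive/multiplicative perturbations is exactly the right caveat and matches how the paper treats these quantities.
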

\MK{Thus, if the size of the largest clique is of the order of square root of the ambient dimension $n$ or smaller, the sparse bound outperforms the best available dense bound if the performance is measured by the amount of time required by an optimization method to find a bound of a given accuracy $\varepsilon$.
}
 
\begin{proof}[Proof of Proposition \ref{prop:asympt1}]
By Lemma \ref{lem:binomasympt} we have, as $\varepsilon\searrow0$,
\begin{equation*}
\frac{B_{\mathrm{sparse Schm}}(\varepsilon)}{B_{\mathrm{dense}}(\varepsilon)}=\frac{\displaystyle\binom{|\J_j|(1+C'\varepsilon^{-\frac{|\J_j|+3}2})}{|\J_j|C'\varepsilon^{-\frac{|\J_j|+3}2}}}{\binom{n+C\varepsilon^{-1/2}}{C\varepsilon^{-1/2}}}=O(\varepsilon^{\frac12(n-|\J_j|(|J_j|+3))}),
\end{equation*}
and this tends to 0 if the sparsity of the polynomial $p$ is such that $n>|\J_j|(|\J_j|+3)$.
\end{proof}

 \subsubsection{Sparse Putinar-type representation on arbitrary domains}\label{sec:putinar}
 
 Let $n>0$, $\cardg>0$, $\ell\geq 2$, $\J_1,\dots,\J_\ell\subset\{1,\dots,n\}$, $\bfr_1,\dots,\bfr_\ell\in \N^n$, and $p=p_1+\dots+p_\ell$ with $p_j\in \polysvar{\J_j}{\bfr_j}$. Assume that the sets $\J_1,\dots,\J_\ell$ satisfy the running intersection property (Definition \ref{def:rip}).
Let $K_1,\dots,K_\ell\subset \{1,\dots,\cardg\}$ and let $\mathbf g=\{g_1,\dots,g_\cardg\}\subset\polys{}$ be a collection of $\cardg$ polynomials such that, if $i\in K_j$ for some $1\leq j\leq \ell$, then $g_i\in \polysvar{J_j}{}$. Let
\[\domain=\{x\in\R^n:\text{$g_i(x)\geq 0$ for all $i=1,\dots,\cardg$}\}.\]
Denote 
\[\mathbf g_{K_j}=\{g_i:i\in K_j\}.\]
 
 Let $\lojC_1,\dots,\lojC_{\ell}\geq 1$ and $\lojL_1,\dots,\lojL_{\ell}\geq 1$ be constants such that
\[\operatorname{dist}(x,S(\mathbf g_{K_j}))^{\lojL_j}\leq- \lojC_j\min \left\{ \{0\}\cup \left\{g_i(x):i\in K_j\right\} \right\} ,\quad x\in[-1,1]^n;\]
this is a version of the \emph{\L{}ojasiewicz inequality}, and its validity (with appropriate constants $\lojC_j,\lojL_j$) for semialgebraic functions is justified in \cite[Thm.~2.3]{baldi2021moment} and the papers cited therein.

 \begin{theorem}\label{thm:putinar}
Assume that
\begin{equation}\label{eq:gsizeassumption}
 \|g_j\|_\infty\leq\frac12,\quad j=1,2,\dots, \cardg.
\end{equation}
Assume that $\domain\subset [-1,1]^n$ and that there exist polynomials $s_{j,i}\in\polysvar{J_j}{\bfr_j}$, $j=1,\dots,\ell$, $i\in \{0\}\cup K_j$, such that the Archimedean conditions 
\begin{equation}\label{eq:archimedeanity}
 1-\sum_{i\in J_j}x_i^2=s_{j,0}(x)^2+\sum_{i\in K_j}s_{j,i}(x)^2g_i(x),\quad j=1,\dots, \ell,
\end{equation}
hold; that is to say, we assume that $1-\sum_{i\in J_j}x_i^2\in\quadraticmoduleg{\bfr_j}{\mathbf g_{K_j}}{J_j}$.

Then there are constants $\Call>0$, depending only on $\mathbf g$, $J_1,\dots,J_\ell$, such that, if $p\geq \varepsilon>0$ on $S(\mathbf{g})$, we have
 \[p\in \quadraticmoduleg{\bfr_1+\twos}{\mathbf g_{K_j}}{J_j}+\dots+\quadraticmoduleg{\bfr_\ell+\twos}{\mathbf g_{K_\ell}}{J_\ell}\]
 as long as, for all $1\leq j\leq \ell$ and $1\leq k\leq n$,
  \begin{gather}\label{eq:putinarcond1}
  (r_{j,k}+2)^2\geq \Call \frac{4(\ell+2)\left(\sum_i\|p_i\|_\infty \right)^{\lojL_j+1}(\deg p_j\sum_{i=1}^\ell\lip p_i)^{(2\lojL_j+|J_j|+2)(1+\frac{8\lojL_j}3)}}{\varepsilon^{1+\lojL_j+\frac{4\lojL_j+1}3(2\lojL_j+|J_j|+2)(1+\frac{8\lojL_j}3)}},\\
  \label{eq:putinarcond2}
  (r_{j,k}+2)^2\geq \Call\left(\frac{\left(\sum_{i=1}^\ell\|p_i\|_\infty\right)^{\frac{4\lojL_j+1}3}(\deg p_j\sum_{i=1}^\ell\lip p_i)^{\frac{8\lojL_j}3}}{ \varepsilon^{\frac{12\lojL_j+1}3}}\right)^2
 \end{gather}
\end{theorem}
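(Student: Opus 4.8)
The plan is to reduce the sparse statement on $\domain$ to the dense effective Putinar Positivstellensatz of \cite{baldi2021moment}, applied group by group, by means of a quantitative version of the decomposition of \cite{grimm2007sparse}. The main ingredient is a \emph{sparse decomposition lemma}: under the running intersection property and the hypothesis $p=p_1+\dots+p_\ell\ge\varepsilon$ on $\domain$, one rewrites $p=\hat p_1+\dots+\hat p_\ell$ with $\hat p_j\in\polysvar{\J_j}{}$ and $\hat p_j\ge\varepsilon'$ on $S(\mathbf g_{K_j})$, where $\varepsilon'$ is of order $\varepsilon/\ell$, while keeping $\deg\hat p_j$, the coordinatewise degrees of $\hat p_j$, $\|\hat p_j\|_\infty$ and $\lip\hat p_j$ controlled by $\deg p_j$, $\sum_i\|p_i\|_\infty$ and $\sum_i\lip p_i$ — precisely the quantities on the right of \eqref{eq:putinarcond1}--\eqref{eq:putinarcond2}. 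Given such a decomposition, for each $j$ one applies \cite{baldi2021moment} on $S(\mathbf g_{K_j})\subseteq[-1,1]^{\J_j}$ with the group \L{}ojasiewicz constants $\lojC_j,\lojL_j$ and the group Archimedean identity \eqref{eq:archimedeanity}; since $\hat p_j$ and every $g_i$ with $i\in K_j$ involve only the variables $\{x_i\}_{i\in\J_j}$, the resulting sum-of-squares certificate can be taken in $\polysvar{\J_j}{}$, so that $\hat p_j\in\quadraticmoduleg{\bfr_j+\twos}{\mathbf g_{K_j}}{\J_j}$ as soon as $\bfr_j$ obeys \eqref{eq:putinarcond1}--\eqref{eq:putinarcond2}, and summing over $j$ gives the stated membership. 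The constants $\Call$ collect the implied constants of \cite{baldi2021moment} for group $j$, the \L{}ojasiewicz constants $\lojC_j$, and the dependence on $\ell$ and on $\mathbf g$; only the exponents $\lojL_j$ and the clique sizes $|\J_j|$ remain visible, which is the whole point.

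For the decomposition lemma I would peel the groups off in reverse running-intersection order. Write $\varepsilon'=\varepsilon/(2\ell)$ and let $\pi_m$ denote the coordinate projection onto $\{x_i\}_{i\in\J_m}$. Suppose inductively that after removing groups $\ell,\dots,k+1$ one has a remainder $q^{(k)}=q^{(k)}_1+\dots+q^{(k)}_k$ on $\bigcup_{m\le k}\J_m$ with $q^{(k)}\ge\varepsilon-2(\ell-k)\varepsilon'$ on the partial domain $\bigcap_{m\le k}\pi_m^{-1}(S(\mathbf g_{K_m}))$ (for $k=\ell$ this is the hypothesis). Setting $\interset_k=\J_k\cap\bigcup_{m<k}\J_m$, pick by the running intersection property (Definition \ref{def:rip}) an index $s<k$ with $\interset_k\subseteq\J_s$, and let
\[\psi_k(x_{\interset_k})=\inf\{\,q^{(k)}_k(y):y\in S(\mathbf g_{K_k}),\ y_{\interset_k}=x_{\interset_k}\,\}.\]
Using the polynomial approximation developed in Sections \ref{sec:jacksonkernel}--\ref{sec:approximation}, produce $\phi_k\in\polysvar{\interset_k}{}$ with $\psi_k-2\varepsilon'\le\phi_k\le\psi_k-\varepsilon'$ on the projection of $S(\mathbf g_{K_k})$; set $\hat p_k:=q^{(k)}_k-\phi_k$ (so $\hat p_k\ge\varepsilon'$ on $S(\mathbf g_{K_k})$) and $q^{(k-1)}:=q^{(k)}-\hat p_k=q^{(k)}_1+\dots+q^{(k)}_{k-1}+\phi_k$, absorbing $\phi_k$ into $q^{(k)}_s$. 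Minimizing $q^{(k)}$ over a group-$k$ fibre — on which its $q^{(k)}_k$-part attains $\psi_k$ — then gives $q^{(k-1)}\ge\varepsilon-2(\ell-k+1)\varepsilon'$ on $\bigcap_{m\le k-1}\pi_m^{-1}(S(\mathbf g_{K_m}))$, closing the induction, provided the group-$k$ fibres are nonempty over that set (otherwise one first extends $\psi_k$ off its natural domain with the same modulus of continuity). After $k=2$ one reads off $p=\hat p_1+\dots+\hat p_\ell$. Only the $|\interset_k|\le\maxJ$ variables of $\interset_k$ enter $\psi_k$, hence $\phi_k$, which is exactly why the eventual bound depends on $\maxJ$ and not on $n$.

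The crux, and the source of the elaborate exponents in \eqref{eq:putinarcond1}--\eqref{eq:putinarcond2}, is controlling the modulus of continuity of the fiberwise minima $\psi_k$. In the hypercube setting of Theorem \ref{thm:mainabrv} the fibres are boxes and $\psi_k$ is Lipschitz with the same constant as $q^{(k)}_k$, so Jackson's theorem gives $\deg\phi_k=O(\maxJ\,\lip q^{(k)}_k/\varepsilon')$; here, however, the fibre $\{y\in S(\mathbf g_{K_k}):y_{\interset_k}=x_{\interset_k}\}$ moves semialgebraically with $x_{\interset_k}$, and how much it varies, in the Hausdorff metric — hence the modulus of $\psi_k$ — is only Hölder, with exponent governed by a \L{}ojasiewicz inequality of the type assumed in the hypothesis. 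Approximating a Hölder function to accuracy $\varepsilon'\approx\varepsilon/\ell$ already costs, at a single peeling step, a degree that is a power of $1/\varepsilon$ with exponent linear in $\lojL_k$; feeding the resulting $\hat p_j$ into the dense bound of \cite{baldi2021moment}, whose own degree estimate is polynomial in $\lojL_j$, and accounting for the $\ell$-fold accumulation of the Lipschitz data and the bounded loss per peeling step, yields the stated conditions, whose $1/\varepsilon$-exponents end up roughly cubic in $\lojL_j$. The last, more routine, point is the passage from the total-degree certificate of \cite{baldi2021moment} to the coordinatewise-truncated module $\quadraticmoduleg{\bfr_j+\twos}{\mathbf g_{K_j}}{\J_j}$: the certificate being supported on $\{x_i\}_{i\in\J_j}$, its coordinatewise degrees are bounded by its total degree, and the shift by $\twos$ absorbs the low-degree overhead incurred when the ball constraint behind the dense bound is realized through the group Archimedean identity \eqref{eq:archimedeanity}.
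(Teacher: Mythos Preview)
Your high-level plan---sparse decomposition followed by a groupwise application of Baldi--Mourrain---matches the paper's, but the execution diverges at a crucial point, and that divergence creates a real gap.

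The paper does \emph{not} take minima over semialgebraic fibres of $S(\mathbf g_{K_k})$. Its decomposition (Lemma~\ref{lem:approx}) minimizes over a \emph{fixed} projection of $\domain$, so the auxiliary function stays Lipschitz with constant controlled by $\lip f_k$; Jackson's theorem then gives $\phi_k$ of degree $O(1/\varepsilon)$, and no \L{}ojasiewicz exponent appears at this stage. The \L{}ojasiewicz constants $\lojL_j$ enter only afterwards, when the paper opens up the Baldi--Mourrain construction (Lemma~\ref{lem:bmproperties}) to write $h_j=\f_j+\hat q_j$ with $\f_j>0$ on the whole cube, applies the hypercube Schm\"udgen bound (Corollary~\ref{coro:puttingtogether}) to $\f_j$, and then passes from the cube preordering to $\quadraticmoduleg{\bfr_j+\twos}{\mathbf g_{K_j}}{J_j}$ via the Archimedean identity (Lemma~\ref{lem:archimedean}). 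The specific exponents in \eqref{eq:putinarcond1}--\eqref{eq:putinarcond2} are exactly the Baldi--Mourrain degree/size estimates composed with the $O(1/\varepsilon)$ degree coming from the Lipschitz decomposition.

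Your route instead loads the difficulty into the decomposition: you minimize $q^{(k)}_k$ over the moving fibre $\{y\in S(\mathbf g_{K_k}):y_{\interset_k}=x_{\interset_k}\}$ and then invoke Baldi--Mourrain as a black box. The problem is your claim that the H\"older modulus of $\psi_k$ is ``governed by a \L{}ojasiewicz inequality of the type assumed in the hypothesis.'' The hypothesis controls $\operatorname{dist}(x,S(\mathbf g_{K_j}))$ in terms of $\min_i g_i(x)$; from this you get $\operatorname{dist}((x',y),S(\mathbf g_{K_k}))\le C|x-x'|^{1/\lojL_k}$ for $y$ in the fibre over $x$, but that is distance in the ambient space, not distance to the fibre over $x'$---the nearest point of $S(\mathbf g_{K_k})$ need not project to $x'$. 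Bounding the Hausdorff variation of the fibres, and hence the H\"older exponent of $\psi_k$, requires a \emph{different} \L{}ojasiewicz-type constant (one for the projection map restricted to $S(\mathbf g_{K_k})$), which is neither assumed nor expressible through the $\lojL_j$, $\lojC_j$ of the statement. With that new exponent in play, the degree of $\phi_k$ is no longer $O(1/\varepsilon)$ but a power of $1/\varepsilon$ depending on this uncontrolled constant; after composition with Baldi--Mourrain you would not recover the stated bounds, whose $\lojL_j$-dependence comes \emph{solely} from the Baldi--Mourrain step. So either your constants $\Call$ silently depend on quantities outside the declared data, or the exponents are wrong. The paper's approach sidesteps the whole issue by never needing fibrewise control.
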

The proof of the theorem can be found in Section \ref{sec:putinarproof}.

\paragraph{Discussion.} By the same arguments we used in the discussion at the end of the previous section, if we assume $\lojL_1=\dots=\lojL_{\cardg}=1$,  the bounds we find in Theorem \ref{thm:putinar} give a bound for the complexity of the leading term as (a power of) 
\[\ell\binom{|\J_j|+|{\bfr_j}|}{|{\bfr_j}|}\approx \binom{|J_j|(1+C''\varepsilon^{-\frac{26}3-\frac53|J_j|})}{|J_j|C''\varepsilon^{-\frac{26}3-\frac53|J_j|}})=:B_{\textrm{sparsePut}}.\]
The assumption $\lojL_1=\dots=\lojL_{\cardg}=1$ is realized  for example when the so-called \emph{constraint qualification condition} that, at each point $x\in S(\mathbf g)$ all the active constraints $g_{i_1},\dots,g_{i_l}$ (i.e., those satisfying $g_{i_j}(x)=0)$ have linearly independent gradients $\nabla g_{i_1}(x),\dots,\nabla g_{i_l}(x)$), holds; this latter statement is proved in  \cite[Thm~2.11]{baldi2021moment}.

In this case, we have:
\begin{proposition}\label{prop:asympt2}
  If $\frac n2>|J_j|(\frac{26}3+\frac53|J_j|)$ for all $j=1,\dots,\ell$ and if $\lojL_1=\dots=\lojL_{\cardg}=1$, then we have
  \[\lim_{\varepsilon\searrow0}\frac{B_{\mathrm{sparsePut}}}{B_{\mathrm{dense}}}=0.\]
\end{proposition}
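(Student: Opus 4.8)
The plan is to imitate the proof of Proposition~\ref{prop:asympt1} almost verbatim, only with the exponent on $\varepsilon$ in the sparse bound changed to reflect the worse \L{}ojasiewicz-driven scaling of Theorem~\ref{thm:putinar}. First I would fix $j$ so that $J_j$ is the largest of the cliques, set $L:=\lojL_j=1$, and read off from \eqref{eq:putinarcond1}--\eqref{eq:putinarcond2} the leading-order growth $r_{j,k}=O(\varepsilon^{-(\frac{26}{3}+\frac{5}{3}|J_j|)/2})$: indeed, with $L=1$ the exponent in \eqref{eq:putinarcond1} is $1+1+\frac{5}{3}(|J_j|+4)(\frac{11}{3})$ on the $(r+2)^2$ side — one checks this is dominated (for the purposes of the leading term, up to the constant $\Call$ and $\varepsilon$-independent factors) by a term of order $\varepsilon^{-\frac{26}{3}-\frac{5}{3}|J_j|}$ once one also compares with \eqref{eq:putinarcond2}, exactly as asserted in the paragraph preceding the Proposition. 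Hence $|\bfr_j|\le |J_j|\,C''\varepsilon^{-\frac{26}{3}-\frac{5}{3}|J_j|}$ because at most $|J_j|$ of the entries $r_{j,k}$ are nonzero, which justifies the definition of $B_{\mathrm{sparsePut}}$ given just above the Proposition.

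Next I would invoke Lemma~\ref{lem:binomasympt} (the same auxiliary estimate used for Proposition~\ref{prop:asympt1}) to compare the two binomial coefficients. Writing $a(\varepsilon):=|J_j|\,C''\varepsilon^{-\frac{26}{3}-\frac{5}{3}|J_j|}$ for the sparse ``top'' parameter and $b(\varepsilon):=C\varepsilon^{-1/2}$ for the dense one, the Lemma gives $\binom{|J_j|+a(\varepsilon)}{a(\varepsilon)}=\Theta(a(\varepsilon)^{|J_j|})=\Theta(\varepsilon^{-|J_j|(\frac{26}{3}+\frac{5}{3}|J_j|)})$ and $\binom{n+b(\varepsilon)}{b(\varepsilon)}=\Theta(b(\varepsilon)^{n})=\Theta(\varepsilon^{-n/2})$, the multiplicative constant $\ell$ being absorbed. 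Therefore
\[
\frac{B_{\mathrm{sparsePut}}(\varepsilon)}{B_{\mathrm{dense}}(\varepsilon)}
= O\!\left(\varepsilon^{\frac{n}{2}-|J_j|\left(\frac{26}{3}+\frac{5}{3}|J_j|\right)}\right),
\]
and the right-hand side tends to $0$ as $\varepsilon\searrow0$ precisely under the stated hypothesis $\frac{n}{2}>|J_j|\left(\frac{26}{3}+\frac{5}{3}|J_j|\right)$ for every $j$, since then the exponent is a fixed positive number. Taking the maximum of the cliques is what lets us quantify over all $j$.

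The step I expect to require the most care is not conceptual but bookkeeping: extracting the clean leading exponent $\frac{26}{3}+\frac{5}{3}|J_j|$ from the somewhat baroque exponents in \eqref{eq:putinarcond1}--\eqref{eq:putinarcond2}. One must (i) specialise to $\lojL_j=1$, (ii) recognise that all the polynomial-in-$\deg p_j$, $\lip p_i$, $\|p_i\|_\infty$ prefactors are $\varepsilon$-independent and so do not affect the order, (iii) check that \eqref{eq:putinarcond2} — whose exponent with $L=1$ is $2\cdot\frac{13}{3}=\frac{26}{3}$ on $(r+2)^2$, i.e. $r=O(\varepsilon^{-13/3})$ — is indeed no stronger than \eqref{eq:putinarcond1} for small $\varepsilon$ once $|J_j|\ge1$, so that \eqref{eq:putinarcond1} dictates the rate, and (iv) confirm that Lemma~\ref{lem:binomasympt} applies in the regime where both the ``top'' and ``bottom'' parameters grow but the gap $|J_j|$, resp. $n$, stays fixed. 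Once these are dispatched the argument is word-for-word the one already written out for Proposition~\ref{prop:asympt1}, so I would likely present it compactly, referencing that proof.
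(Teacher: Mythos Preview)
Your proposal is correct and follows essentially the same route as the paper: apply Lemma~\ref{lem:binomasympt} to the ratio $B_{\mathrm{sparsePut}}/B_{\mathrm{dense}}$ to obtain $O(\varepsilon^{\frac{n}{2}-|J_j|(\frac{26}{3}+\frac{5}{3}|J_j|)})$, which tends to $0$ under the hypothesis. The paper's proof is in fact the two-line version you anticipate at the end; the exponent $\tfrac{26}{3}+\tfrac{5}{3}|J_j|$ is not re-derived inside the proof but simply taken from the definition of $B_{\mathrm{sparsePut}}$ given in the discussion immediately preceding the Proposition, so your ``bookkeeping'' paragraph is extra scaffolding rather than part of the argument proper.
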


\MK{Again the implication is that the sparse bound asymptotically outperforms the dense bound  provided that the largest clique is sufficiently small}.
\begin{proof}[Proof of Proposition \ref{prop:asympt2}]
Lemma \ref{lem:binomasympt} gives
\begin{equation*}
\frac{B_{\mathrm{sparsePut}}}{B_{\mathrm{dense}}}=
\frac{\displaystyle\binom{|\J_j|(1+C'\varepsilon^{-\frac{26}3-\frac53|J_j|})}{|\J_j|C'\varepsilon^{-\frac{26}3-\frac53|J_j|}}}{\binom{n+C\varepsilon^{-1/2}}{C\varepsilon^{-1/2}}}=O(\varepsilon^{\frac n2-|J_j|(\frac{26}3+\frac53|J_j|)}),
\end{equation*}
which tends to 0 if $\frac n2>|J_j|(\frac{26}3+\frac53|J_j|)$. 
\end{proof}

\paragraph{Organization of the paper.} 
The proof of Theorem \ref{thm:mainabrv} can be seen as a variable-separated version of the proof in \cite{laurent2021effective}, which relies on the Jackson kernel. Therefore in Section \ref{sec:jacksonkernel} we derive the suitable ingredients for sparse Jackson kernels while carefully taking into account each variable separately. 

A strategy is also required to write a positive polynomial $p$ that is known to be a sum $p=p_1+\dots+p_\ell$ with $p_i\in\polysvar{J_i}{}$ as a similar sum $p=h_1+\dots+h_\ell$ but now with $h_j\in \polysvar{J_j}{}$ and $h_j\geq 0 $ on $[-1,1]^{|J_j|}$; this is done in Section \ref{sec:approximation}.

Section \ref{sec:proofs} gives the proofs of Theorems \ref{thm:mainabrv} and \ref{thm:putinar}, together with the statement and proof of Lemma \ref{lem:binomasympt}, which was used in the proofs of Propositions \ref{prop:asympt1} and \ref{prop:asympt2} above.

\section{The sparse Jackson kernel}
\label{sec:jacksonkernel}

The measure $\chebyshevmeas{n}$ on the box $[-1,1]^n$ defined by
\[ d\chebyshevmeas{n}(x)\coloneqq\frac{dx_1}{\pi\sqrt{1-x_1^2}}\cdots \frac{dx_n}{\pi\sqrt{1-x_n^2}},\quad x=(x_1,\dots,x_n)\in[-1,1]^n,\]
is known as the \emph{(normalized) Chebyshev measure}; it is a probability measure on $[-1,1]^n$. It induces the inner product
\[\inner{f}{g}{\chebyshevmeas n}\coloneqq\int_{[-1,1]^n}f(x)g(x)\,d\chebyshevmeas n(x)\]
and the norm $\|f\|_{\chebyshevmeas{n}}=\sqrt{\inner{f}{f}{\chebyshevmeas{n}}}$.

For $k=0,1,\dots$, we let $T_k\in \polys{}$ be the univariate \emph{Chebyshev polynomial} of degree $k$, defined by
\[T_k(\cos\theta)\coloneqq\cos(k\theta),\quad\theta\in\R.\]
The Chebyshev polynomials satisfy $|\chebyshevpoly_k(x)|\leq 1$ for all $x\in[-1,1]$, and 
\[\inner{T_a}{T_b}{\chebyshevmeas 1}=\int_{-1}^1\frac{\chebyshevpoly_a(x)\chebyshevpoly_b(x)}{\pi\sqrt{1-x^2}}dx=\begin{cases}0,&a\neq b,\\1,&a=b=0,\\\frac12,&a=b\neq 0.\end{cases}\]
For a multi-index $\multiindex=(i_1,\dots,i_n)$, we let
\[\chebyshevpoly_\multiindex(x_1,\dots,x_n)\coloneqq \chebyshevpoly_{i_1}(x_1)T_{i_2}(x_2)\cdots \chebyshevpoly_{i_n}(x_n)\]
be the multivariate Chebyshev polynomials, which then satisfy (see for example \cite[\S II.A.1]{weisse2006kernel}), for multi-indices $\multiindex$ and $\multiindex'$,
\begin{equation}\label{eq:inner}
 \deg\chebyshevpoly_\multiindex=|\multiindex|\quad\textrm{and}\quad \inner{\chebyshevpoly_\multiindex}{\chebyshevpoly_{\multiindex'}}{\chebyshevmeas n}=\begin{cases}
                                    0,&\multiindex\neq \multiindex',\\
                                    2^{-\hamming(\multiindex)},&\multiindex=\multiindex'.
                                   \end{cases}
\end{equation}                    
Thus $p\in\polys{d}$ can be expanded as $p=\sum_{|\multiindex|\leq d}2^{\hamming(\multiindex)}\inner{p}{\chebyshevpoly_\multiindex}{\chebyshevmeas n}\chebyshevpoly_\multiindex$.

If we let, %
for a finite collection $\Lambda\subseteq\R\times\Nzero^n$ of pairs $(\lambda,\multiindex)$ of a real number $\lambda$ and a multi-index~$\multiindex$,
\[\kernel{}{\Lambda}(x,y)=\sum_{(\lambda,I)\in \Lambda}2^{w(\multiindex)}\lambda\, \chebyshevpoly_\multiindex(x)\chebyshevpoly_\multiindex(y),\quad x,y\in\R^n,\]
then, for any $p\in\polys{}$, we have
\[\kernelop{}{\Lambda}(p)(x)\coloneqq\int_{[-1,1]^n}\kernel{}{\Lambda}(x,y)p(y)\,d\chebyshevmeas n(y)=\sum_{\substack{(\lambda,I)\in\Lambda\\|I|\leq d}} 2^{\hamming(\multiindex)}\lambda \inner{p}{\chebyshevpoly_\multiindex}{\chebyshevmeas{n}} \chebyshevpoly_\multiindex(x).\]
This means that, if we set all the nonzero numbers $\lambda$ equal to 1, then $\kernelop{}{\Lambda}$ is the identity operator in the linear span of $ \{T_\multiindex:{\exists \lambda\ne 0\;\mathrm{s.t.}\;}(\lambda,I)\in \Lambda\}\subseteq\polys{d}$. 

We let, for $r,k\in\N$,
\[\lambda^r_k=\frac{1}{r+2}\left((r+2-k)\cos\tfrac{\pi k}{r+2}+\frac{\sin\frac{\pi k}{r+2}}{\sin\frac{\pi }{r+2}}\cos\tfrac{\pi }{r+2}\right),\quad 1\leq k\leq r,\]
and 
\[\lambda^r_0=1.\]
We set, for $\bfr=(r_1,\dots,r_n)\in\Nzero^n$,
 \[\lambda^\bfr_\multiindex=\prod_{j=1}^n\lambda_{i_j}^{r_j}\]
and
\[\Lambda_\bfr=\{(\lambda^\bfr_\multiindex,\multiindex):I\leq\bfr\}.\]
Then  $\jkernel{\bfr}=\kernel{}{\Lambda_\bfr}$ is the \emph{($\bfr$-adapted) Jackson kernel}, and its associated linear operator $\kernelop{}{\Lambda_\bfr}$ will be denoted $\jkernelop{\bfr}$.
\begin{theorem}\label{thm:propsK}
 We have $\jkernelop{\bfr}(\polysvar{\J}{\bfr})\subseteq \polysvar{\J}{\bfr}$, and if $p(x)\geq 0$ on $[-1,1]^n$ then  $\jkernelop{\bfr}(p)\geq 0$ on $[-1,1]^n$.
 
 Also, we have:
 \begin{enumerate}[label=P\arabic*.,ref=P\arabic*]
  \item \label{P1} If $p\in\polysvar{\J}{\bfr}$ satisfies $p(x)\geq 0$ for all $x\in[-1,1]^n$,
  \[\jkernelop{\bfr}(p)\in\preordering{\bfr}{\J}.\]
  \item \label{P2} Let $p\in \polysvar{\J}{\bfr}$ be a polynomial that satisfies $0\leq p(x)\leq 1$ for all $x\in[-1,1]^n$, and  %
  for all $I=(i_1,\dots,i_n)\in \indexset{p}$, assume that $\bfr=(r_1,\dots,r_n)$ verifies
  \begin{equation}\label{eq:effcond}
   \frac{i_j^2}{(r_j+2)^2}\leq \frac{1}{2\pi^2n},\quad 1\leq j\leq n.
  \end{equation}
  Assume that
  \[\varepsilon\geq2n\pi^2\left(\prod_{1\leq k\leq n}((\fulldeg p)_k+1)\right)\max_{\multiindex\in\indexsetp}\left[2^{\hamming(\multiindex)/2}\max_j\frac{i_j^2}{(r_j+2)^2}\right]>0.\]
  Then 
  \[\|(\jkernelop{\bfr})^{-1}(p+\varepsilon)-(p+\varepsilon)\|_\infty\leq \varepsilon.\]
 \end{enumerate}

\end{theorem}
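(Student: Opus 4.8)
The plan is to treat the four assertions in increasing order of difficulty, reducing the multivariate statements to the univariate Jackson kernel facts wherever possible via the tensor-product structure $\lambda^\bfr_\multiindex=\prod_j\lambda^{r_j}_{i_j}$.

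\emph{Invariance and positivity.} First I would observe that $\jkernelop{\bfr}$ acts diagonally in the Chebyshev basis: it multiplies the coefficient of $\chebyshevpoly_\multiindex$ by $\lambda^\bfr_\multiindex$, which is nonzero exactly when $I\leq\bfr$ and annihilates everything else. Hence if $p\in\polysvar{\J}{\bfr}$ then so is $\jkernelop{\bfr}(p)$, since the support of $\jkernelop{\bfr}(p)$ in the Chebyshev basis is contained in that of $p$; this gives the first invariance claim. For the positivity claim I would exhibit $\jkernel{\bfr}(x,y)$ as a product over coordinates of univariate Jackson kernels $\jkernel{r_j}(x_j,y_j)$, each of which is the classical (Fej\'er--Jackson) kernel and is known to be nonnegative on $[-1,1]^2$ (this is the standard positivity of the univariate Jackson kernel, used in \cite{laurent2021effective}); a product of nonnegative functions is nonnegative, so $\jkernel{\bfr}\geq0$ on $[-1,1]^n\times[-1,1]^n$, and then $\jkernelop{\bfr}(p)(x)=\int \jkernel{\bfr}(x,y)p(y)\,d\chebyshevmeas n(y)\geq0$ whenever $p\geq0$ on $[-1,1]^n$, since $\chebyshevmeas n$ is a positive measure.

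\emph{Property \ref{P1}.} Here the goal is to show $\jkernelop{\bfr}(p)$ lies in the truncated preordering $\preordering{\bfr}{\J}$ when $p\geq0$ on $[-1,1]^n$ and $p\in\polysvar{\J}{\bfr}$. I would again factor coordinatewise: on the variables in $\J$ the relevant univariate Jackson kernel of degree $r_j$ is itself a sum of squares of polynomials of degree $\le r_j$, plus $(1-x_j^2)$ times another such sum (this is the univariate Schm\"udgen-type representation of the Jackson kernel on $[-1,1]$ that underlies \cite{laurent2021effective}); for the variables outside $\J$ nothing happens since $p$ does not involve them. Taking the tensor product of these per-variable SOS representations and integrating $p(y)\geq0$ against $\jkernel{\bfr}(x,y)\,d\chebyshevmeas n(y)$ expresses $\jkernelop{\bfr}(p)(x)$ as a nonnegative combination (the combination weights being $\int \cdots p(y)d\chebyshevmeas n(y)\ge 0$) of products $\prod_{j\in S}(1-x_j^2)$ times squares of polynomials in $\{x_j\}_{j\in\J}$ of the right degree, which is exactly membership in $\preordering{\bfr}{\J}$; one must check that the degree bounds $\fulldeg(\sigma_K g_K)\leq\bfr$ are respected, which follows because each univariate factor has degree $\le r_j$ in $x_j$.

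\emph{Property \ref{P2}, the main obstacle.} This is the quantitative heart of the theorem and where the real work lies. The operator $\jkernelop{\bfr}$ is invertible on $\polysvar{\J}{\bfr}$ (all $\lambda^\bfr_\multiindex\neq0$ there), with $(\jkernelop{\bfr})^{-1}$ multiplying the $\chebyshevpoly_\multiindex$-coefficient by $1/\lambda^\bfr_\multiindex$. So $(\jkernelop{\bfr})^{-1}(p+\varepsilon)-(p+\varepsilon)=\sum_{\multiindex\in\indexsetp}2^{\hamming(\multiindex)}\inner{p}{\chebyshevpoly_\multiindex}{\chebyshevmeas n}\big(\tfrac{1}{\lambda^\bfr_\multiindex}-1\big)\chebyshevpoly_\multiindex$ (the constant term is fixed since $\lambda^\bfr_\mathbf 0=1$). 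To bound the sup norm I would: (i) use $|\chebyshevpoly_\multiindex|\le1$ on $[-1,1]^n$ and $|\inner{p}{\chebyshevpoly_\multiindex}{\chebyshevmeas n}|\le\|p\|_\infty\le1$; (ii) bound $\tfrac{1}{\lambda^\bfr_\multiindex}-1$ using the univariate estimate $1-\lambda^r_k\le \pi^2 k^2/(r+2)^2$ (the classical Jackson-kernel eigenvalue decay, cf.\ \cite{laurent2021effective}), which under the standing hypothesis \eqref{eq:effcond} keeps each factor $\lambda^{r_j}_{i_j}$ bounded below (say $\ge 1-\tfrac1{2\pi^2 n}\cdot\pi^2 \ge \tfrac12$ after summing the $n$ deviations), so that $\tfrac{1}{\lambda^\bfr_\multiindex}-1\le 2\big(1-\lambda^\bfr_\multiindex\big)\le 2\sum_{j}(1-\lambda^{r_j}_{i_j})\le 2\pi^2\sum_j i_j^2/(r_j+2)^2\le 2n\pi^2\max_j i_j^2/(r_j+2)^2$; (iii) count the terms: the sum is over $\multiindex\in\indexsetp$, which is contained in the box $\prod_k\{0,1,\dots,(\fulldeg p)_k\}$, so there are at most $\prod_k((\fulldeg p)_k+1)$ of them, and each carries a factor $2^{\hamming(\multiindex)}$, which I would pair with one factor of $2^{-\hamming(\multiindex)/2}$ hidden in the normalization to leave $2^{\hamming(\multiindex)/2}$. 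Collecting everything gives
\[
\|(\jkernelop{\bfr})^{-1}(p+\varepsilon)-(p+\varepsilon)\|_\infty\ \le\ 2n\pi^2\Big(\prod_{1\le k\le n}((\fulldeg p)_k+1)\Big)\max_{\multiindex\in\indexsetp}\Big[2^{\hamming(\multiindex)/2}\max_j\frac{i_j^2}{(r_j+2)^2}\Big],
\]
which is $\le\varepsilon$ precisely by the assumed lower bound on $\varepsilon$. The delicate points I expect to be the most error-prone are getting the power of $2^{\hamming(\multiindex)}$ right (reconciling the $2^{w(\multiindex)}$ in the definition of the kernel with the $2^{-w(\multiindex)}$ normalization of $\inner{\chebyshevpoly_\multiindex}{\chebyshevpoly_\multiindex}{\chebyshevmeas n}$ and with the claimed $2^{\hamming(\multiindex)/2}$), and justifying the lower bound $\lambda^\bfr_\multiindex\ge\tfrac12$ uniformly so that the inequality $1/t-1\le 2(1-t)$ can be applied with $t=\lambda^\bfr_\multiindex$; both of these hinge on using \eqref{eq:effcond} carefully, together with the elementary fact $1-\prod_j t_j\le\sum_j(1-t_j)$ for $t_j\in[0,1]$.
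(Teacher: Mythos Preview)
Your treatment of the invariance claim, the positivity-preservation claim, and property~\ref{P2} is correct and essentially identical to the paper's argument (which packages the same eigenvalue computation, the univariate bound $1-\lambda^r_k\le\pi^2k^2/(r+2)^2$, the product estimate $1-\prod_j t_j\le\sum_j(1-t_j)$, and the Cauchy--Schwarz bound $|\inner{p}{\chebyshevpoly_\multiindex}{\chebyshevmeas n}|\le 2^{-\hamming(\multiindex)/2}$ into Lemma~\ref{lem:propsK}).

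For property~\ref{P1}, however, there is a genuine gap. You argue that for each fixed $y$ the univariate Jackson kernels admit a Karlin--Shapley representation $\sigma_0(x_j;y_j)+(1-x_j^2)\sigma_1(x_j;y_j)$ with $\sigma_0,\sigma_1$ SOS in $x_j$, tensor these, and then claim that integrating against $p(y)\,d\chebyshevmeas n(y)$ produces ``a nonnegative combination'' of preordering elements. But the SOS multipliers $\sigma_S(x;y)$ so obtained depend on $y$ (in general not polynomially, since Karlin--Shapley passes through the roots of $K(\cdot,y_j)$), and an integral $\int\sigma_S(x;y)\,p(y)\,d\chebyshevmeas n(y)$ of a $y$-parameterized family of SOS polynomials in $x$ is not automatically an SOS polynomial in $x$ of controlled degree; at the very least this requires a separate argument (e.g.\ a Gram-matrix argument showing that $\int\big(\sum_i q_i(x;y)^2\big)p(y)\,d\mu(y)$ is SOS in $x$, together with measurability and uniform degree bounds for the $q_i$), none of which you supply. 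The paper sidesteps this entirely by invoking a positive-weight quadrature rule exact on $\polysvar{\J}{\bfr}$: writing $\jkernelop{\bfr}(p)(x)=\sum_i w_i\,p(z_i)\,\jkernel{\bfr}(z_i,x)$ with $w_i>0$ and $p(z_i)\ge0$ turns the integral into a \emph{finite} nonnegative combination, and for each fixed node $z_i$ the polynomial $x\mapsto\jkernel{\bfr}(z_i,x)=\prod_j\jkernel{(r_j)}(z_{i,j},x_j)$ is a product of nonnegative univariate polynomials on $[-1,1]$, each lying in its univariate preordering by Theorem~\ref{thm:karlinshapley}, so the product lies in $\preordering{\bfr}{\J}$. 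The quadrature step is the missing idea in your argument.
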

\begin{proof}
 The first statement of the theorem corresponds to Lemma \ref{lem:propsK}\ref{it:2} and \ref{lem:propsK}\ref{it:9}.
 Property \ref{P2} follows from Lemma \ref{lem:propsK}\ref{it:10}.
 
 Let us prove property \ref{P1}. Take a finite subset $\{z_i\}_{i}$ of $[-1,1]^n$ and a corresponding set of positive weights $\{w_i\}_i\subset\R$ giving a quadrature rule for integration of polynomials $q\in \polysvar{\J}{\bfr}$, so that
 \[\int_{[-1,1]^n}q(x)\,d\chebyshevmeas{n}(x)=\sum_iw_iq(z_i),\quad q\in\polysvar{\J}{\bfr}.\]
 Then we have, for $p$ as in the statement of \ref{P1}, 
 \[\jkernelop{\bfr}(p)(x)=\sum_{i}w_ip(z_i)\jkernel{\bfr}(z_i,x),\]
 with $w_ip(z_i)\geq 0$. Since, by  Lemma \ref{lem:propsK}\ref{it:5} and Theorem \ref{thm:karlinshapley} below, $\jkernel{\bfr}(z_i,x)$ is in $\preordering{\bfr}{\J}$, so is $\jkernelop{\bfr}(p)$.
\end{proof}

\begin{corollary}\label{coro:puttingtogether}
 If $p\in\polysvar{\J}{\bfr}$ satisfies $0\leq p(x)\leq 1$ for all $x\in [-1,1]^n$, %
 then 
 \[p+\varepsilon\in\preordering{\bfr}{\J}\]
 for all multi-indices $\bfr$ satisfying \eqref{eq:effcond} and
 \[\varepsilon\geq 2n\pi^2\left(\prod_{1\leq k\leq n}((\fulldeg p)_k+1)\right)\max_{I\in\indexsetp}\left(2^{\hamming(I)/2}\max_{1\leq j\leq n}\frac{i_j^2}{(r_j+2)^2}\right).\]
 Here, $\bfr=(r_1,\dots,r_n)$ and $\multiindex=(i_1,\dots,i_n)$.
\end{corollary}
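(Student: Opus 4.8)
The plan is to read the statement off properties \ref{P1} and \ref{P2} of Theorem \ref{thm:propsK}: the first says that $\jkernelop{\bfr}$ sends a polynomial of $\polysvar{\J}{\bfr}$ that is nonnegative on the box into the truncated preordering $\preordering{\bfr}{\J}$, while the second says that $\jkernelop{\bfr}$ is close enough to the identity that $p+\varepsilon$ can be recovered from its image in a controlled way. So the first step is to set
\[q := (\jkernelop{\bfr})^{-1}(p+\varepsilon).\]
This is well defined: in the Chebyshev basis $\jkernelop{\bfr}$ acts on $\polysvar{\J}{\bfr}$ as the diagonal operator with entries $\lambda^\bfr_\multiindex=\prod_{j=1}^n\lambda^{r_j}_{i_j}$, and since each $\lambda^{r_j}_{i_j}$ is positive, this diagonal operator is invertible and its inverse again maps $\polysvar{\J}{\bfr}$ into itself; hence $q\in\polysvar{\J}{\bfr}$.

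Next I would invoke \ref{P2}. The hypotheses of the corollary are exactly those required there: $0\le p\le 1$ on $[-1,1]^n$, the multi-index $\bfr$ obeys \eqref{eq:effcond}, and $\varepsilon$ satisfies the displayed lower bound. Consequently $\|q-(p+\varepsilon)\|_\infty\le\varepsilon$, and therefore
\[q \ge (p+\varepsilon)-\varepsilon = p \ge 0 \quad\text{on }[-1,1]^n.\]
Finally, $q\in\polysvar{\J}{\bfr}$ is nonnegative on the box, so \ref{P1} gives $\jkernelop{\bfr}(q)\in\preordering{\bfr}{\J}$; but $\jkernelop{\bfr}(q)=p+\varepsilon$ by construction, which is the assertion.

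There is no serious obstacle here — the statement is essentially a repackaging of Theorem \ref{thm:propsK}. The one spot that deserves a line of justification is the first step: checking that $(\jkernelop{\bfr})^{-1}$ is defined on $\polysvar{\J}{\bfr}$ and preserves it. This is immediate from the explicit diagonal form of the Jackson operator in the Chebyshev basis together with the positivity of the coefficients $\lambda^r_k$ established in Section \ref{sec:jacksonkernel}; everything else is a direct substitution into Theorem \ref{thm:propsK}.
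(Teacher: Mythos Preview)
Your proof is correct and follows essentially the same argument as the paper's own proof: apply \ref{P2} to see that $(\jkernelop{\bfr})^{-1}(p+\varepsilon)\ge 0$ on $[-1,1]^n$, then apply \ref{P1} (together with the invertibility of $\jkernelop{\bfr}$ on $\polysvar{\J}{\bfr}$) to conclude $p+\varepsilon=\jkernelop{\bfr}\big((\jkernelop{\bfr})^{-1}(p+\varepsilon)\big)\in\preordering{\bfr}{\J}$. The only difference is cosmetic: you spell out the diagonal form in the Chebyshev basis, whereas the paper simply cites Lemma~\ref{lem:propsK}\ref{it:2} and~\ref{it:4}.
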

\begin{proof}
 By property \ref{P2} in Theorem \ref{thm:propsK}, 
 \[\|(\jkernelop{\bfr})^{-1}(p+\varepsilon)-(p+\varepsilon)\|_\infty\leq \varepsilon.\]
 Thus, $(\jkernelop{\bfr})^{-1}(p+\varepsilon)\geq0$ on $[-1,1]^n$.
 By property \ref{P1} and Lemma \ref{lem:propsK}\ref{it:2},
 \[p+\varepsilon=\jkernelop{\bfr}\circ(\jkernelop{\bfr})^{-1}(p+\varepsilon)\in\preordering{\bfr}{\J}.\qedhere\]
\end{proof}

The rest of this section is devoted to results used in the proof of Theorem \ref{thm:propsK}.

\begin{theorem}[{\cite[Th.~10.3]{karlin1953geometry}}]\label{thm:karlinshapley}
If $p \in \R[y]$ is a univariate polynomial of degree $d$ nonnegative on the interval $[a,b] \subset \R$, then
\[
\begin{cases}
p = \sigma_0 + \sigma_1(b-y)(y-a),\quad \sigma_0\in \Sigma_d[y], \;\;\;\;\,\sigma_1 \in \Sigma_{d-2}[y] & d \mathrm{\;\, even,} \\
p = \sigma_0(y-a) + \sigma_1(b-y),\quad \sigma_0\in \Sigma_{d-1}[y], \;\sigma_1 \in \Sigma_{d-1}[y] & d \mathrm{\;\, odd},
\end{cases}
\]
where $\Sigma_d$ is the cone of sum-of-squareds of polynomials of degree at most $d$.
\end{theorem}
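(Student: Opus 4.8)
The plan is to prove the statement by strong induction on the degree $d=\deg p$, working directly on $[a,b]$ (an affine change of variable would normalize to $[-1,1]$ but gains nothing). The base case $d=0$ is immediate: a nonnegative constant $c$ equals the square $(\sqrt c\,)^2$, so one takes $\sigma_0=c$ and $\sigma_1=0$. For $d\geq1$ the crucial device --- the one that handles uniformly the case in which $p$ has no root in $[a,b]$ at all --- is to subtract the minimum: let $\mu:=\min_{[a,b]}p\geq0$, attained at some $y_0\in[a,b]$, and put $\hat p:=p-\mu$, so that $\hat p\geq0$ on $[a,b]$, $\hat p(y_0)=0$, and $\deg\hat p=d$. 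Reinstating $\mu$ at the end is harmless: $\mu$ is a square (for the even form) and $\mu=\frac{\mu}{b-a}\big((y-a)+(b-y)\big)$ (for the odd form), so in either case it is absorbed into the sum-of-squares multipliers without exceeding the allowed degrees.

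The inductive step proceeds by locating $y_0$. If $y_0\in(a,b)$, then $\hat p$, being nonnegative near $y_0$, vanishes there to even order, so $\hat p=(y-y_0)^2h$ with $h\geq0$ on $[a,b]$ and $\deg h=d-2$; this case forces $d\geq2$, since a degree-$1$ polynomial nonnegative on $[a,b]$ can vanish only at an endpoint. If instead $y_0=a$ (the case $y_0=b$ is symmetric), then $\hat p=(y-a)h$ with $h\geq0$ on $[a,b]$ and $\deg h=d-1$. In both situations one applies the induction hypothesis to $h$: its degree is $d-2$ (same parity as $d$) or $d-1$ (opposite parity), so the even and odd representations interchange exactly as required. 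One then multiplies the representation of $h$ by $(y-y_0)^2$, respectively by $(y-a)$, expands to obtain terms of the forms $(y-y_0)^2\cdot(\text{SOS})$, $(y-a)^2\cdot(\text{SOS})$ and $(y-a)(b-y)\cdot(\text{SOS})$, absorbs the perfect-square factors into the SOS multipliers, and adds back $\mu$ as above. A routine degree count using $\deg h\leq d-1$ then places the resulting multipliers in $\Sigma_d$ and $\Sigma_{d-2}$ when $d$ is even, and in $\Sigma_{d-1}$ when $d$ is odd.

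Everything apart from the subtract-the-minimum idea is mechanical: the degree bookkeeping, together with the elementary facts that a perfect square times a sum of squares is a sum of squares and that a nonnegative constant is a square --- in particular no appeal to Fej\'er--Riesz or to ``a positive univariate polynomial is a sum of squares'' is needed. The one genuinely non-mechanical point, and the obstacle that defeats a naive ``factor $p$ over $\R$ and peel off its roots'' argument, is precisely that $p$ may be strictly positive on $[a,b]$ with all of its (possibly complex) roots lying away from the interval, so that there is nothing to peel; replacing $p$ by $p-\mu$ manufactures a root inside $[a,b]$ at the cost of only the harmless additive term $\mu$, and this is what makes the induction close.
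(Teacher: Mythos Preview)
Your argument is correct. The paper does not actually prove this theorem; it simply quotes it as Theorem~10.3 of Karlin--Shapley and uses it as a black box in establishing property~\ref{P1} of Theorem~\ref{thm:propsK}. Your inductive proof via subtracting the minimum is the classical elementary route to this result (often attributed to Luk\'acs or Markov), and the bookkeeping checks out in every case: in the interior-root case $(y-y_0)^2$ preserves parity and drops the degree by~$2$, in the endpoint case $(y-a)$ or $(b-y)$ flips parity and drops the degree by~$1$, and the constants $\mu$ and $\tfrac{\mu}{b-a}$ slot into $\Sigma_0$ without disturbing the degree bounds. The only point worth making explicit when you write it out is that factoring $\hat p=(y-y_0)^2h$ with $y_0\in(a,b)$ gives $h\ge0$ on all of $[a,b]$ (by continuity at $y_0$), not just on $[a,b]\setminus\{y_0\}$; you clearly intend this, but it deserves one sentence. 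Compared to the paper's bare citation, your proof has the advantage of being self-contained and of making transparent why the degree of the SOS multipliers never exceeds that of~$p$.
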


\begin{lemma}\label{lem:propsK}
 Let $\bfr\in\Nzero^n$ be a multi-index.
 The operator $\jkernelop{\bfr}$ defined above has the following properties:
 \begin{enumerate}[label=\roman*.,ref=(\textit{\roman*})]
  \item \label{it:2} $\jkernelop{\bfr}(\polysvar{\J}{\bfr})\subseteq \polysvar{\J}{\bfr}$.
  \item \label{it:3} %
  We have
  \[\jkernelop{\bfr}(\chebyshevpoly_\multiindex)=\begin{cases}
                                                    \lambda^\bfr_\multiindex \chebyshevpoly_\multiindex, & \multiindex\leq \bfr,\\
                                                    0,&\textrm{otherwise.}
                                                   \end{cases}\]
            In particular, $\jkernelop{\bfr}(1)=1$.
  \item \label{it:4} $\jkernelop{\bfr}$ is invertible in $\polysvar{\J}{\bfr}$ with $\J=\{i:1\leq i\leq n,\;r_i>0\}$.
  \item \label{it:6} $0< \lambda^\bfr_\multiindex\leq 1$ for all $0\leq \multiindex\leq \bfr$.
  \item \label{it:7}  For $\multiindex=(i_1,\dots,i_n)$ and $ \bfr=(r_1,\dots,r_n)$ in $\Nzero^n$,
  \[|1-\lambda^\bfr_\multiindex|=1-\lambda^\bfr_\multiindex\leq n\pi^2\max_j\frac{i_j^2}{(r_j+2)^2}.\]
  \item \label{it:8} For $\multiindex=(i_1,\dots,i_n)$ and $ \bfr=(r_1,\dots,r_n)$ in $\Nzero^n$ 
  that verify \eqref{eq:effcond}, %
  we have
  \[\left|1-\frac1{\lambda^\bfr_\multiindex}\right|\leq 2n\pi^2\max_j\frac{i_j^2}{(r_j+2)^2}.\]%
  \item \label{it:10} Let $p%
  \in \polys{\J,\bfr}$ with $p(x)\geq 0$ for all $x\in[-1,1]^n$ and $\|p\|_\infty\leq1$.
  Let $\bfr=(r_1,\dots,r_n)$ be a multi-index such that $I\leq \bfr$ for all $I\in\indexset{p}$, and assume that, for all $I=(i_1,\dots,i_n)\in \indexset{p}$, 
  condition \eqref{eq:effcond} is verified.
  Then we have
  \[\|(\jkernelop{\bfr})^{-1}(p)-p\|_\infty \leq 2n\pi^2\left(\prod_{1\leq k\leq n}((\fulldeg p)_k+1)\right)\max_{\substack{\multiindex\in\indexset{p}\\1\leq j\leq n}}\left[2^{\hamming(\multiindex)/2}\frac{i_j^2}{(r_j+2)^2}\right].\]
  \item \label{it:5} $\jkernel{\bfr}(x,y)\geq 0$ for all $x,y\in[-1,1]^n$.
  \item \label{it:9} If $p\in\polys{}$ is such that $p(x)\geq 0$ for $x\in[-1,1]^n$, then $\jkernelop{\bfr}(p)(x)\geq 0$ for all $x\in[-1,1]^n$.
 \end{enumerate}
\end{lemma}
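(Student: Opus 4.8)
The plan is to prove the ten items more or less in the order listed, since each builds on the previous ones, and to reduce everything to the univariate analysis of the Jackson coefficients $\lambda^r_k$, exploiting the tensor-product structure $\lambda^\bfr_\multiindex=\prod_j\lambda^{r_j}_{i_j}$ and $T_\multiindex=\prod_j T_{i_j}$. Items \ref{it:2} and \ref{it:3} are immediate from the definition of $\jkernelop{\bfr}$ as a Chebyshev multiplier: applying $\jkernelop{\bfr}$ to $T_\multiindex$ multiplies it by $\lambda^\bfr_\multiindex$ if $\multiindex\le\bfr$ and kills it otherwise (using \eqref{eq:inner} and the orthogonality of the $T_\multiindex$), so the operator preserves the span of $\{T_\multiindex:\multiindex\le\bfr,\ \multiindex\subseteq\J\}=\polysvar{\J}{\bfr}$, and sends $1=T_{\mathbf 0}$ to itself. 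Item \ref{it:4} then follows once we know $\lambda^\bfr_\multiindex>0$ for $\multiindex\le\bfr$ (item \ref{it:6}): on $\polysvar{\J}{\bfr}$ the operator is diagonal in the Chebyshev basis with nonzero eigenvalues, hence invertible, with $(\jkernelop{\bfr})^{-1}T_\multiindex=(\lambda^\bfr_\multiindex)^{-1}T_\multiindex$.

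The analytic heart is the univariate estimate for $\lambda^r_k$. I would first record the classical fact (this is exactly the Jackson/Fejér-type kernel estimate used in \cite{laurent2021effective}) that $0<\lambda^r_k\le 1$ and $0\le 1-\lambda^r_k\le \pi^2 k^2/(r+2)^2$ for $0\le k\le r$; the nonnegativity of the whole kernel, item \ref{it:5}, is the Fejér--Jackson positivity statement and I would cite it from the same source (or derive it from the closed form of the kernel as a squared Fejér kernel). Granting the univariate bound, item \ref{it:7} follows from a telescoping/union-bound argument:
\[
1-\lambda^\bfr_\multiindex = 1-\prod_{j=1}^n\lambda^{r_j}_{i_j} \le \sum_{j=1}^n\bigl(1-\lambda^{r_j}_{i_j}\bigr) \le \sum_{j=1}^n \frac{\pi^2 i_j^2}{(r_j+2)^2} \le n\pi^2\max_j\frac{i_j^2}{(r_j+2)^2},
\]
where the first inequality uses $1-\prod_j a_j\le\sum_j(1-a_j)$ for $a_j\in[0,1]$. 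Item \ref{it:8} then comes from $|1-1/\lambda^\bfr_\multiindex| = (1-\lambda^\bfr_\multiindex)/\lambda^\bfr_\multiindex$ together with the lower bound $\lambda^\bfr_\multiindex\ge 1-n\pi^2\max_j i_j^2/(r_j+2)^2\ge 1-\tfrac12=\tfrac12$ guaranteed by \eqref{eq:effcond}; dividing by something $\ge\tfrac12$ costs a factor of $2$.

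For item \ref{it:10}, expand $p=\sum_{\multiindex\in\indexset{p}}2^{\hamming(\multiindex)}\inner{p}{T_\multiindex}{\chebyshevmeas n}T_\multiindex$ and write
\[
(\jkernelop{\bfr})^{-1}(p)-p = \sum_{\multiindex\in\indexset{p}}2^{\hamming(\multiindex)}\inner{p}{T_\multiindex}{\chebyshevmeas n}\Bigl(\tfrac1{\lambda^\bfr_\multiindex}-1\Bigr)T_\multiindex;
\]
taking $\|\cdot\|_\infty$, using $|T_\multiindex|\le 1$ on $[-1,1]^n$, the bound $|\inner{p}{T_\multiindex}{\chebyshevmeas n}|\le 2^{-\hamming(\multiindex)}\|p\|_\infty\le 2^{-\hamming(\multiindex)}$ (Cauchy--Schwarz with $\|T_\multiindex\|_{\chebyshevmeas n}^2=2^{-\hamming(\multiindex)}$ and $\|p\|_{\chebyshevmeas n}\le\|p\|_\infty$, absorbing a further $2^{-\hamming(\multiindex)/2}$ factor carefully as in the statement), item \ref{it:8}, and the cardinality bound $\#\indexset{p}\le\prod_k((\fulldeg p)_k+1)$ gives exactly the claimed estimate; the $2^{\hamming(\multiindex)/2}$ in the final bound is the residual of the two competing powers of $2$. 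Finally item \ref{it:9} is immediate from \ref{it:5}, since $\jkernelop{\bfr}(p)(x)=\int \jkernel{\bfr}(x,y)p(y)\,d\chebyshevmeas n(y)$ is an integral of a nonnegative integrand. The main obstacle is bookkeeping the powers of $2^{\hamming(\multiindex)}$ consistently through the Chebyshev expansion and the Cauchy--Schwarz step so that item \ref{it:10} comes out with precisely the stated constant; the rest is standard once the univariate Jackson estimates $0<\lambda^r_k\le 1$, $1-\lambda^r_k\le\pi^2k^2/(r+2)^2$, and kernel positivity are in hand.
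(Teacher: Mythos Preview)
Your proposal is correct and follows essentially the same route as the paper's proof: cite the univariate Jackson estimates $0<\lambda^r_k\le 1$ and $1-\lambda^r_k\le\pi^2k^2/(r+2)^2$ from \cite{laurent2021effective}, use the tensor-product factorization of the kernel for item \ref{it:5}, and do the Chebyshev-expansion bookkeeping for item \ref{it:10}. The only cosmetic difference is that for item \ref{it:7} the paper bounds $1-\prod_j(1-\gamma_j)\le 1-(1-\max_j\gamma_j)^n\le n\max_j\gamma_j$ via Bernoulli's inequality, whereas you use the equivalent telescoping bound $1-\prod_j a_j\le\sum_j(1-a_j)$; both land on the same estimate.

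One small slip to fix in item \ref{it:10}: Cauchy--Schwarz gives $|\inner{p}{T_\multiindex}{\chebyshevmeas n}|\le\|p\|_{\chebyshevmeas n}\|T_\multiindex\|_{\chebyshevmeas n}\le 2^{-\hamming(\multiindex)/2}$, not $2^{-\hamming(\multiindex)}$ as you wrote (since $\|T_\multiindex\|_{\chebyshevmeas n}^2=2^{-\hamming(\multiindex)}$ means $\|T_\multiindex\|_{\chebyshevmeas n}=2^{-\hamming(\multiindex)/2}$). Combined with the $2^{\hamming(\multiindex)}$ weight from the expansion this leaves exactly the $2^{\hamming(\multiindex)/2}$ residual you correctly identify at the end, so the final bound is right and only the intermediate line needs correcting.
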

\begin{proof} Throughout, we follow \cite{laurent2021effective}.

  Item \ref{it:3} is immediate from the definitions and \eqref{eq:inner}.
  Item %
  \ref{it:2} follows from item \ref{it:3} and the fact that $\{\chebyshevpoly_\multiindex:\multiindex\leq \bfr,\;\multiindex\subseteq \J\}$ is a basis for $\polysvar{\J}{\bfr}$. 
  
  Observe that item \ref{it:3} means that $\jkernelop{\bfr}$ is diagonal in $\polysvar{\J}{\bfr}$, so in order to prove item \ref{it:4} it suffices to show that $\lambda^\bfr_\multiindex>0$ for all $I\leq \bfr$, $I\subseteq \J$. This follows immediately from item \ref{it:6}, which in turn follows from the definition of $\lambda^\bfr_\multiindex$ and \cite[Proposition 6(ii)]{laurent2021effective}, which shows that $0<\lambda_k^r\leq 1$ for all $0\leq k\leq r$.

  Similarly, by \cite[Proposition 6(iii)]{laurent2021effective} we have that, if $k\leq r$, then
  \[|1-\lambda^r_k|=1-\lambda^r_k\leq \frac{\pi^2 k^2}{(r+2)^2}.\]
  Thus, if $\gamma_j=1-\lambda_{i_j}^{r_j}\leq \pi^2i_j^2/(r_j+2)^2 $ and $\gamma=\max_j\gamma_j$, we also have, using Bernoulli's inequality \cite[Lemma 11]{laurent2021effective}
  \begin{align*}
   1-\lambda^\bfr_\multiindex&=1-\prod_{j=1}^n\lambda^{r_j}_{i_j}\\
   &=1-\prod_{j=1}^n(1-\gamma_j)\\
   &\leq 1-(1-\gamma)^n \\
   &\leq n\gamma\\
   &\leq n\pi^2\max_j\frac{i_j^2}{(r_j+2)^2}
  \end{align*}  
  This shows item \ref{it:7}. Using it, we can prove item \ref{it:8} as follows: condition \eqref{eq:effcond} %
  implies, by item \ref{it:7}, that $|1-\lambda^\bfr_\multiindex|\leq 1/2$, and hence $|\lambda^\bfr_\multiindex|\geq 1/2$, so
  \[\left|1-\frac1{\lambda^\bfr_\multiindex}\right|=\frac{|1-\lambda^\bfr_\multiindex|}{|\lambda^\bfr_\multiindex|}\leq  2n\pi^2\max_j\frac{i_j^2}{(r_j+2)^2},\]%
  leveraging item \ref{it:7} again.

   Let us show item \ref{it:10}.
  From items \ref{it:3} and \ref{it:4}, we have
  \begin{align*}
   \|(\jkernelop{\bfr})^{-1}(p)-p\|_\infty
   &=\left\|\sum_{\multiindex}\left[\frac1{\lambda^\bfr_\multiindex}2^{\hamming(\multiindex)}\inner{p}{\chebyshevpoly_\multiindex}{\chebyshevmeas{n}}\chebyshevpoly_{\multiindex}-2^{\hamming(\multiindex)}\inner{p}{\chebyshevpoly_\multiindex}{\chebyshevmeas{n}}\chebyshevpoly_{\multiindex}\right]\right\|_\infty\\
   & \leq  \sum_{\multiindex}2^{\hamming(\multiindex)}|\inner{p}{\chebyshevpoly_\multiindex}{\chebyshevmeas{n}}|\left|1-\frac1{\lambda^\bfr_\multiindex}\right|,
  \end{align*}
  because $|\chebyshevpoly_\multiindex(x)|\leq 1$ for all $x\in [-1,1]^n$.
  Plugging in the estimate from item \ref{it:8}, we get
  \begin{align*} 
   \|(\jkernelop{\bfr})^{-1}(p)-p\|_\infty&\leq \sum_{\multiindex}2^{\hamming(\multiindex)/2+1}n\pi^2\max_j\frac{i_j^2}{(r_j+2)^2}\\
   & \leq 2n\pi^2\left(\prod_{1\leq k\leq n}((\fulldeg p)_k+1)\right)\max_{\multiindex\in\indexset{p}}\left[2^{\hamming(\multiindex)/2}\max_j\frac{i_j^2}{(r_j+2)^2}\right]
  \end{align*}
  where we have also used 
  \begin{align*}
   |\inner{p}{\chebyshevpoly_\multiindex}{\chebyshevmeas{n}}|\leq \|p\|_{\chebyshevmeas{n}}\|\chebyshevpoly_\multiindex\|_{\chebyshevmeas{n}}\leq \|\chebyshevpoly_\multiindex\|_{\chebyshevmeas{n}}=2^{-\hamming(\multiindex)/2},
  \end{align*}
  which follows from \eqref{eq:inner}. 
  
  To prove item \ref{it:5}, let, for fixed $\bfr$, 
  \begin{align*}
   \Lambda_k&=\{(\lambda_\multiindex^\bfr,\multiindex):\multiindex\leq (0,\dots,0,r_k,0,\dots,0)\}\\
   &=\{(\lambda_{i_k}^{r_k},(0,\dots,0,i_k,0,\dots,0)):i_k\leq r_k\},\quad 1\leq k\leq n,
  \end{align*}
  and observe that
  \begin{equation}\label{eq:composition}
   \jkernelop{\bfr}=\kernelop{}{\Lambda_\bfr}=\kernelop{x_1}{\Lambda_{1}}\circ \kernelop{x_2}{\Lambda_{2}}\circ \dots\circ \kernelop{x_n}{\Lambda_{n}}
  \end{equation}
  where $\kernelop{x_k}{\Lambda_{k}}$ is the operator $\kernelop{}{\Lambda_{k}}$ acting in the variable $x_k$, i.e.,
  \[\kernelop{x_k}{\Lambda_k}(p)(x)=\int_{-1}^{1}\kernel{}{\Lambda_k}(x_k,y)\,p(x_1,\dots,x_{k-1},y,x_{k+1},\dots,x_n)\,d\chebyshevmeas{1}(y).\]
  Equation \eqref{eq:composition} follows from the identity
  \begin{align*}
   \kernel{}{\Lambda_\bfr}(x,y)&=\kernel{}{\Lambda_{1}}(x_1,y_1)\kernel{}{\Lambda_{2}}(x_2,y_2)\cdots\kernel{}{\Lambda_{n}}(x_n,y_n)\\
   &=\jkernel{{(r_1)}}(x_1,y_1)\jkernel{{(r_2)}}(x_2,y_2)\cdots\jkernel{{(r_n)}}(x_n,y_n)
  \end{align*}
  that can be checked from the definitions.   
  Item \ref{it:5} then follows from the well-known fact that $\jkernel{(r)}(x,y)\geq 0$ for all $r\in \Nzero$ and all $x,y\in[-1,1]$; see for example \cite[\S II.C.2--3]{weisse2006kernel}. 
  
  Item \ref{it:9} follows immediately from item \ref{it:5}.
\end{proof}

\section{Sparse approximation theory}
\label{sec:approximation}

For $1\leq i\leq n$ and a function $f\colon[-1,1]^n\to\R$, let
\[\lip_if=\sup_{\substack{x\in[-1,1]^n\\y\in[-1,1]}}\frac{|f(x)-f(x_1,\dots,x_{i-1},y,x_{i+1},\dots,x_n)|}{|x_i-y|}.\]

\begin{theorem}\label{thm:jackson}
 There is a constant $\Cjackson >0$ such that the following is true.
 Let $f\in C^0([-1,1]^n)$ be a Lipschitz function with variable-wise Lipschitz constants $\lip_1 f,\dots,\lip_nf$. Then for each multi-index $\mathbf m=(m_1,\dots,m_n)\in\N^n$ there is a polynomial $p\in \polys{\mathbf m}$ such that
 \[\sup_{x\in[-1,1]^n}|f(x)-p(x)|\leq \Cjackson\sum_{i=1}^n\frac{\lip_i f}{m_i}\]
 and 
 \[\lip_i p\leq 2\lip_i f.\]
\end{theorem}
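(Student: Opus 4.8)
The plan is to reduce the statement to one variable and then build $p$ by applying a univariate Jackson-type operator one coordinate at a time. For $1\le i\le n$ let $U_i$ be the univariate Jackson kernel operator $\jkernelop{(m_i)}$ of Section~\ref{sec:jacksonkernel} acting only in the variable $x_i$,
\[
(U_i h)(x)=\int_{-1}^{1}\jkernel{(m_i)}(x_i,y)\,h(x_1,\dots,x_{i-1},y,x_{i+1},\dots,x_n)\,d\mu_1(y),
\]
extended from polynomials to all of $C^0([-1,1]^n)$ by the same integral formula. By Lemma~\ref{lem:propsK}\ref{it:5} the kernel $\jkernel{(m_i)}$ is nonnegative on $[-1,1]^2$, and by Lemma~\ref{lem:propsK}\ref{it:3} it reproduces constants; hence each $U_i$ is a positive operator with $U_i 1=1$, so $\|U_i h\|_\infty\le\|h\|_\infty$, and $U_i$ maps a polynomial to a polynomial whose degree in $x_i$ is at most $m_i$ and whose degrees in the other variables are unchanged. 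I would then set $p:=U_1U_2\cdots U_n f$, which is a polynomial with $\fulldeg p\le\mathbf m$, i.e. $p\in\polys{\mathbf m}$.

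The one-dimensional ingredient I need is this: there is a constant $C_0>0$ such that for every $L$-Lipschitz $h\colon[-1,1]\to\R$ and every $m\in\N$, the degree-$m$ polynomial $\jkernelop{(m)}h$ satisfies $\|h-\jkernelop{(m)}h\|_\infty\le C_0L/m$ and $|\jkernelop{(m)}h(x)-\jkernelop{(m)}h(x')|\le 2L\,|x-x'|$ for all $x,x'\in[-1,1]$. For the first bound, from $U_m 1=1$ one gets $|h(x)-\jkernelop{(m)}h(x)|=\bigl|\int\jkernel{(m)}(x,y)(h(y)-h(x))\,d\mu_1(y)\bigr|\le L\int\jkernel{(m)}(x,y)|x-y|\,d\mu_1(y)$, and Cauchy--Schwarz together with the second-moment estimate $\int\jkernel{(m)}(x,y)(x-y)^2\,d\mu_1(y)=O(1/m^2)$ (uniform in $x\in[-1,1]$, a standard property of the Jackson kernel; cf.~\cite{laurent2021effective}, or read off from Section~\ref{sec:jacksonkernel} after the substitution $x=\cos\theta$) finishes it. For the Lipschitz bound, the substitution $x=\cos\theta$ turns $\jkernelop{(m)}$ into the classical trigonometric Jackson convolution $g\mapsto\mathcal{J}_m\ast g$ applied to the even $2\pi$-periodic function $g(\theta)=h(\cos\theta)$, and since $(\jkernelop{(m)}h)'(\cos\theta)=-(\mathcal{J}_m\ast g)'(\theta)/\sin\theta$ it suffices to prove $|(\mathcal{J}_m\ast g)'(\theta)|\le 2L\,|\sin\theta|$ for all $\theta$. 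Because $g$ is even about $0$ and about $\pi$, $g'$ is odd about those points and $|g'(\psi)|\le L|\sin\psi|$; folding the convolution integral about $0$ and $\pi$ produces the cancellation that absorbs the apparent singularity of $1/\sin\theta$ at $\theta\in\{0,\pi\}$.

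Granting this lemma, both assertions of the theorem follow. For the error, telescope:
\[
f-p=\sum_{i=1}^{n}U_1\cdots U_{i-1}\bigl(f-U_i f\bigr).
\]
The operators $U_1,\dots,U_{i-1}$ are $\|\cdot\|_\infty$-contractions, and applying the univariate estimate to the slice $x_i\mapsto f(x_1,\dots,x_i,\dots,x_n)$, whose Lipschitz constant does not exceed $\lip_i f$ uniformly in the remaining coordinates, gives $\|f-U_i f\|_\infty\le C_0\lip_i f/m_i$; hence $\|f-p\|_\infty\le C_0\sum_{i=1}^{n}\lip_i f/m_i$, and $\Cjackson:=C_0$ works. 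For the Lipschitz bound, fix $i$. If $j\ne i$ then $U_j$ does not increase $\lip_i$: for $x,x'$ differing only in the $i$-th coordinate,
\[
|U_j h(x)-U_j h(x')|\le\int\jkernel{(m_j)}(x_j,y)\,\lip_i h\,|x_i-x_i'|\,d\mu_1(y)=\lip_i h\,|x_i-x_i'|,
\]
using $\jkernel{(m_j)}\ge0$ and $\int\jkernel{(m_j)}(x_j,y)\,d\mu_1(y)=1$; whereas $U_i$ multiplies $\lip_i$ by at most $2$ by the univariate lemma. Processing $U_1,\dots,U_n$ one at a time (exactly one of them carries index $i$) yields $\lip_i p\le 2\lip_i f$.

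The genuinely delicate step is the univariate Lipschitz bound near $x=\pm1$: Bernstein's inequality permits a degree-$m$ polynomial to have derivative of size $\sim m\|p\|_\infty$ there, and it is only the evenness of $g(\theta)=h(\cos\theta)$ --- equivalently, the second-order vanishing of $(\mathcal{J}_m\ast g)'$ at $\theta=0,\pi$ --- that keeps $(\jkernelop{(m)}h)'$ bounded by a small multiple of $L$. Obtaining the clean constant $2$ (rather than some unspecified universal constant) requires care in this endpoint analysis; alternatively one can run the construction on the slightly enlarged interval $[-1-c/m,\,1+c/m]$ for a suitable absolute constant $c$ and then restrict, which pushes the bad boundary layer outside $[-1,1]$ at the cost of only harmless constant factors absorbed into $\Cjackson$.
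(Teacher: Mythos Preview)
Your architecture---apply a positive, constant-preserving univariate Jackson operator in each coordinate, telescope for the sup-norm error, and argue coordinate by coordinate for the Lipschitz bound---is exactly the paper's. The error bound goes through essentially as you wrote it. The genuine gap is in the Lipschitz inequality $\lip_i p\le 2\lip_i f$, and your sketch does not close it.

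After $x=\cos\theta$, what you need is $\bigl|(\mathcal J_m*g')(\theta)\bigr|\le 2L\,|\sin\theta|$ uniformly on $(0,\pi)$. From $|g'(\psi)|\le L|\sin\psi|$ and the triangle inequality one gets only
\[
\bigl|(\mathcal J_m*g')(\theta)\bigr|\le L\!\int \mathcal J_m(\vartheta)\,|\sin(\theta-\vartheta)|\,d\vartheta\le L\Bigl(|\sin\theta|+\!\int \mathcal J_m(\vartheta)\,|\sin\vartheta|\,d\vartheta\Bigr)=L\bigl(|\sin\theta|+O(1/m)\bigr),
\]
which is $\le 2L|\sin\theta|$ only when $|\sin\theta|\gtrsim 1/m$. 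Your ``folding'' merely yields $(\mathcal J_m*g')(0)=(\mathcal J_m*g')(\pi)=0$, not the quantitative vanishing of order $|\sin\theta|$ nearby; for a Lipschitz $h$ you cannot Taylor-expand $h'(\cos(\theta+\vartheta))$ to extract the extra factor. Your fallback of enlarging to $[-1-c/m,\,1+c/m]$ does not repair this via the simple route either: on the image of $[-1,1]$ one has $|\sin\theta|\asymp\sqrt{c/m}$, so the chain-rule factor $|d\theta/dx|\asymp\sqrt{m/c}$ still blows up, and the coarse $\theta$-Lipschitz preservation argument gives $\lip_x p\lesssim L\sqrt{m/c}$.

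The paper sidesteps the endpoint issue by a \emph{fixed} enlargement: it Lipschitz-extends $f$ to $[-2,2]^n$, uses the classical Jackson kernel $(\sin m\vartheta/m\sin\vartheta)^4$ (not the Chebyshev--Jackson kernel of Section~\ref{sec:jacksonkernel}) with the substitution $x_i=2\cos\theta_i$, and then argues only that convolution with a nonnegative mass-one kernel does not increase the $\theta$-Lipschitz constant. Since $\bigl|\tfrac{d}{dx}\arccos(x/2)\bigr|\le 1/\sqrt3$ on $[-1,1]$ and $\lip_\theta g\le 2\lip_i f$, the chain rule gives $\lip_i p\le 2\lip_i f/\sqrt3<2\lip_i f$ for every $m$, with no endpoint analysis at all. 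The same device would work with your kernel $\jkernel{(m_i)}$; the essential point is that the enlargement factor must be independent of $m$.
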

\begin{proof}
 Jackson \cite[p.~2--6]{jackson1930theory} proved that there is a constant $C>0$ such that, if $g\colon\R\to\R$ is Lipschitz and $\pi$-periodic, $g(0)=g(\pi)$, then
 \begin{align}
 \label{eq:jacksonone}
 \left|g(\theta)-\frac{\int_{-\pi/2}^{\pi/2} g(\theta-\vartheta)\left(\frac{\sin m\vartheta}{m\sin \vartheta}\right)^4d\vartheta}{\int_{-\pi/2}^{\pi/2}\left(\frac{\sin m\vartheta}{m\sin \vartheta}\right)^4d\vartheta}\right|\leq \frac{C\lip g}{m},\quad m\in \N, x\in\R.
 \end{align}
 For a multivariate Lipschitz function $g\colon \R^n\to\R$ and a multi-index $\mathbf m=(m_1,\dots,m_n)\in\N^n$, let
 \[L_i(g)(\theta)=\frac{\int_{-\pi/2}^{\pi/2}\dots\int_{-\pi/2}^{\pi/2} g(\theta_1-\vartheta_1,\dots,\theta_i-\vartheta_i,\theta_{i+1},\dots,\theta_n)\prod_{j=1}^i\left(\frac{\sin m_j\vartheta_j}{m_j\sin \vartheta_j}\right)^4d\vartheta_1\dots d\vartheta_i}{\prod_{j=1}^i\int_{-\pi/2}^{\pi/2}\left(\frac{\sin m_j\vartheta_j}{m_j\sin \vartheta_j}\right)^4 d\vartheta_j}.\]
 Then we have, using the triangle inequality and the single-variable inequality \eqref{eq:jacksonone} at each step,
 \begin{multline*}
  |g(\theta)-L_n(g)(\theta)|\\
  \leq |g(\theta)-L_1(g)(\theta)|+|L_1(g)(\theta)-L_2(g)(\theta)|+\dots+|L_{n-1}(g)(\theta)-L_{n}(g)(\theta)|\\
  \leq C\left(\frac{\lip_1g}{m_1}+\dots+\frac{\lip_n g}{m_n}\right).
 \end{multline*}
 The function $\prod_j(\sin m_j\theta_j/m_j\sin \theta_j)^4$ is a polynomial of degree $m_j$ in $\cos \theta_j$ (cf. \cite[p.~3]{jackson1930theory}). If we replace $f$ with its Lipschitz extension to $[-2,2]^n$ and apply the results above to $g(\theta)=f(2\cos \theta_1,\dots,2\cos \theta_n)$ we get a polynomial $L_n(g)(\theta)$ in $\cos \theta_1,\dots,\cos \theta_n$ satisfying the above inequality. Thus
 \[p(x)=L_n(g)(\arccos(x_1/2),\dots,\arccos (x_n/2)),\quad x\in [-2,2],\] 
 is a polynomial with $\fulldeg p\leq\mathbf m$ that satisfies (cf. \cite[p.~13--14]{jackson1930theory})
 \[|f(x)-p(x)|\leq C\left(\frac{\lip_1 g}{m_1}+\dots+\frac{\lip_n g}{m_n}\right)\leq2C\left(\frac{\lip_1 f}{m_1}+\dots+\frac{\lip_n f}{m_n}\right),\]
 since $\lip_i g\leq2\lip_i f$. This proves the first statement, setting $\Cjackson=2C$.
 We also have 
 \[\left|\frac{d}{dx}\arccos(x/2)\right|=\frac{1}{2\sqrt{1-(x/2)^2}}\leq \frac1{\sqrt3}\quad\text{for}\quad x\in[-1,1]\] 
 and, by linearity and monotonicity of $L_n$,
 \begin{multline*}|L_ng(\theta)-L_ng(\theta_1,\dots,\theta_{i-1},\theta_i+t,\theta_{i+1},\dots,\theta_n)|\leq 
 \left|L_n(|t|\lip_i g)(\theta)\right|=
|t|\lip_i g\leq 2|t|\lip_i f,
 \end{multline*}
 whence 
 \begin{multline*}
  \lip_ip=\lip_iL_n(g)(\arccos(x_1/2),\dots,\arccos(x_n/2))\\
  \leq \lip_i L_n(g)\left|\frac{d}{dx_i}\arccos\frac {x_i}2\right|\leq \lip_iL_n(g)\leq 2\lip_if.
 \end{multline*}
 \qedhere
\end{proof}

\begin{lemma}[{a version of \cite[Lemma 3]{grimm2007sparse}}]\label{lem:approx}
 Let $\J_1,\dots, \J_\ell$ be subsets of $\{1,\dots,n\}$ satisfying the running intersection property.
 Suppose $f=f_1+\dots+f_\ell$ with $\ell\geq 2$, $f_j\in \polysvar{\J_j}{}$. %
 Let $\varepsilon>0$ be such that $f\geq \varepsilon$ on $\domain\subseteq [-1,1]^n$. 
 Pick numbers $\epsilon,\eta>0$ so that
 \[
                   \varepsilon=(\ell-1)\epsilon-(\ell-2)\eta\quad\textrm{and}\quad \epsilon>2\eta.
               \]
 Set, for $2\leq l\leq \ell$,
 \begin{equation}\label{eq:defD}
  D_{l,m}=%
  \left\lceil\frac{2\,\Cjackson\,|\interset_l|\sum_{k=l}^m\lip f_k}{\epsilon-2\eta}\right\rceil
 \end{equation}
 with $\interset_l$ as in \eqref{eq:definterset}, and $D_{1,m}=D_{2,m}$.

 Then $f=h_1+\dots+h_\ell$ for some $h_j\in   \polysvar{\J_j}{}$ with $h_j\geq \eta$ on $\domain\subseteq[-1,1]^n$ and

\begin{equation}\label{eq:entrywise}
  \fulldeg h_j\leq \inter{\max(\fulldeg f_j,\bar D_{j,\ell},\bar D_{j+1,\ell},\dots, \bar D_{\ell,\ell})}{\J_j}
 \end{equation}
 where $\bar D_{j,m}$ is the multi-index whose $k$-th entry equals $D_{j,m}$ if $k\in \interset_j=\J_j\cap \bigcup_{k<j}\J_i$ %
 and 0 otherwise, and the maximum is taken entry-wise.
 
 Additionally, if $\lip f$ denotes the Lipschitz constant of $f$ on $[-1,1]^n$, then  %
 \[\lip h_j\leq 3\sum_{k=j}^\ell\lip f_k.\]
 Finally, we have 
 \[\|h_j\|_\infty\leq 3\times 2^{\ell-1}\sum_{j=1}^\ell\|f_j\|_\infty.\]
\end{lemma}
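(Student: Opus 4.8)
The plan is to follow the inductive construction of Grimm--Netzer--Schweighofer \cite{grimm2007sparse}, peeling off one variable-group at a time while using Theorem \ref{thm:jackson} to replace non-polynomial "indicator-like" pieces by polynomials whose degree and Lipschitz behavior are controlled. Concretely, I would reorder so that $\J_\ell$ is the last group and set $\interset_\ell = \J_\ell\cap\bigcup_{k<\ell}\J_k$; by the running intersection property $\interset_\ell\subseteq \J_s$ for some $s<\ell$. The function $f - f_\ell = f_1+\dots+f_{\ell-1}$ does not see the variables in $\J_\ell\setminus\interset_\ell$, and I want to "shift mass" from $f_\ell$ onto the term indexed by $s$ in a way that depends only on the overlap variables $\interset_\ell$. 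Following \cite{grimm2007sparse}, one introduces an auxiliary Lipschitz function of the $\interset_\ell$-variables only — roughly a clipped/truncated version of the conditional behavior of $f_\ell$ on the slice — approximates it to accuracy governed by $\epsilon - 2\eta$ via Theorem \ref{thm:jackson} (this is exactly where the quantity $D_{\ell,m}= \lceil 2\Cjackson|\interset_\ell|\sum_{k=\ell}^m\lip f_k/(\epsilon-2\eta)\rceil$ comes from: $\Cjackson\sum_i \lip_i(\cdot)/m_i \le \epsilon-2\eta$ when each $m_i = D_{\ell,m}$ over the $|\interset_\ell|$ active coordinates), absorb the resulting polynomial into $f_s$, and thereby obtain $f = \tilde f_1 + \dots + \tilde f_{\ell-1} + h_\ell$ with $h_\ell\in\polysvar{\J_\ell}{}$, $h_\ell\ge\eta$ on $\domain$, $\fulldeg h_\ell$ bounded as in \eqref{eq:entrywise}, and each $\tilde f_j\in\polysvar{\J_j}{}$ still. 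Then I would recurse on the $(\ell-1)$-group decomposition $\tilde f_1+\dots+\tilde f_{\ell-1}$, which again satisfies RIP.

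The degree bound \eqref{eq:entrywise} then follows by bookkeeping: coordinate $k$ of $\fulldeg h_j$ only ever receives contributions either from the original $\fulldeg f_j$, or from one of the Jackson-approximation steps at stages $l\ge j$ for which $k\in\interset_l$, each of which contributes exactly $D_{l,\ell}$ in that coordinate — hence the entrywise maximum. The base case $D_{1,m}=D_{2,m}$ reflects that after $\ell-1$ peeling steps the two remaining terms are handled together. The bookkeeping must be done carefully, but it is routine once the recursion is set up, because each peeling step modifies only the single target index $s$ and adds a polynomial supported on $\interset_l$. For the positivity, at each step the choice $\varepsilon=(\ell-1)\epsilon-(\ell-2)\eta$ with $\epsilon>2\eta$ is the telescoping condition ensuring that after distributing the slack the running lower bound stays $\ge\eta$: roughly, peeling introduces an error $\le\epsilon-2\eta$ per step against a budget that starts at $\varepsilon$ and must land at $\eta$, and $(\ell-1)(\epsilon-2\eta)+\eta = \varepsilon - (\ell-2)\eta - \dots$; I would track this inequality explicitly through the induction.

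For the Lipschitz and sup-norm bounds I would argue inductively as well. The Jackson polynomial replacing the auxiliary function has Lipschitz constant at most twice that of the function it approximates (the $\lip_i p\le 2\lip_i f$ clause of Theorem \ref{thm:jackson}), and the auxiliary function, being a clipped combination of $f_\ell,\dots$, has $\lip \le \sum_{k=\ell}^\ell \lip f_k$ (more generally $\sum_{k=l}^\ell$ at stage $l$); combining $h_j$'s own contribution with the at most one extra term absorbed into it at each later stage gives a geometric-free bound $\lip h_j\le 3\sum_{k=j}^\ell\lip f_k$ — the factor $3$ is $1$ (original) $+\,2$ (the doubled Jackson approximant). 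The sup-norm is where the factor $2^{\ell-1}$ appears: each peeling step can at worst double the sup-norm of the term it touches (clipping to an interval of length comparable to the current sup-norm and redistributing), so after $\le\ell-1$ steps a term picks up a factor $2^{\ell-1}$, times the $3$ from the Jackson approximation, times $\sum_j\|f_j\|_\infty$. The main obstacle is getting the auxiliary clipped function and its parameters exactly right so that all four conclusions — polynomiality in the right variables, the $\eta$-lower-bound on $\domain$, the entrywise degree bound, and the norm bounds — hold simultaneously at each induction step; this is the technical heart and is precisely the content adapted from \cite[Lemma 3]{grimm2007sparse}, with the new ingredient being the variable-separated Jackson estimate of Theorem \ref{thm:jackson} in place of a univariate one.
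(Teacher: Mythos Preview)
Your plan is correct and matches the paper's proof essentially step for step: induction on $\ell$, peel off the last group via an auxiliary function on the overlap variables $\interset_\ell$, approximate it by Theorem~\ref{thm:jackson} to precision $(\epsilon-2\eta)/2$ with per-variable degree $D_{\ell,\ell}$, absorb the approximant into the RIP-target index, and recurse. The only point where your sketch is vaguer than the paper is the nature of the auxiliary function: it is not a clipping but the partial minimum $g(x)=\min_{y}f_\ell(x,y)-\epsilon/2$ over the non-overlap coordinates (this is exactly the Grimm--Netzer--Schweighofer construction), and the telescoping arithmetic is handled by subtracting the constant $(\ell-2)(\epsilon-\eta)$ before applying the $\ell=2$ case and restoring it before invoking the inductive hypothesis, so that the lower bound becomes $(\ell-2)\epsilon-(\ell-3)\eta$ as required.
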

\begin{remark}
    If $\domain=[-1,1]^n$, we also have the obvious estimate $\|h_j\|_\infty\leq \|f\|_\infty$, that follows from $0\leq h_j\leq f$.
\end{remark}
\begin{proof}
 In order to prove the result by induction, let us first consider the case $\ell=2$. In this case, $\varepsilon=\epsilon$ and $\epsilon>2\eta$. Assume that $\J_1\cap \J_2\neq \emptyset$. For a subset $J\subset\{1,\dots,n\}$, let $\pi_J$ denote the projection onto the variables with indices in $J$, that is, $\pi_J(x)=(x_i)_{i\in J}\in[-1,1]^J$ for $x\in [-1,1]^n$.
 Define $g\colon[-1,1]^{\J_1\cap\J_2}\to\R$ by 
 \[g(x):=\min_{y\in\pi_{\J_1\setminus\J_2}(\domain)\subseteq [-1,1]^{\J_1\setminus\J_2}} f_2(x,y)-\frac\varepsilon2, \qquad x\in [-1,1]^{\J_1\cap \J_2}.\]
 
 The function $g$ is Lipschitz continuous on $[-1,1]^{\J_1\cap\J_2}$. To see why, let $x,x'\in [-1,1]^{\J_1\cap\J_2}$ and pick  $y,y'\in\pi_{\J_1\setminus\J_2}(\domain)\subseteq [-1,1]^{\J_1\setminus\J_2}$ minimizing $f_2(x,y)$ and $f_2(x',y')$, respectively. Then 
 \begin{multline*}
  |g(x)-g(x')|=|f_2(x,y)-f_2(x',y')|\\
  \leq \max(|f_2(x,y)-f_2(x',y)|,|f_2(x,y')-f_2(x',y')|)\leq \lip (f_2)|x-x'|,
 \end{multline*}
 where $\lip (f_2)$ denotes the Lipschitz constant of $f_2$ on $[-1,1]^{n}$. 
 
 The function $g$ also satisfies
 \[ f_1+g\geq \frac\varepsilon2 \quad\textrm{and}\quad f_2-g\geq \frac\varepsilon2
 \]
 on $\domain$. The second inequality follows from the definition of $g$, and the first one can be shown taking 
 $(x,y,z)\in \domain$ with  $x\in [-1,1]^{\J_1\cap\J_2}$,  $y\in [-1,1]^{\J_1\setminus \J_2}$, and $z\in[-1,1]^{\J_2\setminus\J_1}$, taking care to pick $y$ only after $x$ has been chosen, in such a way that the minimum is in the definition of $g$ is realized there, that is, $g(x)=f_2(x,y)-\varepsilon/2$ holds (this is possible by compactness of $\domain$ and continuity of $f$); then we have
 \[f_1(x,z)+g(x)=f_1(x,z)+f_2(x,y)-\frac\varepsilon2=f(x,y,z)-\frac\varepsilon2\geq \frac\varepsilon2.\]
  
 For $j\in \J_1\cap \J_2$, let %
 \[m_j= D_{2,2}=\left\lceil\frac{2\Cjackson\,|\J_1\cap\J_2|\lip (f_2)}{\varepsilon-2\eta}\right\rceil.\]
 Set $m_j=0$ for all other $0\leq j\leq n$, and $\mathbf m=(m_1,\dots,m_n)=\bar D_{2,2}$.
 Then Theorem \ref{thm:jackson} gives a polynomial $p_2$ such that
 \[\|g-p_2\|_\infty= \Cjackson\sum_{j\in \J_1\cap\J_2}\frac{\lip_jg}{m_j}\leq \Cjackson|\J_1\cap\J_2|\lip (f_2)\frac2{D_{2,2}}%
 \leq \frac\varepsilon2-\eta.\]
 Also,%
 \begin{equation}\label{eq:fulldeg}
  \fulldeg p_2\leq \mathbf m=\bar D_{2,2}.%
 \end{equation}
 Let 
 \[h_1\coloneqq f_1+p_2\qquad\textrm{and}\qquad h_2\coloneqq f_2-p_2\]
 so that $f=h_1+h_2$, $h_1\geq\eta$ and $h_2\geq \eta$ on $\domain$, and $h_j\in \polysvar{\J_j}{}$. %
 The bound \eqref{eq:entrywise} follows from the definition of $h_j$ and \eqref{eq:fulldeg}. Observe also that, by the last part of Theorem \ref{thm:jackson},
 \[\lip p_2\leq 2\lip g\leq 2\lip(f_2).  \]
 Finally, we have 
 \begin{equation}\label{eq:normhj}
  \|p_2\|_\infty\leq \|g\|_\infty+\frac\varepsilon2-\eta \leq \|f_2\|_\infty+\varepsilon-\eta\leq 2\|f_2\|_\infty,
 \end{equation}
 so
 \begin{equation*}
      \|h_j\|_\infty\leq\|f_j\|_\infty+\|p_2\|_\infty\leq \|f_j\|_\infty+2\|f_2\|_\infty\leq 3(\|f_1\|_\infty+\|f_2\|_\infty).
 \end{equation*}

 For the induction step,  let $\ell\geq 3$ and set $\tilde f=f_1+\dots+f_{\ell-1}-(\ell-2)(\epsilon-\eta)$, so that we have $f-(\ell-2)(\epsilon-\eta)=\tilde f+f_\ell \geq \epsilon$ since $f\geq \varepsilon=(\ell-1)\epsilon-(\ell-2)\eta$.
  The proof for the case $\ell=2$ with $\varepsilon=\varepsilon_{\ell-1}$ gives a polynomial $p_\ell\in\polysvar{\interset_\ell}{}$ such that
 \[ \tilde f%
    -p_\ell\geq  \eta%
 \quad\textrm{and}\quad f_\ell+p_\ell\geq \eta\]
 on $\domain$, and
 with $\fulldeg p_\ell=\bar D_{\ell,\ell}$, $\lip p_\ell\leq 2\lip f_\ell$.and, analogously to \eqref{eq:normhj},
 \begin{equation}\label{eq:norminduction}
     \|p_\ell\|_\infty\leq 2\|f_\ell\|_\infty.
 \end{equation}
Write 
 \[f_1'+\dots+f'_{\ell-1}=f_1+\dots+f_{\ell-1}-p_\ell\] 
 where $f_j'=f_j-p_\ell$ for the largest $j$ with $\interset_\ell\subset \J_j$ (which must happen for some $j$, by the running intersection property; see Definition \ref{def:rip}) and $f_k'=f_k$ for all other $k\neq j$. Thus $f'_j\in \polysvar{\J_j}{}$,
 \begin{equation}\label{eq:lipdeg}
  \fulldeg f_j'\leq \max(\fulldeg f_j,\fulldeg p_\ell)=\max(\fulldeg f_j,\bar D_{\ell,\ell}), \qquad \lip f_j'\leq \lip f_j+\lip p_\ell,
\end{equation}

 The induction hypothesis applies to the polynomial
 \begin{equation*}
     f_1'+\dots+f_{\ell-1}'+(\ell-2)(\epsilon-\eta)=\tilde f+(\ell-2)(\epsilon-\eta)-p_\ell\geq (\ell-2)\epsilon-(\ell-3)\eta.
 \end{equation*}
 This means that there are polynomials $h_1,\dots,h_{\ell-1}$ %
 such that
 \begin{itemize}
  \item $f_1'+\dots+f_{\ell-1}'=f_1+\dots+f_{\ell-1}-p_\ell=h_1+\dots+h_{\ell-1}$,
  \item $h_j\in \polysvar{\J_j}{}$ for all $1\leq j\leq \ell-1$,
  \item $h_j\geq \eta$ for all $1\leq j\leq \ell-1$,
  \item We have, for all $1\leq j\leq \ell-1$,
   \begin{align*}
    \fulldeg h_j&
    \leq \max(\fulldeg f'_j,\bar D_{j,\ell}\dots,\bar D_{\ell-1,\ell})\\
    &\leq  \max(\fulldeg f_j,\fulldeg p_\ell,\bar D_{j,\ell},\dots,\bar D_{\ell-1,\ell})\\
    &= \max(\fulldeg f_j,\bar D_{j,\ell},\dots,\bar D_{\ell,\ell}).
    \end{align*}
    Observe that the second index in each $\bar D_{k,\ell}$ is $\ell$ because of the accumulation of Lipschitz constants resulting from the estimate  \eqref{eq:lipdeg}.
  \item  We have, for all $1\leq j\leq \ell-1$, again because of \eqref{eq:lipdeg},
   \[
    \lip h_j\leq 3\sum_{k=j}^\ell \lip f_k.
   \]
   \item We have, for all $1\leq j\leq \ell-1$, using \eqref{eq:norminduction}, 
    \[ \|h_j\|_\infty \leq 3\times 2^{\ell-2}\sum_{k=1}^{\ell-1} \|f'_k\|_\infty\leq 3\times 2^{\ell-2} \left(\sum_{k=1}^{\ell-1} \|f_k\|_\infty+\|p_\ell\|_\infty\right)\leq 3\times 2^{\ell-1}\sum_{k=1}^\ell\|f_k\|_\infty.\]
 \end{itemize}
 Let $h_\ell=f_\ell+p_\ell$. Then again $f_1+\dots+f_\ell=h_1+\dots+h_\ell$, $h_\ell\in\polysvar{\J_\ell}{}$, $h_\ell\geq \eta$ on $\domain$, $\fulldeg h_j\leq \max(\fulldeg f_\ell,\bar D_{\ell,\ell})$, $\lip h_\ell\leq \lip f_\ell+\lip p_\ell\leq 3\lip f_\ell$,
 \[\|h_\ell\|_\infty\leq \|f_\ell\|_\infty+\|p_\ell\|\leq
3\|f_\ell\|\leq 3\times 2^{\ell-1} \sum_{j=1}^\ell \|f_j\|_\infty,\]
 so the lemma is proven.
\end{proof}

 \section{Proofs}
 \label{sec:proofs}
 
 \subsection{Proof of Theorem {\ref{thm:mainabrv}}}
 \label{sec:proof}

 Theorem \ref{thm:mainabrv} will follow from Theorem \ref{thm:main}, which presents a more detailed bound, together with the definitions of $\overline L,M,\overline J$.

\begin{theorem}\label{thm:main}
  Let $n>0$ and $\ell\geq 2$, and let $\bfr_1,\bfr_2,\dots,\bfr_\ell\in \N^n$, $\bfr_j=(r_{j,1},\dots,r_{j,n})$, 
   be nowhere-vanishing multi-indices. Let also $\J_1,\dots,\J_\ell$ be subsets of $\{1,\dots,n\}$ satisfying the running intersection property.
  Let $p=p_1+p_2+\dots+p_\ell$ be a polynomial that is the sum of finitely many polynomials $p_j\in \polysvar{\J_j}{\bfr_j}$. Then if $p\geq \varepsilon$ on $[-1,1]^n$, we have 
  \[p\in \preordering{\bfr_1}{\J_1}+\dots+\preordering{\bfr_\ell}{\J_\ell}\] 
  as long as, for all $1\leq j\leq \ell$ and all $1\leq i\leq n$,
   \begin{align}  
   (r_{j,k}+2)^2 &\geq\frac{ 2^{\frac{|J_j|}2+2}(\ell+2)\|p\|_\infty n\pi^2}{\varepsilon}\notag\\
   &\qquad\cdot\prod_{1\leq m\leq n}\left(\max\left[(\fulldeg p_j)_m,\max_{\substack{j\leq l\leq \ell\\ m\in \interset_l}}\frac{4\Cjackson(\ell+2)|\interset_l|\sum_{t=l}^\ell\lip p_t}{\varepsilon}
    \right]+2\right)\notag\\
    \label{eq:assumptionepsilon}
    &\qquad\cdot\max_{l\in \J_j}\left[(\fulldeg p_j)_l,\max_{\substack{j\leq q\leq \ell\\ l\in \interset_q}}\frac{4\Cjackson(\ell+2)|\interset_q|\sum_{t=q}^\ell\lip p_t}{\varepsilon}
    \right]^{2} \,,
    \end{align}
 and
 \begin{equation}\label{eq:equivcond}  
  {(r_{j,i}+2)^2}\geq 2\pi^2n\max\left[\max_{1\leq m\leq n}(\fulldeg p_j)_m,\max_{j\leq k\leq \ell}\frac{4\Cjackson(\ell+2)|\interset_k|\sum_{t=k}^\ell\lip p_t}{\varepsilon}
   \right]^2 \,,
 \end{equation}
\end{theorem}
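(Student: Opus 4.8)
The plan is to reduce Theorem~\ref{thm:main} to the single-block statement of Corollary~\ref{coro:puttingtogether}, after first ``straightening'' the sparse decomposition $p=p_1+\dots+p_\ell$ into one whose summands are already nonnegative on the box $[-1,1]^n$. First I would fix the auxiliary parameters of Lemma~\ref{lem:approx} as
\[
\epsilon=\frac{(3\ell+10)\,\varepsilon}{2\ell(\ell+2)},\qquad \eta=\frac{(\ell+5)\,\varepsilon}{2\ell(\ell+2)},
\]
which satisfy $\varepsilon=(\ell-1)\epsilon-(\ell-2)\eta$ and $\epsilon>2\eta$, and --- the point of the choice --- $\epsilon-2\eta=\varepsilon/(2(\ell+2))$. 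This last identity makes the quantities $2\Cjackson/(\epsilon-2\eta)$ occurring in the definition~\eqref{eq:defD} of the $D_{l,m}$ equal to the constants $4\Cjackson(\ell+2)/\varepsilon$ that appear inside the brackets of \eqref{eq:assumptionepsilon} and \eqref{eq:equivcond}. Applying Lemma~\ref{lem:approx} with $\domain=[-1,1]^n$ then produces $p=h_1+\dots+h_\ell$ with $h_j\in\polysvar{\J_j}{}$, $h_j\geq\eta$ on $[-1,1]^n$, the entrywise bound \eqref{eq:entrywise} on $\fulldeg h_j$, $\lip h_j\leq 3\sum_{k=j}^\ell\lip p_k$, and --- via the Remark following that lemma, since $0\leq h_j\leq p$ on $[-1,1]^n$ --- $\|h_j\|_\infty\leq\|p\|_\infty$.

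The second step is degree bookkeeping. From $\lceil t\rceil\leq t+1$ and $\epsilon-2\eta=\varepsilon/(2(\ell+2))$ one gets $D_{l,\ell}\leq 4\Cjackson(\ell+2)|\interset_l|\big(\sum_{t=l}^\ell\lip p_t\big)/\varepsilon+1$, so by \eqref{eq:entrywise} each entry $(\fulldeg h_j)_k$ is bounded by the bracketed quantity
\[
\max\!\left[(\fulldeg p_j)_k,\ \max_{\substack{j\leq l\leq\ell\\ k\in\interset_l}}\frac{4\Cjackson(\ell+2)|\interset_l|\sum_{t=l}^\ell\lip p_t}{\varepsilon}\right]
\]
of \eqref{eq:assumptionepsilon}--\eqref{eq:equivcond}, up to an additive constant that is absorbed into the ``$+2$'' appearing there, and $(\fulldeg h_j)_k=0$ for $k\notin\J_j$. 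Assumption \eqref{eq:equivcond} then gives $(r_{j,k}+2)^2\geq 2\pi^2 n\,(\fulldeg h_j)_k^2$ for all $k$, which at once forces $\fulldeg h_j\leq\bfr_j$, so that $h_j\in\polysvar{\J_j}{\bfr_j}$, and is exactly the condition \eqref{eq:effcond} for $\bfr_j$ relative to the monomials of $h_j$ --- i.e.\ the hypothesis under which $\jkernelop{\bfr_j}$ is invertible and well-behaved on such polynomials.

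The third step applies Corollary~\ref{coro:puttingtogether} block by block. For each $j$, set
\[
q_j:=\frac{h_j-\eta}{\|h_j\|_\infty}\in\polysvar{\J_j}{\bfr_j},\qquad \varepsilon_j':=\frac{\eta}{\|h_j\|_\infty},
\]
so that $q_j+\varepsilon_j'=h_j/\|h_j\|_\infty$ and $0\leq q_j\leq 1$ on $[-1,1]^n$ because $\eta\leq h_j\leq\|h_j\|_\infty$, and $\indexset{q_j}\subseteq\indexset{h_j}\cup\{0\}$ so that $\hamming(I)\leq|\J_j|$ for all $I\in\indexset{q_j}$. Using $2^{\hamming(I)/2}\leq 2^{|\J_j|/2}$, the degree bounds of the previous step, $\|h_j\|_\infty\leq\|p\|_\infty$, and $\varepsilon_j'=\eta/\|h_j\|_\infty\geq\varepsilon/(2(\ell+2)\|p\|_\infty)$ (which is where the $\ell+2$ prefactor comes from), one checks that \eqref{eq:assumptionepsilon} says exactly that $\varepsilon_j'$ dominates $2n\pi^2\big(\prod_{k}((\fulldeg q_j)_k+1)\big)\max_{I\in\indexset{q_j}}\big[2^{\hamming(I)/2}\max_k i_k^2/(r_{j,k}+2)^2\big]$, the lower bound on the additive constant required by Corollary~\ref{coro:puttingtogether}. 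That corollary then yields $q_j+\varepsilon_j'\in\preordering{\bfr_j}{\J_j}$, and since this set is a convex cone stable under multiplication by the scalar $\|h_j\|_\infty\geq 0$, we obtain $h_j=\|h_j\|_\infty\,(q_j+\varepsilon_j')\in\preordering{\bfr_j}{\J_j}$. Summing over $j$ gives $p=h_1+\dots+h_\ell\in\preordering{\bfr_1}{\J_1}+\dots+\preordering{\bfr_\ell}{\J_\ell}$, which is the assertion of the theorem.

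I expect the real difficulty to be concentrated in the constant-tracking of the third step: one has to shepherd the Hamming factor $2^{\hamming(I)/2}\leq 2^{|\J_j|/2}$, the product $\prod_m((\fulldeg h_j)_m+1)$, the factor $\ell+2$ coming from $\varepsilon_j'\gtrsim\varepsilon/((\ell+2)\|p\|_\infty)$, and the ceilings hidden in $D_{l,\ell}$ all the way through the estimates, while keeping $\fulldeg h_j$ supported on $\J_j$ via the truncation built into \eqref{eq:entrywise}, so that the two separate hypotheses \eqref{eq:equivcond} (which must reproduce \eqref{eq:effcond}) and \eqref{eq:assumptionepsilon} (which must reproduce the additive-constant bound of Corollary~\ref{coro:puttingtogether}) come out exactly. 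Everything else is routine manipulation on top of Lemmas~\ref{lem:propsK} and \ref{lem:approx} and Corollary~\ref{coro:puttingtogether}.
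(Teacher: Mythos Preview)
Your proposal is correct and follows essentially the same route as the paper: apply Lemma~\ref{lem:approx} (with $\domain=[-1,1]^n$) to rewrite $p$ as a sum of nonnegative pieces $h_j$, normalize each $h_j$ so that Corollary~\ref{coro:puttingtogether} applies, and track constants so that the hypotheses \eqref{eq:assumptionepsilon}--\eqref{eq:equivcond} imply both \eqref{eq:effcond} and the additive-constant bound of that corollary. The only differences are cosmetic: the paper chooses $\eta=\varepsilon/(2(\ell+2))$ directly (rather than your pair $(\epsilon,\eta)$, which gives the same $\epsilon-2\eta$), and it normalizes via $H_j=(h_j-\min h_j)/(\max h_j-\min h_j)$ instead of your $q_j=(h_j-\eta)/\|h_j\|_\infty$; both reductions land the polynomial in $[0,1]$ on the box and use the Remark after Lemma~\ref{lem:approx} to bound the scaling factor by $\|p\|_\infty$.
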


 \begin{proof}[Proof of Theorem \ref{thm:main}] 
  Let 
   \begin{equation}\label{eq:choiceeta}
    \epsilon= \frac{\varepsilon+(\ell-2)\eta}\ell\qquad\textrm{and}\qquad\eta=\frac{\varepsilon}{2(\ell+2)}
   \end{equation}
  and apply Lemma \ref{lem:approx} (with $\mathbf g=0$, so that $\domain=[-1,1]^n$) to get polynomials $h_1,\dots,h_\ell$ with $h_j\in \polysvar{J_j}{}$, $p=h_1+\dots+h_\ell$ and, for all $1\leq j\leq \ell$, $h_j\geq \eta$ on $[-1,1]^n$ and
  \[\fulldeg h_j\leq \max(\fulldeg p_j, \bar D_j,\bar D_{j+1},\dots,\bar D_\ell),\]
  where $\bar D_l:=(\delta_{j\in \interset_l}D_l)_{j=1}^\ell$, $\delta_{j\in \interset_l}$ equals $1$ if $j\in \interset_l$ and $0$ otherwise, and
  \[D_l:=\left\lceil\frac{2\Cjackson\,|\interset_l|\sum_{t=l}^\ell\lip p_t}{\varepsilon_{l-1}-2\eta}\right\rceil=\left\lceil \frac{4\Cjackson\,\ell\,|\interset_l|\sum_{t=l}^\ell\lip p_t}{\eta(\ell+2)}\right\rceil,\qquad l=2,3,\dots,\ell,\]
  (since $\varepsilon_j-2\eta=\eta(\ell+2)/2\ell$) and $D_1:=D_2$. Thus $|\bar D_l|=|\interset_l|D_l$ for $2\leq l\leq \ell$.

  Apply Corollary \ref{coro:puttingtogether} to each of the polynomials
  \[H_j=\frac{h_j-\min_{[-1,1]^n}h_j}{\max_{[-1,1]^n}h_j-\min_{[-1,1]^n}h_j}\]
  to see that, for

  \begin{equation}\label{eq:lowerbound}
   \varepsilon_j\geq 2n\pi^2\left(\prod_{1\leq k\leq n}\left((\fulldeg H_j)_k+1\right)\right)\max_{\multiindex\in\indexset{H_j}}\left(2^{\hamming(I)/2}\max_{1\leq k\leq n}\frac{i_k^2}{(r_{j,k}+2)^2}\right),
  \end{equation}
  (recall that $\indexset{H_j}$ is the set of multiindices $I=(i_1,
  \dots,i_n)$ corresponding to exponents of $x_1,
  \dots,x_n$ in the terms appearing in $H_j$ and $\hamming(I)$ is the number of nonzero entries in $I$)
    we have 
    \begin{equation}\label{eq:belong}
     H_j+\epsilon_j\in \preordering{\bfr_j}{\J_j};
    \end{equation}
    when applying the corollary, note that \eqref{eq:equivcond} {implies} \eqref{eq:effcond} in this case {because, if $I=(i_1,\dots,i_n)\in \indexset{H_j}$, then
  \begin{multline*}
    i_k\leq (\fulldeg H_j)_k\leq (\fulldeg h_j)_k \leq\max(\fulldeg p_j,\bar D_j,\dots,\bar D_\ell)\\
    \leq \max\left[\max_{1\leq m\leq n}(\fulldeg p_j)_m,\max_{j\leq k\leq \ell}\frac{4\Cjackson(\ell+2)|\interset_k|\sum_{t=k}^\ell\lip p_t}{\varepsilon}\right],
  \end{multline*}
  by the definition of $\bar D_l$.}
  Observe that \eqref{eq:belong} means also that
  \begin{equation}\label{eq:histhere}
   h_j-\min_{[-1,1]^n}h_j+\epsilon_j\left(\max_{[-1,1]^n}h_j-\min_{[-1,1]^n}h_j\right)\in\preordering{\bfr_j}{\J_j}.
  \end{equation}
  Note that we have $\fulldeg H_j=\fulldeg h_j$, $\indexset{H_j}\setminus\indexset{h_j}=\emptyset$, and  $\indexset{h_j}\setminus\indexset{H_j}\subseteq\{(0,\dots,0)\}$ since the powers of all terms in $h_j$ and in $H_j$ are the same, with the only possible exception of the constant term, which may appear in one of these and vanish in the other.
  Now, going back to our choice \eqref{eq:choiceeta} of $\eta$ and using \eqref{eq:assumptionepsilon}, we have %
  \begin{align*}
   \eta&=\frac{\varepsilon}{2(\ell+2)} \\
   &\geq \|p\|_\infty n\pi^2 \left(\max\left[\max_{1\leq m\leq n}(\fulldeg p_j)_m,\max_{j\leq k\leq \ell}\frac{2\Cjackson\,|\interset_k|\sum_{t=k}^\ell\lip p_t}{\eta}
   \right]+2\right)^{|J_j|+2}\\
   &\qquad \cdot
   \frac{2^{\frac{|J_j|}2+1}}{\min_{1\leq k\leq n}(r_{j,k}+2)^2}\\   
   &\geq \|p\|_\infty n\pi^2\left(\max\left[\max_{1\leq m\leq n}(\fulldeg p_j)_m,\max_{j\leq k\leq \ell}\left\lceil\frac{2\Cjackson\,\ell\,|\interset_k|\sum_{t=k}^\ell\lip p_t}{\eta(\ell+2)}\right\rceil
   \right]+1\right)^{|J_j|+2}\\
   &\qquad \cdot
   \frac{2^{\frac{|J_j|}2+1}}{\min_{1\leq k\leq n}(r_{j,k}+2)^2}.
   \end{align*}
  for all $j \in \{1,\ldots,\ell\}$. Notice that after separating two of the $|J_j|+2$ terms in the product and removing the $+1$ factor from them, we obtain
   \begin{align*}
   \eta &\geq \|p\|_\infty n\pi^2\left(\max\left[\max_{1\leq m\leq n}(\fulldeg p_j)_m,\max_{j\leq k\leq \ell}\left\lceil\frac{2\Cjackson\,\ell\,|\interset_k|\sum_{t=k}^\ell\lip p_t}{\eta(\ell+2)}\right\rceil
   \right]+1\right)^{|J_j|}
   2^{\frac{|J_j|}2+1}\\
   &\qquad\cdot\frac{\max\left[\max_{1\leq m\leq n}(\fulldeg p_j)_m,\max_{j\leq k\leq \ell}\left\lceil\frac{2\Cjackson\,\ell\,|\interset_k|\sum_{t=k}^\ell\lip p_t}{\eta(\ell+2)}\right\rceil
   \right]^2}{\min_{1\leq k\leq n}(r_{j,k}+2)^2}\\
   &\geq \|p\|_\infty n\pi^2\left(%
   \prod_{k\in \J_j}(\left[\max\left(\fulldeg p_j,\bar D_j,\dots,\bar D_\ell\right)\right]_k+1)\right)%
   2^{\frac{|J_j|}2+1}\\
   &\qquad\cdot\frac{\max_{%
   1\leq k\leq n%
   }\left[\max\left(\fulldeg p_j,\bar D_j,\dots,\bar D_\ell\right)\right]_k^2}{\min_{1\leq k\leq n}(r_{j,k}+2)^2} ,
   \end{align*}
   where we have used the definition of $\bar D_l$, as well as the fact that each factor has been replaced by one that is smaller or equal, the original expression containing the maximum of them on each factor. Next, use  $\fulldeg H_j\leq \max(\fulldeg p_j,\bar D_j,\dots, \bar D_\ell)$ as well as $\hamming(I)\leq |J_j|$ for every multi-index $I$ in $\indexset{H_j}$, which is true because $H_j\in\polysvar{J_j}{}$, yielding
   \begin{align*}
   \eta &\geq \|p\|_\infty \left(2n\pi^2\left(\prod_{1\leq k\leq n}\left(\left(\fulldeg H_j\right)_k+1\right)\right)\max_{\multiindex\in\indexset{H_j}}\left(2^{\hamming(I)/2}\max_{1\leq k\leq n}\frac{i_k^2}{(r_{j,k}+2)^2}\right)\right)\\
   &\geq \left(\max_{[-1,1]^n}h_j-\min_{[-1,1]^n}h_j\right)\\
   &\qquad \cdot\left(2n\pi^2\left(\prod_{1\leq k\leq n}\left((\fulldeg H_j)_k+1\right)\right)\max_{\multiindex\in\indexset{H_j}}\left(2^{\hamming(I)/2}\max_{1\leq k\leq n}\frac{i_k^2}{(r_{j,k}+2)^2}\right)\right),
  \end{align*}
since we have $\max_{[-1,1]^n}h_j-\min_{[-1,1]^n}h_j\leq \|p\|_\infty$.
 With this bound for $\eta$, together with the fact that $\min_{[-1,1]^n}h_j\geq \eta$, we get 
 \begin{align*}
  h_j&\geq h_j-\min_{[-1,1]^n}h_j+\eta\\
  &\geq h_j-\min_{[-1,1]^n}h_j +\left(\max_{[-1,1]^n}h_j-\min_{[-1,1]^n}h_j\right)\\
  &\qquad \cdot \left(2n\pi^2\left(\prod_{1\leq k\leq n}\left((\fulldeg H_j)_k+1\right)\right)\max_{\multiindex\in\indexset{H_j}}\left(2^{\hamming(I)/2}\max_{1\leq k\leq n}\frac{i_k^2}{(r_{j,k}+2)^2}\right)\right),
 \end{align*}
 so that, by \eqref{eq:lowerbound} and \eqref{eq:histhere}, $h_j\in \preordering{\bfr}{\J_j}$ and hence 
 \[p=h_1+\dots+h_\ell\in \preordering{\bfr}{\J_1}+\dots+\preordering{\bfr}{\J_\ell}.\qedhere\]
 \end{proof}

\subsection{{Proof of Theorem \ref{thm:putinar}}}
\label{sec:putinarproof}

\emph{Overview.} For this proof, we will first use the sparse approximation theory developed in Section \ref{sec:approximation} to represent the sparse polynomial $p$ as a sum of positive polynomials $h_1+\dots+h_{\ell}$, each of them depending on a clique of variables $J_j$. We then  work with each of these polynomials $h_j$ using the tools developed by Baldi--Mourrain \cite{baldi2021moment} to write $h_j=\f_j+\hat q_j$, where  $\hat q_j$ is by construction obviously an element of the corresponding quadratic module, and $\f_j$ is strictly positive on $[-1,1]^n$. Thus Corollary \ref{coro:puttingtogether} can be applied to $\f_j$, which shows that it belongs to the preordering, and then one argues (also following the ideas of \cite{baldi2021moment}) that the preordering is contained in the quadratic module, hence giving that $\f_j$ is contained in the latter as well. In sum, this shows that $h_j$ is in the quadratic module, which is what want. Most of the heavy lifting goes to estimating the minimum of $\f_j$ to justify the application of Corollary \ref{coro:puttingtogether}. 

\begin{proof}[Proof of Theorem \ref{thm:putinar}]
For each $j=1,\dots,
 \ell$, pick $\Call>0$  such that the following two bounds are satisfied:
  \begin{align}\label{eq:Callcond1}
  \Call &\geq 2\pi^2|J_j|^{1+\frac{16\lojL_j}3}\Cd^2 \Cjackson^{\frac{16\lojL_j}3}2^{1+2(4+3\frac{8}3)\lojL_j}3^{\frac{(16+8\ell)\lojL_j+2}3}\cardg^{-\frac23}\lojC_j^{\frac83}(\max_{i\in K_j}\deg g_i)^2(2(\ell+2))^{8\lojL_j},\\
 \Call&\geq \Cf(\Cjackson\Cm)^{(2\lojL_j+|J_j|+2)(1+\frac{8\lojL_j}3)} |J_j|\pi^22^{
 4\lojL_j+\frac{|J_j|}2+1+(1+\frac{4\lojL_j+1}3)(2\lojL_j+|J_j|+2)(1+\frac{8\lojL_j}3)
  }\nonumber\\
  &\qquad\times 3^{
   \ell(\lojL_j+1)+(2\lojL_j+|J_j|+2)(1+\frac{8\lojL_j}3)
  }(\ell+2)^{1+\lojL_j+\frac{4\lojL_j+1}3(2\lojL_j+|J_j|+2)(1+\frac{8\lojL_j}3)}\cardg\nonumber\\
  &\qquad\times \lojC_j^{ 1+ \frac34(2\lojL_j + |J_j|+2)(1+\frac{8\lojL_j}3)} \left(\sum_{i=j}^\ell|\interset_i|^{2(2\lojL_j+|J_j|+2)(1+\frac{8\lojL_j}3)}\right)\nonumber\\
& \qquad \cdot (\max_{k\in K_j}\deg g_k+1)^{(2\lojL_j+|J_j|+2)(1+\frac{8\lojL_j}3)}.\label{eq:Callcond2}
  \end{align}
  Note that these only depend on $\mathbf g$ and  $J_1,\dots,J_\ell$.

Apply Lemma \ref{lem:approx} to $f=p$, $f_i=p_i$, $\epsilon=3\varepsilon/2\ell$, $\eta=\varepsilon/2(\ell+2)$ to get polynomials $h_1,\dots, h_\ell$ such that 
\begin{equation}\label{eq:starstar}
p=h_1+\dots+h_\ell,\qquad h_i\in \polysvar{\J_i}{},\qquad h_i(x)\geq \eta=\frac{\varepsilon}{2(\ell+2)} \;\text{for}\;x\in\domain,
\end{equation}
and 
\begin{equation}\label{eq:degh}
 \fulldeg h_i\leq \max(\fulldeg p_i,\bar D_{i,\ell},\dots,\bar D_{\ell,\ell})_{J_i}. 
\end{equation}

 In the dense setting, Baldi--Mourrain \cite{baldi2021moment} construct a family of single-variable polynomials \[(\bmh_{t,m})_{(t,m)\in \N\times\N}\] providing useful approximation properties that we have adapted to the (separated-variables) sparse setting and collected in Lemma \ref{lem:bmproperties}. 
 To state this, we set, for all $j=1,\dots,\ell$ and for $(t_j,m_j)\in\N\times \N$ as well as for $s_j>0$,
\begin{gather}
 \label{eq:defq} q_{j, t_j, m_j}(x):=\sum_{i\in K_j}\bmh_{t_i,m_i}\left(g_i(x)\right)^2g_i(x),\\
 \label{eq:deff} f_{j,s_j, t_j, m_j}(x):=h_j(x)-s_j \,q_{j, t_j,m_j}(x).
\end{gather}
Let us give an idea of what these functions do. The single-variable polynomial $\bmh_{t_j,m_j}$ is of degree $m_j$ and roughly speaking approximates the function that equals 1 on $(-\infty,0)$ and $1/{t_j}$ elsewhere.  Thus $q_{j,t_j,m_j}$ almost vanishes (for large $t_j$) on $S(\mathbf g_{K_j})$, and outside of this domain it is roughly a sum of multiples of the negative parts of $\mathbf g_{K_j}$'s  entries.
The definition of $f_{j,s_j,t_j,m_j}$ is engineered to obtain a polynomial that is almost equal to $h_j$ in $S(\mathbf g_{K_j})$ yet remains positive throughout $[-1,1]^n$. Instead of going into the details of the construction, we record the properties we need in the following lemma.
\begin{lemma}[{a version of \cite[Props. 2.13, 3.1, and 3.2, Lem. 3.5]{baldi2021moment}}]\label{lem:bmproperties}
 Assume \eqref{eq:gsizeassumption} and the Archimedean conditions \eqref{eq:archimedeanity} are satisfied. Then for each $j=1,\dots,\ell$ there are values $s_j,t_j, m_j$ of the parameters involved in Definition \eqref{eq:defq} and Definition \eqref{eq:deff}, such that the following holds with the shorthands
 \begin{equation}\label{eq:fhat}
     \f_j=f_{j,s_j,t_j,m_j}\qquad\text{and}\qquad \hat q_j=s_jq_{j,t_j,m_j}:
 \end{equation}
 \begin{enumerate}[ref=(\roman*),label=\roman*.]
  \item \label{it:minf} \cite[Prop.~3.1]{baldi2021moment} gives 
  \[\f_j(x)\geq \frac12\min_{y\in S(\mathbf g_{K_j})}h_j(y)\geq \frac{\eta}2=\frac{\varepsilon}{4(\ell+2)}\quad\text{for all}\quad x\in [1,1]^n.\]
  \item\label{it:degQ} We have $\hat q_j\in \quadraticmoduleg{\mathbf r}{\mathbf g}{J_j}$ for all multi-indices $\mathbf r=(r_1,\dots,r_n)$ with
  \[r_i\geq (2m_{j}+ 1)\max_{k\in K_j}(\fulldeg g_k)_i.\]
  \item \label{it:asymptm} \cite[eq.~$(20)$]{baldi2021moment} gives the existence of a constant $\Cm>0$ such that %
   \[ m_{j}\leq \Cm\lojC_j^{\frac43}\cardg^{\frac13}2^{4\lojL_j}( 
   \deg h_j%
   )^{\frac{8\lojL_j}3}\left(\frac{\min_{x\in \domain}h_j(x)}{\|h_j\|_\infty}\right)^{-\frac{4\lojL_j+1}3}.\]
  \item \label{it:sizef} \cite[eq.~$(16)$]{baldi2021moment} gives the existence of a constant $\Cf>0$ such that
  \[\|\f_{j}\|_\infty\leq \Cf\|h_j\|_\infty2^{3\lojL_j}\cardg \lojC_j(\deg h_j)^{2\lojL_j}\left(\frac{\min_{x\in \domain}h_j(x)}{\|h_j\|_\infty}\right)^{-\lojL_j}.\]
  \item \label{it:degf} \cite[eq.~$(17)$]{baldi2021moment} gives the existence of a constant $\Cd>0$ such that 
  \[\deg \f_{j}\leq \Cd2^{4\lojL_j}\cardg^{\frac13}\lojC_j^{\frac43}(\max_{i\in K_j}\deg g_i)(\deg h_j)^{\frac{8\lojL_j}3}\left(\frac{\min_{x\in \domain}h_j(x)}{\|h_j\|_\infty}\right)^{-\frac{4\lojL_j+1}3}.\]
 \end{enumerate}
\end{lemma}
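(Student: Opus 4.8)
}
The plan is to carry out, separately for each clique index $j\in\{1,\dots,\ell\}$, a single application of the Baldi--Mourrain construction \cite{baldi2021moment}, performed entirely in the variables $x_{J_j}$. The starting observation is that $h_j$ and every $g_i$ with $i\in K_j$ lie in $\polysvar{J_j}{}$, and that the Archimedean identity \eqref{eq:archimedeanity} forces $\sum_{i\in J_j}x_i^2\le 1$ on $S(\mathbf g_{K_j})$, so that, read in the $x_{J_j}$ coordinates, $S(\mathbf g_{K_j})$ lies inside the Euclidean unit ball, and a fortiori inside $[-1,1]^{|J_j|}$. Hence the triple $\big(h_j,\{g_i\}_{i\in K_j},J_j\big)$ is exactly the input of an ordinary (dense) polynomial optimization problem in $|J_j|$ variables, with $|K_j|\le\cardg$ constraints satisfying \eqref{eq:gsizeassumption}, on a domain on which $h_j\ge\eta$ by Lemma \ref{lem:approx}, and with \L{}ojasiewicz data $\lojC_j,\lojL_j$. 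I would then take $\bmh_{t,m}$ to be the univariate polynomials constructed in \cite[\S 3]{baldi2021moment}, define $q_{j,t_j,m_j}$ and $f_{j,s_j,t_j,m_j}$ via \eqref{eq:defq}--\eqref{eq:deff}, and let $s_j,t_j,m_j$ be the parameter values selected in \cite{baldi2021moment} for this $|J_j|$-variable instance.

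With this dictionary, items \ref{it:minf}, \ref{it:asymptm}, \ref{it:sizef} and \ref{it:degf} become essentially verbatim transcriptions of \cite[Prop.~3.1, eq.~(20), eq.~(16), eq.~(17)]{baldi2021moment} applied to the sub-problem above: one simply replaces their ambient dimension by $|J_j|$, their number of constraints by $|K_j|\le\cardg$, and their \L{}ojasiewicz exponent and constant by $\lojL_j$ and $\lojC_j$ (item \ref{it:degf} is already phrased with the total degree $\deg$, so nothing has to be refined there). The only item needing a genuinely sparse argument is \ref{it:degQ}: since $g_i\in\polysvar{J_j}{}$ for $i\in K_j$, also $\bmh_{t_i,m_i}(g_i)\in\polysvar{J_j}{}$, so $\bmh_{t_i,m_i}(g_i)^2\in\sos{J_j}$ and $\fulldeg\big(\bmh_{t_i,m_i}(g_i)^2\,g_i\big)\le(2m_j+1)\,\fulldeg g_i$ entry-wise; summing over $i\in K_j$ and multiplying by $s_j>0$ exhibits $\hat q_j$ as an element of $\quadraticmoduleg{\mathbf r}{\mathbf g}{J_j}$ as soon as $r_k\ge(2m_j+1)\max_{i\in K_j}(\fulldeg g_i)_k$ for every $k$.

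The step I expect to be the real obstacle is not any single estimate but the bookkeeping required to certify that the entire chain of inequalities in \cite[\S 3]{baldi2021moment} depends on the ambient dimension only through the clique size $|J_j|$ --- that is, that substituting $|J_j|$ for $n$ is legitimate at every line and that the absolute constants $\Cm,\Cf,\Cd$ can be taken independent of $n$. This is expected to work because $h_j$ and $\mathbf g_{K_j}$ see only the variables $x_{J_j}$, and because the Jackson/Chebyshev ingredients of Section \ref{sec:jacksonkernel} have already been developed in the variable-separated form of Lemma \ref{lem:propsK}; but it demands a careful, line-by-line pass through \cite{baldi2021moment}. A milder point to watch is that the degree estimates of \cite{baldi2021moment} control \emph{total} degree, whereas item \ref{it:degQ} --- and, downstream, the truncation indices $\bfr_j+\twos$ in Theorem \ref{thm:putinar} --- are phrased with $\fulldeg$; one passes between the two using $g_i\in\polysvar{J_j}{}$ together with $\deg(\cdot)\le|\fulldeg(\cdot)|$, which is harmless since $h_j$ already involves at most $|J_j|$ variables.
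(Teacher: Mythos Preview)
Your proposal is correct and follows the same approach as the paper: items \ref{it:minf}, \ref{it:asymptm}, \ref{it:sizef}, \ref{it:degf} are obtained by invoking the cited results of \cite{baldi2021moment} (the paper simply says ``the proofs of the other items can be found in the indicated sources''), and item \ref{it:degQ} is deduced from $\deg\bmh_{t_j,m_j}=m_j$ together with the definition \eqref{eq:defq}, exactly as you argue. Your extra care in spelling out why the Baldi--Mourrain construction applies clique-wise and your remark on the dimension-independence of $\Cm,\Cf,\Cd$ go beyond what the paper makes explicit, but are consistent with its intent.
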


 Item \ref{it:degQ} follows
 \footnote{This calculation is slightly different to the one in \cite[Lem. 3.5]{baldi2021moment} because the definition of $q_{j,t_j,m_j}$ (or in their notations, $f-p$) differs from the one given there in that the functions $\bmh_j$ are squared here, an idea we take from the exposition of the results of \cite{baldi2021moment} in the dissertation of L. Baldi 
 and that is advantageous because then $q_{j,t_j,m_j}\in \quadraticmoduleg{\bfr}{\mathbf g}{J_j}$ automatically. This requires taking $m_j$ twice as large, and we absorb this difference into the constant $\Cm$.} from $\deg \bmh_{t_j,m_j}=m_j$ and the definition of $q_{j,t_j,m_j}$.
 The proofs of the other items can be found in the indicated sources.

 Take $s_j,t_j,m_j,\f_j, \hat q_j$ for $j=1,\dots,\ell$ satisfying the properties (i)-(v) collected in Lemma \ref{lem:bmproperties}. 
Continuing with the proof of Theorem \ref{thm:putinar}, denote 
 \begin{equation*}
  F_j\coloneqq\frac{\f_{j}-\min_{[-1,1]^n}   \f_{j}}{\max_{[-1,1]^n} \f_{j}-\min_{[-1,1]^n} \f_{j}}.
  \end{equation*}
Since $\f_{j}\geq \varepsilon/4(\ell+2)$ on $[-1,1]^n$, we may apply Corollary \ref{coro:puttingtogether} with $p=F_{j}$ to get that 
  \begin{equation}\label{eq:Fjbelongs}
  F_j+\epsilon_j\in \preordering{\bfr_j}{\J_j}
  \end{equation}
 as long as 
  \begin{equation}\label{eq:belongingcondition}
   \epsilon_j\geq 2|J_j|\pi^2\left(\prod_{i\in J_j}((\fulldeg F_j)_i+1)\right)\max_{I=(i_1,\dots,i_n)\in \indexset{F_j}}\left(2^{\hamming(I)/2}\max_{1\leq k\leq n}\frac{i_k^2}{(r_{j,k}+2)^2}\right)
  \end{equation}
 and \eqref{eq:effcond} are verified. 
 In this context, the condition \eqref{eq:effcond} required in Corollary \ref{coro:puttingtogether} is equivalent to the theorem's assumption \eqref{eq:putinarcond2}; let us show how this works: First, using $i_k\leq (\fulldeg F_j)_k$, $\fulldeg F_j\leq \fulldeg \f_j\leq (\deg \f_j)\ones$, Lemma \ref{lem:bmproperties}\ref{it:degf}, we get
 \begin{align*}
  2\pi^2 |J_j|i_k^2
  &\leq 2\pi^2|J_j|\left(\fulldeg F_j\right)^2_k\leq 2\pi^2|J_j|(\deg \f_j)^2\\
   &\leq 2\pi^2|J_j|\left(\Cd 2^{4\lojL_j}\cardg^{\frac13}\lojC_j^{\frac43}(\max_{i\in K_j}\deg g_i)(\deg h_j)^{\frac{8\lojL_j}3}\left(\frac{\min_{x\in S(\mathbf g)}h_j(x)}{\|h_j\|_\infty}\right)^{-\frac{4\lojL_j+1}{3}}\right)^2.
   \end{align*}
   Now use equation  \eqref{eq:degh}  to get that this is
   \begin{align*}
   &\leq 2\pi^2|J_j| \\
   & \qquad \cdot \left(\Cd 2^{4\lojL_j}\cardg^{\frac13}\lojC_j^{\frac43}(\max_{i\in K_j}\deg g_i)\max\left(\fulldeg p_j,\bar D_{j,\ell},\dots,\bar D_{\ell,\ell}\right)^{\frac{8\lojL_j}3}\left(\frac{\min_{x\in S(\mathbf g)}h_j(x)}{\|h_j\|_\infty}\right)^{-\frac{4\lojL_j+1}{3}}\right)^2
   \\
  &\leq 2\pi^2|J_j|\left(\Cd 2^{4\lojL_j}\cardg^{\frac13}\lojC_j^{\frac43}(\max_{i\in K_j}\deg g_i)\max\left(\fulldeg p_j,\bar D_{j,\ell},\dots,\bar D_{\ell,\ell}\right)^{\frac{8\lojL_j}3}\left(\frac{3^\ell\sum_i\|p_i\|_\infty%
  }{\varepsilon/2(\ell+2)}\right)^{\frac{4\lojL_j+1}3}\right)^2,
  \end{align*}
  where we have also used  the fact that 
  \begin{equation}\label{eq:boundhj}\|h_j\|_\infty\leq3\times 2^{\ell-1}\sum_{i=1}^\ell\|p_i\|_\infty\leq 3^\ell\sum_{i=1}^\ell\|p_i\|_\infty
  \end{equation}
  and the last estimate from  \eqref{eq:starstar}.
  Next, use \eqref{eq:defD},
   $|\interset_j|\leq |J_j|$, $\varepsilon_i-2\eta_i=\frac{\ell+6}{2\ell(\ell+2)}\varepsilon$ to get
  \begin{align*}
 2\pi^2|J_j|i^2_k &\leq 2\pi|J_j|%
  \left( \Cd2^{4\lojL_j}\cardg^{-\frac13}\lojC_j^{\frac43}(\max_{i\in K_j}g_i)(\deg p_j)^{\frac{8\lojL_j}3}\left(\frac{2\Cjackson|J_j|(3\sum_{i=1}^\ell\lip p_i)}{\frac{\ell+6}{2\ell(\ell+2)}\varepsilon}\right)^{\frac{8\lojL_j}3}\right. \\
  &\qquad \cdot \left. \left(\frac{
  3^\ell\sum_i\|p_i\|_\infty%
  }{\varepsilon/2(\ell+2)}\right)^{\frac{4\lojL_j+1}3}\right)^2
  \\
  &\leq \Call\left(\left(\sum_{i=1}^\ell\|p_i\|_\infty%
  \right)^{\frac{4\lojL_j+1}3}(\deg p_j)^{\frac{8\lojL_j}3}\frac{\left(\sum_{i=1}^{\ell}\lip p_i\right)^{\frac{8\lojL_j}3}}{\varepsilon^{\frac{(8+4)\lojL_j+1}3}}\right)^2\leq (r_{j,k}+2)^2,
 \end{align*}
 where we have additionally used  equation \eqref{eq:Callcond1} and our assumption \eqref{eq:putinarcond2}; this is precisely \eqref{eq:effcond}.

 We would next like to show that 
 \begin{equation} \label{eq:fhatpreordering}
  \f_{j}\in \preordering{\bfr_j}{\J_j}.
 \end{equation} 
 Let us first explain why this will be enough to prove the theorem.
 Once we have \eqref{eq:fhatpreordering}, by Lemma \ref{lem:archimedean} $\f_j$ is also contained in $\quadraticmoduleg{\mathbf r_j+\mathbf 2}{1-\sum_{i\in J_j}x_i^2}{J_j}$, and it is our assumption \eqref{eq:archimedeanity} that $\quadraticmoduleg{\mathbf r_j+\mathbf 2}{1-\sum_{i\in J_j}x_i^2}{J_j}\subseteq \quadraticmoduleg{\bfr_j+\mathbf 2}{\mathbf g_{K_j}}{J_j}$. In other words, we have
  \[ \f_j\in\quadraticmoduleg{\bfr_j+\mathbf 2}{\mathbf g_{K_j}}{J_j}.\]
  By Lemma \ref{lem:bmproperties}\ref{it:degQ}, $\hat q_{j}$ also belongs to $\quadraticmoduleg{\bfr_j+\mathbf 2}{\mathbf g_{K_j}}{J_j}$, so we can conclude that 
  \[h_j\in \quadraticmoduleg{\bfr_j+\mathbf 2}{\mathbf g_{K_j}}{J_j},\]
  which is equivalent to the conclusion of the theorem.
  
 Thus we need to prove \eqref{eq:fhatpreordering}. Let us explain why, in order to obtain this conclusion %
 we just need to show that
 \begin{multline}\label{eq:needtoprove}
  \frac{\eta}2=\frac{\varepsilon}{4(\ell+2)} \geq (\max_{[-1,1]^n} \f_{j}-\min_{[-1,1]^n} \f_{j}) \;2|J_j|\pi^2\left(\prod_{i\in J_j}((\fulldeg F_j)_i+1)\right)\\
  \times\max_{I=(i_1,\dots,i_n)\in \indexset{F_j}}\left(2^{\hamming(I)/2}\max_{1\leq k\leq n}\frac{i_k^2}{(r_{j,k}+2)^2}\right).
 \end{multline}
 Observe that \eqref{eq:Fjbelongs} is equivalent to 
 \begin{equation}\label{eq:equivform}
  \f_j-\min_{[-1,1]^n}\f_j+\epsilon_j(\max_{[-1,1]^n}\f_j-\min_{[-1,1]^n}\f_j)\in \preordering{\bfr_j }{J_j}
 \end{equation}
 If \eqref{eq:needtoprove} were true, we would then have
\begin{align*}
  \f_j&\geq \f_j-\min_{[-1,1]^n}\f_j+\frac{\varepsilon}{4(\ell+2)}\\
  &\geq \f_j-\min_{[-1,1]^n}\f_j
  +  (\max_{[-1,1]^n} \f_{j}-\min_{[-1,1]^n} \f_{j}) \;2|J_j|\pi^2\left(\prod_{i\in J_j}((\fulldeg F_j)_i+1)\right)
  \\
  &\hspace{4cm}\times
  \max_{I=(i_1,\dots,i_n)\in \indexset{F_j}}\left(2^{\hamming(I)/2}\max_{1\leq k\leq n}\frac{i_k^2}{(r_{j,k}+2)^2}\right).
\end{align*}
So in view of \eqref{eq:belongingcondition} and \eqref{eq:equivform}, we would indeed have $\f_j\in \preordering{\bfr_j}{J_j}$, which is~\eqref{eq:fhatpreordering}.

Let us now collect some preliminary estimates that will help us to prove \eqref{eq:needtoprove}.  
  For $I\in \indexset{F_j}$ we have $\hamming(I)\leq |J_j|$ so we  estimate
 \begin{equation}\label{eq:hammingestimate}
  2^{\hamming(I)/2}\leq 2^{|J_j|/2}.
 \end{equation}
 We  also estimate 
 \begin{equation}\label{eq:maxisqestimate}
  \frac{i^2_k}{(r_{j,k}+2)^2}\leq \frac{\left(\fulldeg \f_{j}\right)^2_k}{\min_{1\leq l\leq n}(r_{j,l}+2)^2}.
 \end{equation}

 Now we will estimate $ \max_{[-1,1]^n} \f_{j}-\min_{[-1,1]^n} \f_{j}$ from above.
 Using Lemma \ref{lem:bmproperties}\ref{it:minf} and \ref{it:sizef}, we get 
 \begin{align}
  \notag \max_{[-1,1]^n} \f_{j}&-\min_{[-1,1]^n} \f_{j}\leq \Cf\|h_j\|_\infty2^{3\lojL_j}\cardg\lojC_j(\deg h_j)^{2\lojL_j}\left(\frac{\min_{x\in \domain}h_j(x)}{\|h_j\|_\infty}\right)^{-\lojL_j}-\frac{\varepsilon}{4(\ell+2)}
  \end{align}
 Use  \eqref{eq:starstar}, \eqref{eq:degh} and \eqref{eq:boundhj} to see that this is
  \begin{align}
  \notag  \max_{[-1,1]^n} \f_{j}&-\min_{[-1,1]^n} \f_{j}\\
  &\leq \Cf\left(3^\ell\sum_i\|p_i\|_\infty \right)2^{3\lojL_j}\cardg\lojC_j\max(\deg p_j,|\bar D_{j,\ell}|,\dots,|\bar D_{\ell,\ell}|)^{2\lojL_j}\notag\\
  & \qquad \cdot \left(\frac{\varepsilon}{2(\ell+2)3^\ell\sum_i\|p_i\|_\infty}\right)^{-\lojL_j}\notag \\
   &= \Cf\left(3^\ell\sum_i\|p_i\|_\infty \right)2^{3\lojL_j}\cardg\lojC_j\max(\deg p_j,|\interset_j| D_{j,\ell},\dots,|\interset_\ell| D_{\ell,\ell})^{2\lojL_j}\notag\\
   & \qquad \cdot \left(\frac{\varepsilon}{2(\ell+2)3^\ell\sum_i\|p_i\|_\infty}\right)^{-\lojL_j}
  \label{eq:uppersum}
 \end{align}
 For the last line, we have used the definition of $\bar D_{l,m}$ as in Lemma \ref{lem:approx}.

 Additionally, we obtain the following estimate %
 \begin{align}
 \fulldeg \f_{j}\leq  &\notag\max\left(\fulldeg h_j,\fulldeg \hat q_{j}%
 \right)\\
  \notag  &\leq \max\left(\fulldeg p_j,\bar D_{j,\ell},\dots,\bar D_{\ell,\ell},(2m_j+1)\max_{k\in K_j}\fulldeg g_k\right)\\
  \notag  &\leq \max\Big(\fulldeg p_j,\bar D_{j,\ell},\dots,\bar D_{\ell,\ell},\\
   \notag &\qquad(2\left(\Cm\lojC_j^{\frac34}\cardg^{-\frac13}2^{4\lojL_j}(
   \deg h_j%
   )^{\frac{8\lojL_j}3}\left(\frac{\min_{x\in \domain}h_j(x)}{\|h_j\|_\infty}\right)^{-\frac{4\lojL_j+1}3}\right)+1)\max_{k\in K_j}\fulldeg g_k\Big) \\
  \notag& \leq \max\Big(\fulldeg p_j,\bar D_{j,\ell},\dots,\bar D_{\ell,\ell},\\
  &\qquad \left(\Cm 2^{4\lojL_j+1}\lojC^{\frac34}\cardg^{-\frac13}(\deg h_j)^{\frac{8\lojL_j}{3}}\left(\frac{3^\ell\sum_i\|p_i\|_\infty}{\varepsilon/2(\ell+2)}\right)^{\frac{4\lojL_j+1}3}+1\right)\max_{k\in K_j} \fulldeg g_k\Big).
  \label{eq:firstdegestimate} 
 \end{align}
The first inequality comes from  \eqref{eq:deff}, 
the second one from \eqref{eq:degh} and Lemma \ref{lem:bmproperties}\ref{it:degQ}, 
the third one from Lemma 
\ref{lem:bmproperties}\ref{it:asymptm}, 
and the last one from \eqref{eq:starstar}, \eqref{eq:boundhj}, and \eqref{eq:degh}.
Compare with Lemma \ref{lem:bmproperties}\ref{it:degf}.

With those estimates under our belt, we now turn to showing that \eqref{eq:needtoprove} is true.  Using \eqref{eq:uppersum}, \eqref{eq:hammingestimate}, \eqref{eq:maxisqestimate}, as well as $\fulldeg F_j\leq \fulldeg \f_j$, we can start to estimate the right-hand side of \eqref{eq:needtoprove} by
 \begin{align*}
  &(\max_{[-1,1]^n} \f_{j}-\min_{[-1,1]^n} \f_{j}) \;2|J_j|\pi^2\left(\prod_{i\in J_j}((\fulldeg F_j)_i+1)\right)\\
  &\qquad\qquad\qquad\qquad\qquad\times\max_{I=(i_1,\dots,i_n)\in \indexset{F_j}}\left(2^{\hamming(I)/2}\max_{1\leq k\leq n}\frac{i_k^2}{(r_{j,k}+2)^2}\right) \\
  &\leq \Cf\left(3^\ell\sum_i\|p_i\|_\infty \right)2^{3\lojL_j}\cardg\lojC_j\max(\deg p_j,|\interset_j| D_{j,\ell},\dots,|\interset_\ell| D_{\ell,\ell})^{2\lojL_j}\left(\frac{\varepsilon}{2(\ell+2)3^\ell\sum_i\|p_i\|_\infty}\right)^{-\lojL_j}\\
  &\qquad\qquad\qquad \times 2|J_j|\pi^2\left(\max_{1\leq i\leq n}(\fulldeg \f_{j})_i+1\right)^{|J_j|}\frac{2^{|J_j|/2}\max_{1\leq i\leq n}\left(\fulldeg \f_{j}\right)^2_{i}}{\min_{1\leq k\leq n}(r_{j,k}+2)^2}.
  \end{align*}
   Next we  denote
   \[m_{j,\ell}=\max(\deg p_j,|\interset_j|D_{j,\ell},\dots,|\interset_\ell|D_{\ell,\ell})\]
   and we reorganize and consolidate the terms and then we use \eqref{eq:firstdegestimate} to see that this is
  \begin{align*}
    &\leq\varepsilon^{-\lojL_j} \Cf |J_j|\pi^2\left(3^\ell\sum_i\|p_i\|_\infty \right)^{\lojL_j+1}2^{4\lojL_j+\frac{|J_j|}2+1}(\ell+2)^{\lojL_j}\cardg\lojC_jm_{j,\ell}^{2\lojL_j}\\%
  &\qquad\qquad\qquad \times \left(\max_{1\leq i\leq n}(\fulldeg f_{j})_i+1\right)^{|J_j|+2}\frac{1}{\min_{1\leq k\leq n}(r_{j,k}+2)^2}\\
  &\leq \varepsilon^{-\lojL_j} \Cf |J_j|\pi^2\left(3^\ell\sum_i\|p_i\|_\infty \right)^{\lojL_j+1}2^{4\lojL_j+\frac{|J_j|}2+1}(\ell+2)^{\lojL_j}\cardg\lojC_jm_{j,\ell}^{2\lojL_j}\\%
  &\qquad\qquad\qquad \times \Big(\max\Big(\fulldeg p_j,\bar D_{j,\ell},\dots,\bar D_{\ell,\ell},\\
  &\qquad\qquad\qquad\qquad\quad\left(\Cm 2^{4\lojL_j+1}\lojC^{\frac34}\cardg^{-\frac13}
  (
  \deg h_j%
  )^{\frac{8\lojL_j}{3}}\left(\frac{3^\ell\sum_i\|p_i\|_\infty}{\varepsilon/2(\ell+2)}\right)^{\frac{4\lojL_j+1}3}+1\right)\\
  &\qquad\qquad\qquad\qquad\times\max_{k\in K_j}\fulldeg g_k\Big) +1\Big)^{|J_j|+2}\frac{1}{\min_{1\leq k\leq n}(r_{j,k}+2)^2}\\
    &\leq \varepsilon^{-\lojL_j} \Cf |J_j|\pi^2\left(3^\ell\sum_i\|p_i\|_\infty \right)^{\lojL_j+1}2^{4\lojL_j+\frac{|J_j|}2+1}(\ell+2)^{\lojL_j}\cardg\lojC_jm_{j,\ell}^{2\lojL_j}\\%
  &\qquad \times \Big(\max\Big(\fulldeg p_j,\bar D_{j,\ell},\dots,\bar D_{\ell,\ell},\\
  &\qquad\qquad\quad\left(\Cm 2^{4\lojL_j+1}\lojC^{\frac34}\cardg^{-\frac13}
  m_{j,\ell}^{\frac{8\lojL_j}{3}}\left(\frac{3^\ell\sum_i\|p_i\|_\infty}{\varepsilon/2(\ell+2)}\right)^{\frac{4\lojL_j+1}3}+1\right)\\
  &\qquad\qquad\times\max_{k\in K_j}\fulldeg g_k\Big) +1\Big)^{|J_j|+2}\frac{1}{\min_{1\leq k\leq n}(r_{j,k}+2)^2} \,.
 \end{align*}
 Now use \eqref{eq:defD} as well as $\varepsilon_i-2\eta_i=\frac{\ell+6}{2\ell(\ell+2)}\varepsilon$ to see that the above is bounded by
 \begin{align*}
  &\varepsilon^{-\lojL_j} \Cf |J_j|\pi^2\left(3^\ell\sum_i\|p_i\|_\infty \right)^{\lojL_j+1}2^{4\lojL_j+\frac{|J_j|}2+1}(\ell+2)^{\lojL_j}\cardg\lojC_j\\
  &\qquad\times\Big(\max\Big[\deg p_j,|\interset_j|^2 \frac{2\ell(\ell+2)}{\ell+6}\frac{2\Cjackson\sum_{k=j}^\ell\lip \f_k}{\varepsilon}+1,\dots,\\
  &\qquad\qquad\qquad\qquad|\interset_\ell|^2 \frac{2\ell(\ell+2)}{\ell+6}\frac{2\Cjackson\lip \f_\ell}{\varepsilon}+1,\\%
  &\qquad\qquad\qquad\qquad\quad\left(\Cm 2^{4\lojL_j+1}\lojC^{\frac34}\cardg^{-\frac13}%
  \left(\frac{3^\ell\sum_i\|p_i\|_\infty}{\varepsilon/2(\ell+2)}\right)^{\frac{4\lojL_j+1}3}+1\right)\max_{k\in K_j}\fulldeg g_k\Big]\\
  &\qquad\qquad\qquad\qquad+1\Big)^{(2\lojL_j+|J_j|+2)(1+\frac{8\lojL_j}3)}\frac{1}{\min_{1\leq k\leq n}(r_{j,k}+2)^2}\\
  &\leq \varepsilon^{-\lojL_j-\frac{4\lojL_j+1}3(2\lojL_j+|J_j|+2)(1+\frac{8\lojL_j}3)} \Cf(\Cjackson\Cm)^{(2\lojL_j+|J_j|+2)(1+\frac{8\lojL_j}3)} |J_j|\pi^2\left(3^\ell\sum_i\|p_i\|_\infty \right)^{\lojL_j+1}\\
  &\qquad \times 2^{4\lojL_j+\frac{|J_j|}2+1+(1+\frac{4\lojL_j+1}3)(2\lojL_j+|J_j|+2)(1+\frac{8\lojL_j}3)
  }(\ell+2)^{1+\lojL_j+\frac{4\lojL_j+1}3(2\lojL_j+|J_j|+2)(1+\frac{8\lojL_j}3)}\cardg\\
  &\qquad\times\lojC_j^{1+\frac34(2\lojL_j+|J_j|+2)(1+\frac{8\lojL_j}3)}\left(\sum_{i=j}^\ell|\interset_i|^{2(2\lojL_j+|J_j|+2)(1+\frac{8\lojL_j}3)}\right)\\
  &\qquad\times (3\deg p_j\sum_{i=1}^\ell\lip p_i)^{(2\lojL_j+|J_j|+2)(1+\frac{8\lojL_j}3)}\\
  &\qquad \times%
  (\max_{k\in K_j}\deg g_k+1)^{(2\lojL_j+|J_j|+2)(1+\frac{8\lojL_j}3)}\frac{1}{\min_{1\leq k\leq n}(r_{j,k}+2)^2}.
  \end{align*}
  Finally use  \eqref{eq:Callcond2} and then  \eqref{eq:putinarcond1} to get that the above is less  than
  \begin{multline*}
  \Call \frac{\varepsilon^{-\lojL_j-\frac{4\lojL_j+1}3(2\lojL_j+|J_j|+2)(1+\frac{8\lojL_j}3)}\left(\sum_i\|p_i\|_\infty \right)^{\lojL_j+1}(\deg p_j\sum_{i=1}^\ell\lip p_i)^{(2\lojL_j+|J_j|+2)(1+\frac{8\lojL_j}3)}}{\min_{1\leq k\leq n}(r_{j,k}+2)^2}\\
  \leq \frac{\varepsilon}{4(\ell+2)}.
 \end{multline*}
  This shows that \eqref{eq:needtoprove} holds, and hence also \eqref{eq:fhatpreordering}, which proves the theorem.
\end{proof}

\begin{lemma}[{\cite[Lemma 3.8]{baldi2021moment}}]\label{lem:archimedean}
 Let $\J\subset\{1,\dots,n\}$, and let $\bfr=(r_1,\dots,r_n)$ be a multi-index such that $r_i>0$ only if $i\in \J$.
 The quadratic module $\quadraticmoduleg{\bfr+\twos}{1-\sum_{i\in \J}x_i^2}{J}$ contains the preordering $\preordering{\bfr}{\J}$,
 \[\preordering{\bfr}{\J}\subseteq \quadraticmoduleg{\bfr+\twos}{1-\textstyle\sum_{i\in \J}x_i^2}{J}.\]
\end{lemma}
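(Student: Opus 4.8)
The plan is to mimic the proof of \cite[Lem.~3.8]{baldi2021moment}, whose engine is the elementary identity $1-x_k^2=\big(1-\sum_{i\in\J}x_i^2\big)+\sum_{i\in\J\setminus\{k\}}x_i^2$, while keeping track of the degree in each variable separately. Write $c\coloneqq1-\sum_{i\in\J}x_i^2$, and for a set $A\subseteq\J$ let $\mathbf 2_{A}$ (resp.\ $\mathbf 1_{A}$) be the multi-index equal to $2$ (resp.\ $1$) on the coordinates in $A$ and $0$ elsewhere; note $\fulldeg c=\mathbf 2_{\J}$. First I would record two soft facts about the truncated quadratic module $\quadraticmoduleg{\bfr}{c}{\J}=\{\sigma_0+\sigma_1c:\sigma_0,\sigma_1\in\sos{\J},\ \fulldeg\sigma_0\le\bfr,\ \fulldeg(\sigma_1c)\le\bfr\}$: it is closed under sums, and it \emph{absorbs} squares with additive degree bookkeeping, i.e.\ $\sigma\in\sos{\J}$ and $p\in\quadraticmoduleg{\bfr}{c}{\J}$ imply $\sigma p\in\quadraticmoduleg{\fulldeg\sigma+\bfr}{c}{\J}$, since a product of sums of squares is a sum of squares and $\fulldeg(q_1q_2)=\fulldeg q_1+\fulldeg q_2$ for nonzero $q_1,q_2$ (the ring $\R[x]$ being an integral domain, top-degree coefficients never cancel). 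A generic element of $\preordering{\bfr}{\J}$ is a finite sum of terms $\sigma_K\prod_{i\in K}(1-x_i^2)$ with $K\subseteq\J$, $\sigma_K\in\sos{\J}$, and $\fulldeg\big(\sigma_K\prod_{i\in K}(1-x_i^2)\big)\le\bfr$; since $\fulldeg\prod_{i\in K}(1-x_i^2)=2\,\mathbf 1_K$, the product rule gives $\fulldeg\sigma_K\le\bfr-2\,\mathbf 1_K$ (terms with $\sigma_K=0$ being trivial). Thus, by additivity and absorption, the whole lemma reduces to the single claim
\[
 \prod_{i\in K}(1-x_i^2)\in\quadraticmoduleg{\mathbf 2_{\J}+2\,\mathbf 1_K}{c}{\J}\qquad\text{for every }K\subseteq\J,
\]
because then $\sigma_K\prod_{i\in K}(1-x_i^2)\in\quadraticmoduleg{\fulldeg\sigma_K+\mathbf 2_{\J}+2\,\mathbf 1_K}{c}{\J}\subseteq\quadraticmoduleg{\bfr+\mathbf 2_{\J}}{c}{\J}\subseteq\quadraticmoduleg{\bfr+\twos}{c}{\J}$, and one sums over $K$.

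To prove the claim I would induct on $m=|K|$ ($K=\emptyset$ being trivial). Fixing an ordering $K=\{k_1,\dots,k_m\}$ and setting $a_i\coloneqq x_{k_i}^2$, a telescoping computation on the partial products $P_j\coloneqq\prod_{i\le j}(1-a_i)$ (using $P_j-P_{j-1}=-a_jP_{j-1}$ and $1-P_{j-1}=\sum_{l<j}a_lP_{l-1}$) produces
\[
 \prod_{i=1}^m(1-a_i)=1-\sum_{i=1}^m a_i+\sum_{l=1}^{m-1}a_l\Big(\sum_{j=l+1}^m a_j\Big)P_{l-1}.
\]
Substituting $1=c+\sum_{i\in\J}x_i^2$ in the constant term, the part $\sum_{i\in K}x_i^2$ cancels the linear term and there remains $b_K\coloneqq\sum_{i\in\J\setminus K}x_i^2$; with $u_l\coloneqq\sum_{j>l}x_{k_j}^2$ and $K_{<l}\coloneqq\{k_1,\dots,k_{l-1}\}$ this becomes
\[
 \prod_{i\in K}(1-x_i^2)=c+b_K+\sum_{l=1}^{m-1}x_{k_l}^2\,u_l\prod_{i\in K_{<l}}(1-x_i^2).
\]
Replacing each inner product $\prod_{i\in K_{<l}}(1-x_i^2)$ by its representation $\tau_0^{(l)}+\tau_1^{(l)}c$ from the induction hypothesis (with $\fulldeg\tau_0^{(l)}\le\mathbf 2_{\J}+2\,\mathbf 1_{K_{<l}}$ and $\fulldeg\tau_1^{(l)}\le2\,\mathbf 1_{K_{<l}}$) and collecting terms yields $\prod_{i\in K}(1-x_i^2)=\tau_0+\tau_1c$ with $\tau_0=b_K+\sum_l x_{k_l}^2u_l\tau_0^{(l)}$ and $\tau_1=1+\sum_l x_{k_l}^2u_l\tau_1^{(l)}$, both in $\sos{\J}$ (squares times sums of squares).

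The hard part --- and the reason the ordering of $K$ has to be fixed --- will be the degree estimate: one must check that multiplying $\tau_0^{(l)}$, which is allowed to reach degree $4$ only on the coordinates of $K_{<l}$, by the factor $x_{k_l}^2u_l$, which involves only the coordinates $k_l,\dots,k_m$ \emph{disjoint} from $K_{<l}$, does not push any variable past the budget $\mathbf 2_{\J}+2\,\mathbf 1_K$. Coordinate by coordinate one finds $\fulldeg(x_{k_l}^2u_l\tau_0^{(l)})\le 0+4=4$ on $K_{<l}$, $\le2+2=4$ on $\{k_l,\dots,k_m\}\subseteq K$ (these lie in $\J\setminus K_{<l}$, where $\tau_0^{(l)}$ has degree $\le2$), $\le2$ on $\J\setminus K$, and $0$ off $\J$; combined with $\fulldeg b_K\le\mathbf 2_{\J}$ this gives $\fulldeg\tau_0\le\mathbf 2_{\J}+2\,\mathbf 1_K$, and the same disjointness gives $\fulldeg\tau_1\le2\,\mathbf 1_K$, so $\fulldeg(\tau_1c)\le\mathbf 2_{\J}+2\,\mathbf 1_K$ as well, closing the induction. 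The telescoping rearrangement is essential precisely here: the naive induction that simply peels one factor $(1-x_{k_m}^2)$ off $\prod_{i\in K}(1-x_i^2)$ and multiplies through fails because it re-multiplies variables that are already at degree $4$, whereas the rearrangement above keeps each ``expensive'' block of coordinates out of every later multiplication.
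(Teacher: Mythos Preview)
Your proof is correct and in fact considerably more complete than the paper's. The paper only records the single-factor identities
\[
 1\pm x_i=\tfrac12\Big(c+\sum_{j\in\J,\,j\ne i}x_j^2+(1\pm x_i)^2\Big)
 \quad\text{and}\quad
 1-x_i^2=\tfrac14c(\cdots)+\tfrac14c^2+\cdots,
\]
and then asserts the degree increase is $\twos$ because ``$\deg(1-x_i^2)=2$ while the degree of the right-hand side is $4$''. This leaves the case $|K|\ge 2$ unexplained, and --- as you yourself observe --- the naive route of multiplying the single-factor representations (or peeling off one factor $(1-x_{k_m}^2)$ at a time) re-multiplies variables already sitting at degree $4$ and yields a degree increment of order $|K|\cdot\mathbf 2_\J$ rather than $\twos$.

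Your telescoping identity
\[
 \prod_{i=1}^m(1-a_i)=1-\sum_{i=1}^m a_i+\sum_{l=1}^{m-1}a_l\Big(\sum_{j>l}a_j\Big)P_{l-1},
\]
combined with the substitution $1=c+\sum_{i\in\J}x_i^2$, is precisely the missing ingredient: it arranges that in each summand the variables $x_{k_1},\dots,x_{k_{l-1}}$ on which $P_{l-1}=\tau_0^{(l)}+\tau_1^{(l)}c$ may already carry degree $4$ are disjoint from the new multiplier $x_{k_l}^2u_l$, so no coordinate ever exceeds the budget $\mathbf 2_\J+2\,\mathbf 1_K$. Both arguments share the kernel identity $1-x_k^2=c+\sum_{j\in\J\setminus\{k\}}x_j^2$ (your base case), but your induction with the disjoint-support bookkeeping is a genuine addition; the paper's sketch, read literally, does not establish the stated $\fulldeg$ bound for $|K|>1$.
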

\begin{proof}
This follows from
\[1\pm x_i=\frac12(1-x_i^2+(1\pm x_i)^2)=\frac12((1-\|x\|^2)+\sum_{\substack{j\in\J\\j\neq i}}x_j^2+(1\pm x_i)^2)\]
and
\begin{align*}
 1-x_i^2&=(1-x_i)(1+x_i)\\
 &=\frac14(1-\|x\|_\J^2)\left(2\sum_{j\neq i}x_j^2+(1-x_i)^2+(1+x_i)^2\right)\\
 &\qquad +\frac14(1-\|x\|_\J^2)^2+\left(\sum_{j\neq i}x_j^2\right)^2+\sum_{j\neq i}x_j^2((1-x_i)^2+(1+x_i)^2).
\end{align*}
The increase of $\twos$ in $\bfr$ stems from the fact that $\deg(1-x_i^2)=2$ while the degree of the right-hand side above is 4.
\end{proof}

\subsection{An asymptotic lemma}

\begin{lemma}\label{lem:binomasympt}
 For $a,b,c,d,p,q>0$, with $cq-ap\neq 0$,
 \[\limsup_{\varepsilon\searrow0}\Bigg|\frac{\displaystyle\log\left[\binom{a+b\varepsilon^{-p}}{b\varepsilon^{-p}}\middle/\binom{c+d\varepsilon^{-q}}{d\varepsilon^{-q}}\right]}{(cq-ap)\log\varepsilon}\Bigg|<+\infty.
\]
\end{lemma}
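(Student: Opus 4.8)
The plan is to reduce everything to the elementary asymptotic that $\log\binom{a+N}{a}$ equals $a\log N$ up to a quantity bounded in $N$, and then substitute $N=b\varepsilon^{-p}$ and $N=d\varepsilon^{-q}$.

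First I would put the binomial into a convenient shape: using the symmetry $\binom{a+N}{N}=\binom{a+N}{a}$, valid for the Gamma-function extension $\binom{x}{y}=\Gamma(x+1)/\bigl(\Gamma(y+1)\Gamma(x-y+1)\bigr)$, one has $\binom{a+N}{N}=\Gamma(a+N+1)/\bigl(\Gamma(a+1)\Gamma(N+1)\bigr)$. By the classical relation $\Gamma(z+a)/\bigl(z^{a}\Gamma(z)\bigr)\to1$ as $z\to+\infty$ (a consequence of Stirling's formula; when $a\in\N$ one can instead simply expand $\binom{a+N}{a}$ as a monic degree-$a$ polynomial in $N$ over $a!$), applied with $z=N+1$, this yields $\binom{a+N}{N}\sim N^{a}/\Gamma(a+1)$, and hence the function
\[
g_a(N):=\log\binom{a+N}{N}-a\log N
\]
converges to $-\log\Gamma(a+1)$ as $N\to+\infty$. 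Since $g_a$ is continuous on $(0,+\infty)$ and has a finite limit at $+\infty$, it is bounded on $[1,+\infty)$.

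Next I would substitute. For $\varepsilon>0$ small enough that $b\varepsilon^{-p}\ge1$ and $d\varepsilon^{-q}\ge1$ (possible since $b,d,p,q>0$), write
\[
\log\binom{a+b\varepsilon^{-p}}{b\varepsilon^{-p}}=a\log\!\bigl(b\varepsilon^{-p}\bigr)+g_a\!\bigl(b\varepsilon^{-p}\bigr)=-ap\log\varepsilon+a\log b+g_a\!\bigl(b\varepsilon^{-p}\bigr),
\]
and likewise for $\binom{c+d\varepsilon^{-q}}{d\varepsilon^{-q}}$ with $(c,d,q)$ in place of $(a,b,p)$. Subtracting the two identities gives
\[
\log\left[\binom{a+b\varepsilon^{-p}}{b\varepsilon^{-p}}\middle/\binom{c+d\varepsilon^{-q}}{d\varepsilon^{-q}}\right]=(cq-ap)\log\varepsilon+R(\varepsilon),
\]
where $R(\varepsilon)=a\log b-c\log d+g_a(b\varepsilon^{-p})-g_c(d\varepsilon^{-q})$ stays bounded as $\varepsilon\searrow0$, by the boundedness of $g_a$ and $g_c$ on $[1,+\infty)$. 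Dividing through by $(cq-ap)\log\varepsilon$, which is permitted since $cq-ap\ne0$ and $\log\varepsilon\to-\infty$, turns the displayed ratio into $1+R(\varepsilon)/\bigl((cq-ap)\log\varepsilon\bigr)$, whose absolute value tends to $1$. Hence the $\limsup$ equals $1$, which is finite, and the lemma follows.

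I do not anticipate a genuine obstacle here. The only steps requiring a little care are the Stirling-type estimate $\Gamma(z+a)/(z^{a}\Gamma(z))\to1$ in the non-integer case (for the integer case used in Propositions~\ref{prop:asympt1} and~\ref{prop:asympt2} it is purely algebraic) and checking that the remainder $R(\varepsilon)$ is bounded uniformly on a punctured right-neighbourhood of $0$; both follow directly from the continuity and convergence of $g_a$ and $g_c$ set up in the first step.
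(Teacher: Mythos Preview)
Your proof is correct, and your route is genuinely different from the paper's. The paper applies the crude Stirling bound $|\log(n!)-n\log n+n|\le C'\log n$ to all six factorials in the two binomial coefficients, expands everything, cancels the large $N\log N$ terms by hand, and then finishes with a l'H\^opital computation to evaluate the remaining limit of the form $\bigl(a\log(a+b\varepsilon^{-p})-c\log(c+d\varepsilon^{-q})\bigr)/\bigl((cq-ap)\log\varepsilon\bigr)$. You instead isolate the single asymptotic $\binom{a+N}{a}\sim N^{a}/\Gamma(a+1)$ (equivalently $\Gamma(z+a)/\Gamma(z)\sim z^{a}$), which packages all of those cancellations into the bounded function $g_a$; this makes the argument markedly shorter and avoids l'H\^opital entirely. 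Your version also handles non-integer $a,c$ transparently via the Gamma function, whereas the paper's factorial notation tacitly assumes integer parameters (which is all that is needed for the applications in Propositions~\ref{prop:asympt1} and~\ref{prop:asympt2}). Both arguments in fact show that the limit, not just the $\limsup$, equals $1$.
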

\begin{proof}
 Using the Stirling approximation, which states that there exist $C'\geq 0$ such that
 \[|\log(n!)-n\log n+n|\leq C'\log n\]
 for all large $n>0$, we get that,
\begin{align}
     \notag &\limsup_{\varepsilon\searrow0}\left|\frac{\log\left[\binom{a+b\varepsilon^{-p}}{b\varepsilon^{-p}}\middle/\binom{c+d\varepsilon^{-q}}{d\varepsilon^{-q}}\right]}{(cq-ap)\log\varepsilon}\right|\\
    \notag  &\quad=\limsup_{\varepsilon\searrow0}\left|\frac{\log\left[\frac{(a+b\varepsilon^{-p})!}{a!(b\varepsilon^{-p})!}\middle/\frac{(c+d\varepsilon^{-q})!}{c!(d\varepsilon^{-q})!}\right]}{(cq-ap)\log\varepsilon}\right|\\
     \notag &\quad=\limsup_{\varepsilon\searrow0}\left|\frac{\log(a+b\varepsilon^{-p})!-\log a!-\log (b\varepsilon^{-p})!-\log(c+d\varepsilon^{-q})!+\log c!+\log(d\varepsilon^{-q})!}{(cq-ap)\log\varepsilon}\right|\\
     \notag &\quad=\limsup_{\varepsilon\searrow0}\left|\frac{\log(a+b\varepsilon^{-p})!-\log (b\varepsilon^{-p})!-\log(c+d\varepsilon^{-q})!+\log(d\varepsilon^{-q})!}{(cq-ap)\log\varepsilon}\right|\\
     \label{eq:importantpart1}&\quad=\limsup_{\varepsilon\searrow0}\frac{1}{|cq-ap||\log\varepsilon|}\Big|(a+b\varepsilon^{-p})\log(a+b\varepsilon^{-p})%
 -b\varepsilon^{-p}\log(b\varepsilon^{-p})\\
   \label{eq:importantpart2}  &\quad\qquad-(c+d\varepsilon^{-q})\log(c+d\varepsilon^{-q})+d\varepsilon^{-q}\log(d\varepsilon^{-q})\\
    \label{eq:unimportantpart1}   &\quad\qquad-\log a!+\log c!-(a+b\varepsilon^{-p})+(b\varepsilon^{-p})+(c+d\varepsilon^{-q})-(d\varepsilon^{-q}))\\
     \label{eq:unimportantpart2} &\quad\qquad+O\big(|\log(a+b\varepsilon^{-p})|+|\log(b\varepsilon^{-p})|+|\log(c+d\varepsilon^{-q})|+|\log(d\varepsilon^{-q})|\big)\Big|
\end{align}
Notice that the terms in \eqref{eq:unimportantpart1} are asymptotically much smaller than the denominator,  which tends to $+\infty$. 
The last line \eqref{eq:unimportantpart2} is, as $\varepsilon\searrow0$, of the order of
\begin{align*}
&\frac{O(|\log(a+b\varepsilon^{-p})|+|\log(b\varepsilon^{-p})|+|\log(c+d\varepsilon^{-q})|+|\log(d\varepsilon^{-q})|)}{|cq-ap||\log \varepsilon|}\\
&=\frac{O(2|\log b|+2p|\log \varepsilon|+2|\log d|+2q|\log \varepsilon|)}{|cq-ap||\log \varepsilon|}\\
&=\frac{(p+q)O(|\log \varepsilon|)}{|cq-ap||\log \varepsilon|}=O\left(\frac{p+q}{|cq-ap|}\right)=O(1).
\end{align*}
Let us now show that the remaining two lines \eqref{eq:importantpart1}--\eqref{eq:importantpart2} tend to 1 in the limit.
Now, 
\[\lim_{\varepsilon\searrow0} b\varepsilon^{-p}\left(\log(a+b\varepsilon^{-p})-\log(b\varepsilon^{-p})\right)=a\quad\textrm{and}\quad
\lim_{\varepsilon\searrow0} d\varepsilon^{-p}\left(\log(c+d\varepsilon^{-p})-\log(d\varepsilon^{-p})\right)=c,\]
so we get 
\begin{align*}
&\lim_{\varepsilon\searrow0}\frac{1}{(cq-ap)\log\varepsilon}\Big((a+b\varepsilon^{-p})\log(a+b\varepsilon^{-p})-a\log a-b\varepsilon^{-p}\log(b\varepsilon^{-p})\\
    &\quad\qquad-(c+d\varepsilon^{-q})\log(c+d\varepsilon^{-q})+c\log c+d\varepsilon^{-q}\log(d\varepsilon^{-q})\Big)\\
    &\quad=
    \lim_{\varepsilon\searrow0}\frac{-a\log a+c\log c+a-c+a\log(a+b\varepsilon^{-p})-c\log(c+d\varepsilon^{-q})}{(cq-ap)\log\varepsilon}\\
    &\quad=\lim_{\varepsilon\searrow0}\frac{a\log(a+b\varepsilon^{-p})-c\log(c+d\varepsilon^{-q})}{(cq-ap)\log\varepsilon}.
\end{align*}
In this quotient, both the numerator and the denominator tend to $\pm\infty$, so we can apply a version of the l'H\^opital rule, which states that, if the limit of the quotient of their derivatives exists, then the original limit above equals that limit. Taking the limit of the quotient of the derivatives gives
\begin{align*}
    &\lim_{\varepsilon\searrow0}\frac{\frac{cdq\varepsilon^{-1-q}}{c+d\varepsilon^{-q}}-\frac{abp\varepsilon^{-1-p}}{a+b\varepsilon^{-p}}}{(cq-ap)/\varepsilon}\\
    &\quad=\lim_{\varepsilon\searrow0}
        \frac{cdq\varepsilon^{-q}(a+b\varepsilon^{-p})-abp\varepsilon^{-p}(c+d\varepsilon^{-q})}{(cq-ap)(a+b\varepsilon^{-p})(c+d\varepsilon^{-q})}\\
    &\quad=\lim_{\varepsilon\searrow0}
        \frac{cdq(a\varepsilon^{p}+b)-abp(c\varepsilon^{q}+d)}{(cq-ap)(a\varepsilon^{q}+b)(c\varepsilon^{p}+d)}\\    
    &\quad=\lim_{\varepsilon\searrow0}
        \frac{cdqb-abpd}{(cq-ap)bd}=1.    \qedhere
\end{align*}
\end{proof}


\begin{thebibliography}{10}

\bibitem{baldi2021moment}
Lorenzo Baldi and Bernard Mourrain.
\newblock {On the effective Putinar’s Positivstellensatz and moment
  approximation}.
\newblock {\em Mathematical Programming}, pages 1--33, 2022.

\bibitem{doherty2012convergence}
Andrew~C Doherty and Stephanie Wehner.
\newblock Convergence of {SDP} hierarchies for polynomial optimization on the
  hypersphere.
\newblock {\em arXiv preprint arXiv:1210.5048}, 2012.

\bibitem{fang2021sum}
Kun Fang and Hamza Fawzi.
\newblock The sum-of-squares hierarchy on the sphere and applications in
  quantum information theory.
\newblock {\em Mathematical Programming}, 190(1):331--360, 2021.

\bibitem{grimm2007sparse}
David Grimm, Tim Netzer, and Markus Schweighofer.
\newblock A note on the representation of positive polynomials with structured
  sparsity.
\newblock {\em Archiv der Mathematik}, 89(5):399--403, 2007.

\bibitem{jackson1930theory}
Dunham Jackson.
\newblock {\em The theory of approximation}, volume~11 of {\em Colloquium
  Publications}.
\newblock American Mathematical Society, 1930.

\bibitem{karlin1953geometry}
Samuel Karlin and Lloyd~S Shapley.
\newblock {\em Geometry of moment spaces}, volume~12.
\newblock American Mathematical Society, 1953.

\bibitem{kirschner2022convergence}
Felix Kirschner and Etienne De~Klerk.
\newblock Convergence rates of {RLT} and {L}asserre-type hierarchies for the
  generalized moment problem over the simplex and the sphere.
\newblock {\em Optimization Letters}, pages 1--18, 2022.

\bibitem{lasserre2001global}
Jean~B Lasserre.
\newblock Global optimization with polynomials and the problem of moments.
\newblock {\em SIAM Journal on optimization}, 11(3):796--817, 2001.

\bibitem{lasserre2006convergent}
Jean~B Lasserre.
\newblock Convergent {SDP}-relaxations in polynomial optimization with
  sparsity.
\newblock {\em SIAM Journal on Optimization}, 17(3):822--843, 2006.

\bibitem{lasserre2009moments}
Jean~Bernard Lasserre.
\newblock {\em Moments, positive polynomials and their applications}, volume~1.
\newblock World Scientific, 2009.

\bibitem{laurent2009sums}
Monique Laurent.
\newblock Sums of squares, moment matrices and optimization over polynomials.
\newblock In {\em Emerging applications of algebraic geometry}, pages 157--270.
  Springer, 2009.

\bibitem{laurent2021effective}
Monique Laurent and Lucas Slot.
\newblock An effective version of {Schm\"udgen's Positivstellensatz} for the
  hypercube.
\newblock {\em Optimization Letters}, 2023.

\bibitem{magron2022sparse}
V.~Magron and J.~Wang.
\newblock Sparse polynomial optimization: theory and practice.
\newblock {\em Series on Optimization and Its Applications, World Scientific
  Press}, 2023.
\newblock Accepted for publication.

\bibitem{mai2022complexity}
Ngoc Hoang~Anh Mai and Victor Magron.
\newblock On the complexity of putinar--vasilescu's positivstellensatz.
\newblock {\em Journal of Complexity}, page 101663, 2022.

\bibitem{mai2022sparse}
Ngoc Hoang~Anh Mai, Victor Magron, and Jean Lasserre.
\newblock A sparse version of {R}eznick's {P}ositivstellensatz.
\newblock {\em Mathematics of Operations Research}, 2022.

\bibitem{nie2007complexity}
Jiawang Nie and Markus Schweighofer.
\newblock On the complexity of {P}utinar's {P}ositivstellensatz.
\newblock {\em Journal of Complexity}, 23(1):135--150, 2007.

\bibitem{parrilo2013approximation}
P.A. Parrilo.
\newblock Approximation quality of {SOS} relaxations.
\newblock Talk at ICCOPT 2013, 2013.

\bibitem{parrilo2003semidefinite}
Pablo~A Parrilo.
\newblock Semidefinite programming relaxations for semialgebraic problems.
\newblock {\em Mathematical programming}, 96(2):293--320, 2003.

\bibitem{putinar1993positive}
Mihai Putinar.
\newblock Positive polynomials on compact semi-algebraic sets.
\newblock {\em Indiana University Mathematics Journal}, 42(3):969--984, 1993.

\bibitem{reznick1995uniform}
Bruce Reznick.
\newblock Uniform denominators in {H}ilbert's seventeenth problem.
\newblock {\em Mathematische Zeitschrift}, 220(1):75--97, 1995.

\bibitem{schmudgen1991thek}
Konrad Schm{\"u}dgen.
\newblock {The K-moment problem for compact semi-algebraic sets}.
\newblock {\em Mathematische Annalen}, 289(1):203--206, 1991.

\bibitem{schweighofer2004complexity}
Markus Schweighofer.
\newblock On the complexity of {S}chm{\"u}dgen's {P}ositivstellensatz.
\newblock {\em Journal of Complexity}, 20(4):529--543, 2004.

\bibitem{slot2022sum}
Lucas Slot.
\newblock Sum-of-squares hierarchies for polynomial optimization and the
  {Christoffel--Darboux} kernel.
\newblock {\em SIAM Journal on Optimization}, 32(4):2612--2635, 2022.

\bibitem{slot2022binary}
Lucas Slot and Monique Laurent.
\newblock Sum-of-squares hierarchies for binary polynomial optimization.
\newblock {\em Mathematical Programming}, pages 1--40, 2022.

\bibitem{weisse2006kernel}
Alexander Wei{\ss}e, Gerhard Wellein, Andreas Alvermann, and Holger Fehske.
\newblock The kernel polynomial method.
\newblock {\em Reviews of modern physics}, 78(1):275, 2006.

\end{thebibliography}
\end{document}